\tikzstyle{every picture} = [>=latex]
\newcommand{\RR}{\mathcal{R}}
\newcommand{\PP}{\mathcal{P}}
\newcommand{\TT}{\mathcal{T}}
\newcommand{\QQ}{\mathcal{Q}}
\newcommand{\fnlab}[1]{f_{\ref{#1}}}
\theoremstyle{plain}
\newtheorem{theorem}{Theorem}[section]
\newtheorem{lemma}[theorem]{Lemma}
\newtheorem{corollary}[theorem]{Corollary}
\newtheorem{proposition}[theorem]{Proposition}
\newtheorem{observation}[theorem]{Observation}
\newtheorem{definition}[theorem]{Definition}
\newtheorem{claim}{Claim}
\newenvironment{subproof}{%
  \begin{proof}[Subproof]%
}{%
  \end{proof}%
}
\def\crg{\mathop{\rm cr}}
\let\sem\setminus
\title{Structure and generation of crossing-critical graphs\thanks{This is an extended version of the paper with the same title presented
at 34th International Symposium on Computational Geometry (SoCG 2018).  Compared to the conference version, we have strengthened the
structural description by showing not only that every crossing-critical graph can be obtained by a sequence of expansions,
but also that the expansions only produce crossing-critical graphs. Furthermore, we have added details to many of the proofs, and implemented a number of
simplifications and corrections.}}
\author{Zden\v ek Dvo\v r\'ak\thanks{Charles University, Prague, Czech Republic.
	\url{rakdver@iuuk.mff.cuni.cz}
	Supported by the Center of Excellence -- Institute for Theoretical Computer Science, Prague, project P202/12/G061 of the Czech Science Foundation.
	The new results for the journal version were obtained with the support of the
	ERC-CZ project LL2328 (Beyond the Four Color Theorem) of the Ministry of Education of Czech Republic.}
\and Petr Hlin\v en\'y\thanks{Faculty of Informatics, Masaryk University, Brno, Czech Republic.
        \url{hlineny@fi.muni.cz}. Supported by the Center of Excellence --
 	Institute for Theoretical Computer Science, Brno, project P202/12/G061 of the Czech Science Foundation.}
\and Bojan Mohar\thanks{Department of Mathematics, Simon Fraser University, Burnaby, BC ~V5A 1S6, Canada.
        \url{mohar@sfu.ca}
	Supported in part by the NSERC Discovery Grant R611450 (Canada), by the Canada Research Chairs program, 
	and by the Research Project J1-8130 of ARRS (Slovenia).}}
\begin{document}

\maketitle

\begin{abstract}
We study $c\,$-crossing-critical graphs, which are 
the minimal graphs that require at least $c$ edge-crossings when drawn in the plane.
For $c=1$ there are only two such graphs without \mbox{degree-$2$} vertices, $K_5$ and $K_{3,3}$,
but for any fixed $c>1$ there exist infinitely many $3$-connected $c\,$-crossing-critical graphs.
It has been previously shown that
$c\,$-crossing-critical graphs have bounded path-width and contain only a bounded
number of internally disjoint paths between any two vertices.  

We expand on these results, providing a more detailed description of the structure of
crossing-critical graphs.  
On the way towards this description, we prove a new structural
result on plane graphs of bounded path-width.
Then we show that every $c\,$-crossing-critical
graph can be obtained from a $c\,$-crossing-critical graph of bounded size by
replicating bounded-size parts that already appear in narrow ``bands'' or
``fans'' in the graph.  
This also gives an algorithm to generate all $c$-cros\-sing-critical 
graphs of at most given order $n$ in polynomial time per each generated graph.
\end{abstract}

\section{Introduction}
\label{sec:intro}

Minimizing the number of edge-crossings in a graph drawing in the plane
(the {\em crossing number} $\crg(G)$ of the graph $G$) is considered one of the most important attributes of a ``nice drawing'' of a graph,
and this question has found numerous other applications
(for example, in VLSI design~\cite{leighton1983complexity}
and in discrete geometry~\cite{szekely1997crossing}).
Consequently, a great deal of research work has been invested into
understanding what forces the graph crossing number to be high.
There exist strong quantitative lower bounds, such as the famous Crossing
Lemma \cite{ajtai1982crossing,leighton1983complexity}.
However, the quantitative bounds show their strength typically in
dense graphs, and hence they do not shed much light on
the structural properties of sparse graphs of high crossing number.

Let us remark that on the positive side, for every fixed positive integer~$c$, it is possible
to decide whether an input graph has crossing number at most~$c$ in polynomial time~\cite{grohe2001computing,kawarabayashi2007computing,FPTcr-SODA25}.
However, in case the answer is negative, these algorithms do not reveal much
about the reasons why the crossing number is high: They proceed by first reducing the input instance
to a subinstance(s) of small tree-width, then applying a general meta-algorithmic result of Courcelle~\cite{courcelle}
(in \cite{grohe2001computing,kawarabayashi2007computing}), or using a specialized algorithm to solve the whole problem alongside the tree-width reduction routine \cite{FPTcr-SODA25}.
That is, one can only conclude from this that if the crossing-number
is large, it is because of the presence of a subgraph of small tree-width but with large crossing number.

This indicates that to understand the structural reasons for large crossing number, we need to study
minimal obstructions to drawing with less than $c$ crossing, called \emph{$c\,$-crossing-critical} graphs, in more detail.

\begin{definition}[crossing-critical]
\label{def:crosscritical}
Let $c$ be a positive integer.
A graph $G$ is \emph{$c\,$-crossing-critical} if $\crg(G)\ge c$, but every proper
subgraph $G'$ of $G$ has $\crg(G')<c$.
\end{definition}

By Kuratowski's theorem, the $1$-crossing-critical graphs are exactly the subdivisions of $K_5$
and $K_{3,3}$.  Thus, one could at first glance hope that we might be able to show that
(up to subdivisions), there are only finitely many $c\,$-crossing-critical graphs for every fixed $c$.
However, it has been known from \v{S}ir\'{a}\v{n}'s~\cite{vsiravn1984infinite}
and Kochol's~\cite{kochol1987construction} constructions that the structure of $c\,$-crossing-critical graphs
is quite rich for any $c\geq 2$, and already the first non-trivial case of $c=2$ shows a dramatic increase in
the complexity of the problem.  Bokal, Oporowski, Richter and Salazar recently succeeded in obtaining a
full description \cite{bokal2016characterizing} of all $2$-crossing-critical graphs 
up to finitely many small exceptions: They show that every sufficiently large $2$-crossing-critical graph of minimum degree at
least three is obtained by arranging any number of ``tiles'' from an explicit list of $42$ possibilities
in a M\"obius band fashion.

Our main result essentially shows that a characterization similar to~\cite{bokal2016characterizing}, i.e., every $c\,$-crossing-critical graph
consists of arbitrarily long band-like parts formed by concatenation of tiles connected through a central
subgraph of bounded size, holds for every fixed $c$.  However, unlike $c=2$, we do not provide an exact
description of the possible tiles. As we discuss later, there are complexity-theoretic reasons indicating that
a simple general description does not exist. In fact, given the increase in complexity already in the $c=2$
case, at the moment there does not seem to be any hope of extending the explicit description even to the $c=3$ case.

Thus, on a somewhat abstract level, we prove the following claims about sufficiently large $c\,$-crossing-critical graphs for any fixed $c$:
\begin{enumerate}[(S1)]
\item\label{it:twokinds}
There exist two kinds of local arrangements---bands and fans---such that any optimal drawing of
a sufficiently large $c\,$-crossing-critical graph contains at least one of them (Corollary~\ref{cor-tiles}).
Moreover, in the fan case, the crossing-criticality depends on the possibility to twist the drawing of the fan
analogously to the M\"obius band construction mentioned above (Observation~\ref{obs-fan-twisted}).
\item\label{it:reduction}
There are well-defined local operations (reductions) performed on such bands or fans that can
reduce any sufficiently large $c\,$-crossing-critical graph to one of (finitely many) ``basic'' $c$-cros\-sing-critical graphs
whose size is bounded by a function of $c$ (Corollary~\ref{cor-reducrit}).
\item\label{it:construction}
Importantly, the reductions are only applied in the case where the reduced parts
also appear elsewhere in the band or fan.
This property means that we can perform the converse---a well-defined bounded-size expansion
operation---to iteratively construct each $c$-cros\-sing-critical 
graph from a basic $c$-cros\-sing-critical graph of bounded size by only repeating
pieces that are already contained in the basic graph.

This yields a way to enumerate all the $c\,$-cros\-sing-critical 
graphs of at most given order $n$ in polynomial time per each generated graph (Theorem~\ref{thm-gen}).
More precisely, the total runtime is $O(n)$ times the output size.
\item\label{it:keepcritical}
Moreover, we show that the expansion operation actually preserves $c\,$-crossing-criticality (Corollary~\ref{cor-expcrit}).
This gives the desired structural characterization: The $c\,$-crossing-critical graphs are exactly those
that can be obtained from the basic ones with bounded number of vertices by expansions
which only replicate the previously present parts of the bands or fans (Theorem~\ref{thm:mainexpansion}).
Thus, the characterization can in principle be made precise for every fixed value of $c$: It suffices to list the basic $c\,$-crossing-critical
graphs of bounded size.  
\end{enumerate}

\begin{figure}
\centering\par
(a)\quad
\begin{tikzpicture}[scale=0.45]
\tikzstyle{every node}=[draw, shape=circle, minimum size=2.5pt,inner sep=0pt, fill=black]
\tikzstyle{every path}=[color=black]
\foreach \i in {0,...,3} {
\foreach \j in {1,-1} {
  \ifthenelse{\i<3 \OR \j=1}{
	\def\xx{6*\i+1.5-1.5*\j};
	\node at (\xx,1-\j) {};	\node at (\xx,1) {};
	\node at (\xx+1,1+\j) {};	\node at (\xx+2,1) {};
	\node at (\xx+2,1-\j) {};
 	\draw (\xx,1-\j) -- (\xx,1) -- (\xx+1,1+\j) -- (\xx+2,1) -- (\xx+2,1-\j);
 	\draw (\xx+2,1) -- (\xx+3,1);
  }{}
}}
\draw (-1,0) -- (21,0);	\draw (-1,1) -- (0,1);	\draw (-1,2) -- (21,2);
\foreach \i in {0,1} {
	\draw [dashed,-{Stealth[open]}] (24*\i-2,2*\i) -- (24*\i-2,1); 
	\draw [dashed,-{Stealth[open]}] (24*\i-2,1) -- (24*\i-2,2-2*\i); 
}
\end{tikzpicture}
\\(b)\quad
\begin{tikzpicture}[scale=0.52]
\useasboundingbox (1,-0.9) rectangle (22.03,4);
\tikzstyle{every node}=[draw, shape=circle, minimum size=2.5pt,inner sep=0pt, fill=black]
\tikzstyle{every path}=[color=black]
\foreach \i in {1,...,20} {
	\draw (\i,0) -- (\i+2,2);
	\draw (\i,2) -- (\i+2,0);
	\node at (\i+0.5,0.5) {};
	\node at (\i+1,1) {};
	\node at (\i+1.5,1.5) {};
}
\draw[color=white,fill=white] (0,0) rectangle (1.95,2);
\draw[color=white,fill=white] (21.05,0) rectangle (23,2);
\node (u) at (8,-1) {}; \node (u1) at (5,0) {}; \node (u2) at (12,0) {};
\node (v) at (13,3) {}; \node (v1) at (9,2) {}; \node (v2) at (16,2) {};
\draw[thick,dashed] (u) .. controls (10,0) and (11,2) .. (v);
\draw (u) to[bend right=12] (u2);
\draw (u) to[bend left=14] (u1);
\draw (v) to[bend left=12] (v2);
\draw (v) to[bend right=14] (v1);
\foreach \i in {1,22} {
	\draw [dashed,-{Stealth[open]}] (\i,0) -- (\i,1); 
	\draw [dashed,-{Stealth[open]}] (\i,1) -- (\i,2); 
}
\end{tikzpicture}
\caption{A schematic illustration of two basic methods of constructing crossing-critical graphs.
	(a)~The classical M\"obius-twist construction by Kochol~\cite{kochol1987construction}; note that the ends of the plane strip are joined together in a twisted way.
	(b)~An example construction in which the ends of a plane strip are joined together without a twist, but then a few added edges are forced to cross the strip.}
\label{fig:schemes}
\end{figure}

To give a more detailed (but still informal) explanation of these points, we
should review some of the key prior results.
First, the infinite $2$-crossing-critical family of Kochol~\cite{kochol1987construction}
explicitly showed one basic method of constructing crossing-critical
graphs---take a sequence of suitable small planar graphs (called \emph{tiles}, see~Section~\ref{sec:struc-tiles}),
concatenate them naturally into a plane strip and 
join the ends of this strip with the {\em M\"obius twist},
see the top part of Figure~\ref{fig:schemes} for an illustration.
Further constructions of this kind can be found, e.g., in
\cite{bokal2010infinite,pinontoan2003crossing,salazar2003infinite}.
In fact, \cite{bokal2016characterizing} essentially claims that such a
M\"obius twist construction is the only possibility for $c=2$;
there, the authors give an explicit list of $42$ tiles
which build in this way all the $2$-crossing-critical graphs
up to finitely many exceptions.

The second basic method of building crossing-critical graphs
was invented later by Hlin\v en\'y~\cite{hlinveny2002crossing};
it can be roughly described as constructing a suitable planar strip
whose ends are now joined without a twist (i.e., making a cylinder),
and adding to it a few edges which then have to cross the strip;
see the bottom part of Figure~\ref{fig:schemes} for an illustration.

A crossing-critical graph can also combine several smaller parts,
possibly arising from different constructions, or just being small ``sporadic''
crossing-critical graphs with no particular structure.
As an example, Bokal~\cite{bokal2010infinite} introduced
the so called {\em zip product} operation which combines two crossing-critical
graphs into a larger crossing-critical graph.

To complete the whole picture, a third method of building $c\,$-crossing-critical graphs was
discovered by Dvo\v r\'ak and Mohar in~\cite{dmcross}.  A more detailed analysis of this construction
is given in~\cite{bokal2022bounded}, where it is shown that it can be used only for $c\ge 13$.
The construction can be seen as a degenerate case of the M\"obius twist
construction where the whole strip shares a central high-degree vertex.

As we will see, the construction methods roughly represent
the local arrangements mentioned in \ref{it:twokinds}.
Hence, we can somewhat vaguely claim that no other method of constructing
infinite families of $c\,$-crossing-critical graphs is possible, for any fixed~$c$.

Moving on to statement~\ref{it:reduction},
we note that all three mentioned construction methods
involve long (and also ``thin'') planar strips, or {\em bands} as subgraphs 
(which degenerate into {\em fans} in the third kind of local arrangements;
see Definition~\ref{def:band-fan}).
In Corollary~\ref{cor-tiles}, we prove that such a long and ``thin'' planar 
band or fan must exist in any sufficiently large $c\,$-crossing-critical graph,
and we analyze its structure to identify elementary
connected tiles of bounded size forming the band.
We then argue that we can reduce repeated sections of the band (in the sense of Definition~\ref{def-redu})
while preserving $c\,$-crossing-criticality.

With respect to statements~\ref{it:construction} and \ref{it:keepcritical},
the converse expansion operation is described in Definition~\ref{def-expand}.
For a quick illustration, the simplest case of this expansion operation
is edge subdivision, that is replacing an edge with a path,
which clearly preserves $c\,$-crossing-criticality.

\subsection*{Paper organization}

We start with definitions and preliminary results about
crossing-critical graphs in Section~\ref{sec:prelim}.
In Section~\ref{sec:struc-tiles}, we focus on the structure of plane graphs of bounded path-width,
showing the existence of a long band or fan.
Then, in Section~\ref{sec:shorten-band}, we introduce the
reduction and expansion operations on bands and argue that they preserve crossing-criticality.
Next, we combine the results to give the structural theorem in Section~\ref{sec:main} and the algorithm
for generating the crossing-critical graphs in Section~\ref{sec:alg}.
Some final remarks are presented~in~Section~\ref{sec:conclusion}.

\section{Preliminaries}\label{sec:prelim}

We consider loopless multigraphs by default; i.e., throughout the paper,
graphs are allowed to have parallel edges, but not loops.  Let us remark that loops are
irrelevant when considering the crossing number, as we can always draw them without crossings,
and in particular, the crossing-critical graphs are loopless.
Similarly, parallel edges could be avoided (with a slight adjustment of definitions) by subdividing
them in order to make our graphs simple.  However, allowing parallel edges makes
some of our definitions simpler.  We generally follow the basic terminology of topological graph theory, see e.g.~\cite{mohthom}.

\subsection{Graph drawing and the crossing number}

A {\em drawing} of a graph $G$ in the plane consists of
\begin{itemize}
\item an injective function $\nu:V(G)\to \mathbb{R}^2$, assigning to each vertex a distinct point in the plane, and
\item a function $\varepsilon$ assigning to each edge $e=uv$ of $G$ a simple
curve $\varepsilon(e)$ from $\nu(u)$ to $\nu(v)$ and otherwise disjoint from the image of $\nu$,
\end{itemize}
such that for every point $p\in \mathbb{R}^2\setminus \nu(V(G))$, there exists at most two edges $e\in E(G)$
such that $p$ lies on the curve $\varepsilon(e)$; i.e., no three curves representing edges cross in a common point different from their ends.
A {\em crossing} is then any point of $\mathbb{R}^2\setminus \nu(V(G))$ lying on the curves $\varepsilon(e_1)$ and $\varepsilon(e_2)$
for two distinct edges $e_1,e_2\in E(G)$.
Each maximal connected subset of the complement of the drawing in the plane is a \emph{face} of the drawing;
let us remark that crossings may appear as part of the boundary of a face.
A drawing without crossings in the plane is called a \emph{plane drawing}.
A \emph{plane graph} is a graph with a fixed plane drawing.
A graph is \emph{planar} if it has a plane drawing.
If $H$ is a subgraph~of~$G$, a drawing $(\nu',\varepsilon')$ of $H$ is \emph{induced} by the drawing $(\nu,\varepsilon)$
if $\nu'$ is the restriction of $\nu$ to $V(H)$ and $\varepsilon'$ is the restriction of $\varepsilon$ to $E(H)$.

\begin{definition}[crossing number]
\label{def:crossingn}
The {\em crossing number} $\crg(G)$ of a graph $G$
is the minimum number of crossings of edges in a drawing of $G$ in the plane.
\end{definition}
Hence, a graph $H$ is planar if and only if~$\crg(H)=0$.
Let us remark that a $c\,$-crossing-critical graph may have no drawing
with precisely $c$ crossings (for example, the graph $C_3\times C_3$ is $2$-crossing-critical,
but has crossing number $3$).
The following result is classical.
\begin{theorem}[Richter and Thomassen~\cite{richter1993minimal}]\label{thm-crit-cno}
There exists a function $\fnlab{thm-crit-cno}:\mathbb{N}\to\mathbb{N}$ such that for every positive integer $c$,
every $c\,$-crossing-critical graph has crossing number at most $\fnlab{thm-crit-cno}(c)$.
\end{theorem}
Richter and Thomassen~\cite{richter1993minimal} proved this is true for the function $\fnlab{thm-crit-cno}(c)=\lceil 5c/2+16\rceil$.
This bound was recently improved by Barát and Tóth~\cite{barat2022improvement},
to $2c+8\sqrt{c}+47$.  Let us remark that the bound of Barát and Tóth~\cite{barat2022improvement} still seems to be far from optimal --
we do not know any $c\,$-crossing-critical graphs of crossing number more than $c+\Theta(\sqrt c)$.

Suppose that $\Gamma$ is a drawing of a graph $G$ in the plane with crossings, and let $G'$ be the
plane graph obtained from this drawing by replacing the crossings with new
vertices of degree $4$.  We say that $G'$ is the \emph{planarization of~$\Gamma$} (or of $G$, if the drawing is clear from the context), 
and the new vertices are the \emph{crossing vertices} of $G'$.

\subsection{2-connectivity}\label{ssec:2con}

The crossing number is additive over $(\le\!1)$-cuts.
More precisely, if $H$ is obtained from $H_1$, \ldots, $H_b$ by gluing over vertex cuts of size at most one, then
\begin{equation}\label{eq-addit}
\crg(H)=\crg(H_1)+\cdots+\crg(H_b),
\end{equation}
as can be seen by combining the drawings of $H_1$, \ldots, $H_b$ in the natural way so that they overlap
only on the cut vertices. More precisely, to show that this is possible, it is convenient to first observe that the circular inversion
of the plane can be used to modify any drawing of a graph $H$ so that a prescribed vertex $v\in V(H)$ is drawn in the boundary of the unbounded
region of the complement of the drawing. One can then choose a simple-closed curve $\gamma$ in the unbounded region so that the drawing
is contained in the closed interior of $\gamma$ and touches $\gamma$ exactly in this vertex $v$,
and then continuously deform the plane to turn $\gamma$ into an acute triangle with $v$ forming one of the vertices.
If $H'$ and $H''$ are two graphs intersecting in exactly
one vertex $v$, we can perform this transformation on the drawings of both $H'$ and $H''$, then simply move and glue the two triangles containing their drawings
so that they intersect exactly in $v$.

The additivity of the crossing number over $2$-connected blocks allows us to focus only on \emph{$2$-connected} crossing-critical graphs.
More precisely, we can argue about the validity of adding this constraint as follows.
For integers $c\ge 1$ and $c'\ge c$, let us say a graph is \emph{$(c,c')$-crossing-critical} if
it has crossing number exactly $c'$ and all proper subgraphs have crossing number less than $c$.
\begin{proposition}[folklore]\label{prop:2conn-cr}
Let $H$ be a graph and $c$ a positive integer.  The following claims are equivalent.
\begin{enumerate}[(i)]
\item\label{eqstmt-crit} The graph $H$ is $c\,$-crossing-critical.
\item\label{eqstmt-deltacrit} There exists an integer $c'$ such that $c\le c'\le \fnlab{thm-crit-cno}(c)$
and $H$ is $(c,c')$-crossing-critical.
\item\label{eqstmt-blocks} Letting $H_1$, \ldots, $H_b$ be the $2$-connected blocks of $H$
and letting $c'_i=\crg(H_i)$ for $i\in \{1,\ldots, b\}$, there exist
positive integers $c_1\le c'_1$, \ldots, $c_b\le c'_b$ such that
$$c\le c'_1+\ldots+c'_b\le c+\min\{c'_i-c_i:i\in \{1,\ldots,b\}\}$$ and for every $i\in \{1,\ldots,b\}$, the block $H_i$ is $c_i$-crossing-critical.
\end{enumerate}
\end{proposition}
\begin{proof}
If $H$ is $c\,$-crossing-critical, then it has crossing number $c'$ for some non-negative integer $c'\le \fnlab{thm-crit-cno}(c)$
by Theorem~\ref{thm-crit-cno}, and thus it is $(c,c')$-crossing-critical.

Suppose now that $H$ is $(c,c')$-crossing-critical for a non-negative integer $c'\le \fnlab{thm-crit-cno}(c)$.
For each block $H_i$ of $H$, let $c_i$ be the smallest positive integer strictly larger
than the crossing number of each proper subgraph of $H_i$.  Observe that $\crg(H_i)\ge c_i-1$, and that $H_i$
is $c_i\,$-crossing-critical if and only if $\crg(H_i)\ge c_i$.  Recall that $c'_i=\crg(H_i)$, and thus
we have $c'_i\ge c_i-1$, and $c'_i\ge c_i$ if and only if $H_i$ is $c_i\,$-crossing-critical.

By (\ref{eq-addit}), we have
$$c\le c'=\crg(H)=\sum_{j=1}^b \crg(H_j)=c'_1+\ldots+c'_b.$$
Consider any $i\in \{1,\ldots,b\}$ and an edge $e\in E(H_i)$ with $\crg(H_i-e)$ maximum;
the choice of $c_i$ implies that $\crg(H_i-e)=c_i-1$.
Therefore, (\ref{eq-addit}) gives
\begin{align*}
\crg(H-e)&=\crg(H_i-e)+\sum_{j\in \{1,\ldots,b\}\setminus\{i\}} \crg(H_j)\\
&=c'_1+\ldots+c'_b-c'_i+c_i-1=c'+c_i-c'_i-1.
\end{align*}
Since $H$ is $(c,c')$-critical, it follows that
$$c-1\ge \crg(H-e)=c'+c_i-c'_i-1,$$
and thus $c'_1+\ldots+c'_b=c'\le c+c'_i-c_i$, as required.
Moreover, $c\le c'\le c+c'_i-c_i$ implies that $c'_i\ge c_i$, and thus the block $H_i$ is $c_i$-crossing-critical.
Hence, all the conditions of \ref{eqstmt-blocks} hold.

Finally, suppose that the blocks $H_1$, \ldots, $H_b$ of $H$ satisfy \ref{eqstmt-blocks}
for some positive integers $c_1\le c'_1$, \ldots, $c_b\le c'_b$.
By (\ref{eq-addit}), we have
$$\crg(H)=\sum_{j=1}^b \crg(H_j)=c'_1+\ldots+c'_b\ge c.$$
Consider any edge $e\in E(H)$, contained in the block $H_k$ for some $k\in\{1,\ldots,b\}$.
Since $H_k$ is $c_k$-crossing-critical, we have $\crg(H_k-e)<c_k$, and thus by (\ref{eq-addit}),
\begin{align*}
\crg(H-e)&=\crg(H_k-e)+\sum_{j\in \{1,\ldots,b\}\setminus\{k\}} \crg(H_j)<c'_1+\ldots+c'_b-(c'_k-c_k)\\
&\le c+\min\{c'_i-c_i:i\in \{1,\ldots,b\}\}-(c'_k-c_k)\le c.
\end{align*}
We conclude that $H$ is $c\,$-crossing-critical.
\end{proof}
Hence, to obtain information on a $c\,$-crossing-critical graph $H$,
it suffices to study its $2$-connected blocks, where each such block $H_i$ is $c_i$-crossing-critical
for some $c_i\le \crg(H_i)\le \crg(H)\le \fnlab{thm-crit-cno}(c)$.
In conclusion, restricting ourselves only to $2$-connected crossing-critical graphs is without loss of generality.
Let us remark that our main result Theorem~\ref{thm:mainexpansion} actually gives a characterization of $2$-connected $(c,c')$-crossing-critical (rather than just $c$-crossing-critical) graphs, so this restriction has to be clarified.

Let us also note the following useful fact.

\begin{observation}\label{obs-planarization-2connected}
Let $G$ be a 2-connected graph drawn in the plane optimally, i.e., with exactly $\crg(G)$ crossings,
and let $G'$ be the planarization of $G$.  Then $G'$ is $2$-connected.
\end{observation}

\begin{proof}
Suppose for a contradiction that $G'$ has a cutvertex $v$, and let $x'_1$ and $x'_2$ be vertices of $G'$ contained in
different components of $G'-v$.

For $i\in \{1,2\}$, if $x'_i$ is a crossing vertex of $G'$, then let $x_i$ be an
edge~of~$G$ passing through the crossing; otherwise, let $x_i=x'_i$ be the corresponding vertex~of~$G$.  Since $G$ is $2$-connected, it contains
a cycle $K$ passing through $x_1$ and $x_2$, and the corresponding walk $K'$ in $G'$ must pass through $v$ twice.
We conclude that $v$ is a crossing vertex. Observe that the neighbors of $v$ in $G'$ are four distinct vertices of $G$, since adjacent edges do not cross in optimal drawings.

Let $G'_1$ and $G'_2$ be the components of $G'-v$ containing $x'_1$
and $x'_2$, respectively.  Since $G'$ is a plane graph, $G'_1$ and $G'_2$ are disjointly drawn plane graphs.  
Moreover, since $K'$ passes through $v$ twice, the vertex $v$ has two neighbors in $G'_1$ and two neighbors in $G'_2$.
Redraw $G'_2$ in the same
face of $G'_1$ but in a mirrored way, and add a matching of size two between the neighbors of $v$ in $G'_1$ and $G'_2$,
where the matching is drawn in a non-crossed way.  The resulting graph $G''$ is the planarization of a drawing of $G$ with fewer than $\crg(G)$ crossings, which
is a contradiction.
\end{proof}

\subsection{Forbidden structures for crossing-critical graphs}

Structural properties of crossing-critical graphs have been studied for more
than two decades, and we now briefly review 
some of the previous important results which we shall use.
Our approach to dealing with ``long and thin'' subgraphs
in crossing-critical graphs relies on the structural notion 
of \emph{path-width} of a graph.

\begin{definition}[path decomposition and path-width]\label{def:path-width}
A \emph{path decomposition} of a graph $G$ is a pair $(P,\beta)$, where $P$
is a path and $\beta$ is a function that assigns pairwise edge-disjoint subgraphs of $G$,
called \emph{bags}, to the nodes of $P$, such that
\begin{itemize}
\item for each edge $e\in E(G)$, there exists $x\in V(P)$ such that
$e\in E(\beta(x))$, and
\item for every $v\in V(G)$, the set $\{x\in V(P):v\in V(\beta(x))\}$ induces a non-empty connected subpath of $P$.
\end{itemize}
The \emph{width} of the decomposition is the maximum of $|V(\beta(x))|-1$ over all
nodes $x$ of $P$, and the \emph{path-width} of $G$ is the minimum width over all path
decompositions of~$G$.
\end{definition}
Let us remark that we refer to the vertices of the path $P$ as nodes to make it clear
they are distinct from the vertices of $G$.
It was proved that $c\,$-crossing-critical
graphs have path-width bounded in terms of~$c$.

\begin{theorem}[Hlin\v en\'y~\cite{Hl}]\label{thm:bounded-pw}
There exists a function $\fnlab{thm:bounded-pw}:\mathbb{N}\to\mathbb{N}$ such that
for every positive integer $c$, every $c\,$-crossing-critical graph has path-width at
most~$\fnlab{thm:bounded-pw}(c)$.
\end{theorem}

Hence, every crossing-critical graph can be seen as having a ``linear'' structure.
Another point of view is as follows: A graph is known to have bounded
path-width if and only if it avoids subdivisions of large complete binary
trees~\cite{bienstock1991quickly}.  Thus, subdivisions of large complete binary
trees are forbidden in crossing-critical graphs.

A concept important both for the proof of Theorem~\ref{thm:bounded-pw}
and for this work is that of \emph{nests} in a drawing of a graph; see Figure~\ref{fig:nests} for an illustration of the following definition.

\begin{figure}
\begin{center}
\includegraphics[width=0.67\textwidth]{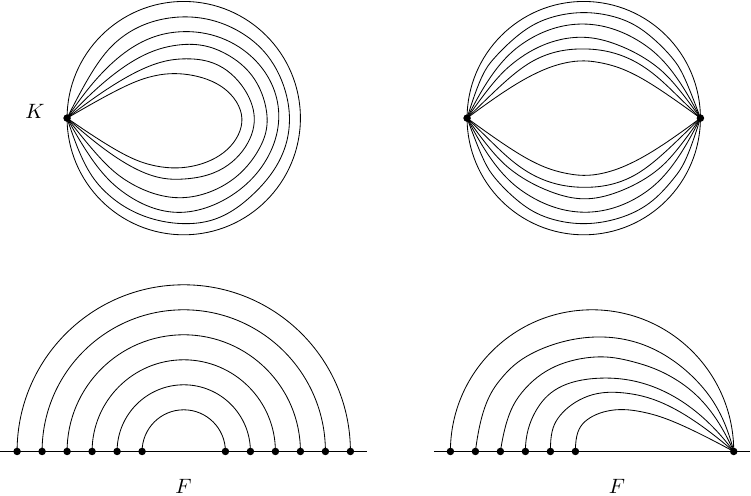}
\end{center}
\caption{An illustration of Definition~\ref{def:nests}:
	a $1$-nest, a $2$-nest, a proper $F$-nest, and a degenerate $F$-nest, each of depth~$6$.}
\label{fig:nests}
\end{figure}

\begin{definition}[nests]
\label{def:nests}
Let $G$ be a $2$-connected plane graph.  For an integer $k\ge 0$, a
\emph{$k$-nest} in $G$ of \emph{depth} $m$ is a sequence $(C_1, C_2,
\ldots, C_m)$ of pairwise edge-disjoint cycles such that for some set $K$ of
$k$ vertices and for every $i<j$, the cycle $C_i$ is drawn in the closed
disk bounded by $C_j$ and $V(C_i)\cap V(C_j)=K$.

Let $F$ be a face of $G$ and suppose that $v_1,v_2,\ldots,v_{2m}$ are some of
the vertices incident with~$F$ listed in the cyclic order along the face. 
Let $P_1$, \ldots, $P_m$ be pairwise vertex-disjoint paths in $G$ such that
for $1\le i\le m$, the path $P_i$ joins $v_i$ with $v_{2m+1-i}$.  Then, we say that
the sequence $(P_1, \ldots, P_m)$ forms a \emph{proper $F$-nest of depth $m$}.

Similarly, suppose that $v_1$, $v_2$, \ldots, $v_m$ are some of
the vertices incident with $F$ listed in the cyclic order along the face,
$u$ is a vertex incident with $F$ different from $v_1$, \ldots, $v_m$, and $P_1$, \ldots, $P_m$ are paths in $G$ such that
for $1\le i\le m$, the path $P_i$ joins $v_i$ with $u$.  If the paths pairwise intersect only in $u$, then we say
that the sequence $(P_1,\ldots, P_m)$ forms a \emph{degenerate $F$-nest of depth~$m$}.
\end{definition}
The nests were implicitly considered in \cite{Hl,HS}.
In particular, a 2-nest corresponds to two vertices joined by
a large number of paths with pairwise disjoint interiors, which was shown
not to occur in a crossing-critical graph by Hlin\v en\'y and Salazar~\cite{HS}.
The nests were explicitly defined by Hernandez-Velez et al.~\cite{crnest}
who concluded that no optimal drawing of a $c\,$-crossing-critical graph 
can contain a $0$-, $1$-, or $2$-nest whose depth is large compared to~$c$.
\begin{theorem}[Hernandez-Velez et al.~\cite{crnest}]\label{thm-nestdepth}
There exists a function $\fnlab{thm-nestdepth}:\mathbb{N}\to\mathbb{N}$ such that the following claim holds.
Let $c$ be a positive integer and let $G$ be a $2$-connected $c\,$-crossing-critical graph.
Let $G'$ be the planarization of an optimal drawing of $G$ and let $Y$ be the
set of its crossing vertices.  Then every $0$-, $1$-, or $2$-nest
in $G'$ disjoint from $Y$ has depth at most $\fnlab{thm-nestdepth}(c)$.
\end{theorem}
Let us remark that it suffices to take $\fnlab{thm-nestdepth}(c)=15c^2+105c+16$.
We now similarly exclude the existence of both proper and degenerate $F$-nests of large depth in crossing-critical graphs.

\begin{lemma}\label{lemma-fnest}
Let $G$ be a $2$-connected $c\,$-crossing-critical graph, let $\Gamma$ be a drawing of $G$ in the plane with the smallest number of crossings, and let $F$ be a face of this drawing.
Let $G'$ be the planarization of $\Gamma$ and let $Y$ be the set of its crossing vertices.
Then every proper or degenerate $F$-nest in $G'$ disjoint from $Y$ has depth at most $\fnlab{lemma-fnest}(c)=6c(\fnlab{thm-crit-cno}(c)+1)$.
\end{lemma}
\begin{proof}
Let
\begin{equation}\label{eq-defk_0}
k_0=\fnlab{thm-crit-cno}(c),
\end{equation}
so that by Theorem~\ref{thm-crit-cno} the drawing $\Gamma$ of $G$ has
at most $k_0$ crossings.  Let $(P_1, \ldots, P_t)$ be a proper or degenerate $F$-nest in $G'$ disjoint from $Y$;
since the paths of the $F$-nest do not contain the crossing vertices, they are also paths in $G$.
By performing a circular inversion of the plane if necessary, we can assume that $F$ is the outer face of the drawing $\Gamma$.
Moreover, we can without loss of generality assume that the starting vertices $v_1$, \ldots, $v_t$ of the paths $P_1$, \ldots, $P_t$
appear on the boundary of $F$ in the clockwise order.  In case that the $F$-nest is degenerate, let $u$ denote the common vertex of all the paths $P_1$, \ldots, $P_t$;
note that by starting the labeling of the paths of the $F$-nest in the first starting vertex after $u$ in the clockwise order along $F$, we can
without loss of generality assume that the clockwise order of the starting points and $u$ along $F$ is $v_1$, \ldots, $v_t$, $u$.

For any indices $1\le i_1<i_2\le t$, let $G_{i_1,i_2}$ be the subgraph of $G$ drawn between $P_{i_1}$ and $P_{i_2}$
(inclusive). More precisely, let $\gamma_{i_1,i_2}$ be a simple closed curve tracing $P_{i_1}$, then the boundary of $F$
in the counterclockwise order until it hits the end of $P_{i_2}$, then the reverse of $P_{i_2}$, then going counterclockwise
inside~$F$ infinitesimally close to its boundary until it reaches the starting point of $P_{i_1}$.  Let $\Delta_{i_1,i_2}$
be the closed disk bounded by $\gamma_{i_1,i_2}$, and let $G_{i_1,i_2}$ be the subgraph~of~$G$ drawn in $\Delta_{i_1,i_2}$.
Observe that if $1\le i_1<i_2\le i_3<i_4\le t$, then the subgraphs $G_{i_1,i_2}$ and $G_{i_3,i_4}$
do not share any crossing: If $i_2=i_3$, then they intersect in the path $P_{i_2}$ which does not contain any crossing,
and otherwise they are either disjoint or intersect exactly in the common vertex $u$ of all the paths, depending on
whether the $F$-nest is proper or degenerate.

Let $a$ and $b$ be indices such that $1\le a<b\le t$, the subgraph $G_{a,b}$ contains no crossings, and $q=b-a+1$ is maximum.
Let $i_j=1+(j-1)q$.  We claim that $i_{k_0+2}>t$; otherwise, the choice of $q$ would imply that the
subgraphs $G_{i_j,i_{j+1}}$ for $j\in \{1,\ldots, k_0+1\}$ together contain more than $k_0$ distinct crossings.
Therefore,
\begin{equation}\label{ineq-boundt}
t\le i_{k_0+2}-1=(k_0+1)q.
\end{equation}
Consider any index $i$ such that $a+1\le i\le b-2$.  A minor technical issue for the argument below is that
the graph $G-V(G_{i,i+1})$ may contain, in addition to the two components containing $P_{i-1}$ (or $P_{i-1}-u$) and $P_{i+2}$
(or $P_{i+2}-u$), further components only attaching to $P_i$ and $P_{i+1}$.
We deal with this issue by defining a supergraph $G_i$ of $G_{i,i+1}$ obtained by ``maximally pushing the paths $P_i$ and $P_{i+1}$
outwards, while fixing their endpoints'', thus including the parts of $G_{i-1,i}$ and $G_{i+1,i+2}$ that only attach to
$P_i$ and $P_{i+1}$, respectively.  More precisely, let $R_i=V(P_{i-1}\cup P_{i+2})$ if the considered $F$-nest is proper, and
let $R_i=V(P_{i-1}\cup P_{i+2})\setminus\{u\}$ if it is degenerate.
Let $G_i$ be the maximal $2$-connected subgraph of $G-R_i$ containing
$G_{i,i+1}$ (let us remark that the subgraph $G_{i,i+1}$ itself is $2$-connected, since $G$ is $2$-connected, and thus $G_i$ exists).
Observe that $G-V(G_i)$ has exactly two components, one containing $P_{i-1}$ (or $P_{i-1}-u$) and the other one containing $P_{i+2}$
(or $P_{i+2}-u$).  Let $C_i$ be the cycle bounding the outer face of $G_i$.  

Let $m=\lfloor \tfrac{q-1}{6}\rfloor$, so that $6m+1\le q\le 6(m+1)$.
If $m\le c-1$, then by (\ref{eq-defk_0}) and (\ref{ineq-boundt}), we obtain 
$$t\le (k_0+1)q\le 6(k_0+1)(m+1)\le 6c(\fnlab{thm-crit-cno}(c)+1),$$
matching the conclusion of this lemma.  Hence, for contradiction suppose that $m\ge c$.

Consider the cycles $C_{a+3i-2}$ for $1\le i\le 2m$; these cycles are defined, since $6m+1\le q$.
Let $e$ be an edge of $P_{a+3m}$.  Since $G$ is crossing-critical, the graph $G-e$ has a drawing $\Gamma'$ with $\crg(G-e)<c$
crossings. Since $m\ge c$, there exist indices $x\in \{1,\ldots,m\}$ and $y\in\{m+1,\ldots,2m\}$ such that there are no crossings
on the edges of the cycles $K_1=C_{a+3x-2}$ and of $K_2=C_{a+3y-2}$ in $\Gamma'$.

Let $H_1$ be the subgraph of $G$ consisting of $K_1$, the component 
$Z_1$ of $G-V(G_{a+3x-2})$ that does not contain $e$, and of the edges between them.  
Let $H_2$ be the subgraph of $G$ consisting of $K_2$ and the component 
$Z_2$ of $G-V(G_{a+3y-2})$ that does not contain $e$ and of the edges between them.
Let~$K$ be the cycle consisting of a path in $K_1$, a path in $K_2$ and
of two subpaths of the boundary~of~$F$ whose interior in the drawing $\Gamma$ of $G$ is disjoint from $K_1\cup K_2$;
if the $2$-nest is degenerate, then one of the two subpaths consists just of the vertex $u$.

Consider the drawings $\Gamma'_1$ and $\Gamma'_2$ of $H_1$ and $H_2$ induced by the drawing $\Gamma'$ of $G-e$.
For $i\in\{1,2\}$, recall that the subgraph $Z_i$ is connected and that no edge of the cycle $K_i$ is crossed in the drawing $\Gamma'$ of $G-e$,
and thus by performing the circular inversion of the plane
if necessary, we can modify the drawing $\Gamma'_i$ so that $Z_i$ is drawn outside of $K_i$ in $H_i$,
and moreover, so that the path $K\cap K_i$ is incident with the outer face of $H_i$.  Finally, we can continuously
deform the plane so that $K_i$ is drawn in exactly the same position as in the drawing $\Gamma$ of $G$, and so that the drawings
of $Z_1$ and $Z_2$ are disjoint from each other and from the closed disk bounded by $K$ in the drawing $\Gamma$ of $G$.
Let $\Gamma''$ be the resulting drawing of $H_1\cup H_2$.

Let $G_0$ be the subgraph of $G$ drawn in the closed disk bounded by $K$ in the drawing $\Gamma$.
Let $\Gamma'''$ be the drawing of the subgraph $G_{a+3x-2}\cup G_{a+3y-2}\cup G_0$ induced by $\Gamma$.
Then the combination of $\Gamma''$ and $\Gamma'''$ is a drawing of $G$ such that each crossing belongs to $H_1$ or $H_2$,
and thus corresponds to a distinct crossing in the drawing $\Gamma'$ of $G-e$.
Therefore, this drawing has at most $\crg(G-e)<c$ crossings, which contradicts the assumption that $G$ is $c\,$-crossing-critical.
\end{proof}

Finally, let us note the following well-known observation.
\begin{observation}\label{obs-multip}
Every edge of a $c$-crossing-critical graph $G$ has multiplicity at most $c$.
\end{observation}
\begin{proof}
Suppose for a contradiction that $e_1, \ldots, e_{c+1}$ are parallel edges of $G$,
and let $G'$ be the subgraph of $G$ obtained by deleting $e_{c+1}$.  Since $G$ is $c$-crossing-critical,
$G'$ has a drawing in the plane with less than $c$ crossings, and in particular there exists
$i\in \{1,\ldots,c\}$ such that the edge $e_i$ does not cross any edge of $G'$ in this drawing.
But then we can draw all edges $e_1$, \ldots, $e_{c+1}$ along $e_i$ so that there are no crossings
on them, obtaining a drawing of $G$ with less than $c$ crossings.
\end{proof}

\subsection{Framed graphs and their properties}

We will now consider several structural properties of the subgraphs induced by
the bags of a path decomposition of a plane graph, where the properties can refer to the
``boundary'' vertices shared with two consecutive bags on the path.
We are only going to need this in the case that these intersections all have the same size,
motivating the following definition.

A \emph{framed graph} is a graph $G$ together with tuples $L_0(G)=(l_1,\ldots,l_a)$, $R_0(G)=(r_1,\ldots,r_a)$
and $C(G)=(c_1,\ldots,c_b)$ of vertices of $G$, where the $2a+b$ vertices in these tuples are all distinct.  We say that $\sigma=(a,b)$ is the \emph{signature} of $G$ and define $|\sigma|=a+b$.
Let $L(G)$ be the concatenation of $L_0(G)$ and $C(G)$, and let $R(G)$ be the concatenation of $R_0(G)$ and $C(G)$.
We view $G$ as a graph with the boundary vertices $L(G)\cup R(G)$ divided into the left part $L(G)$ and the right part $R(G)$,
where the vertices of $C(G)$ are shared by both parts  of the boundary.
Framed graphs $G_1$ and $G_2$ are \emph{isomorphic} if there exists an isomorphism of $G_1$ and $G_2$
mapping $L(G_1)$ to $L(G_2)$ and $R(G_1)$ to $R(G_2)$.
A \emph{property} of framed graphs is a class $\RR$ of framed graphs
closed under isomorphisms.  A \emph{drawing-sensitive property} of plane framed graphs
is a class $\RR$ of plane framed graphs (not necessarily closed under homeomorphisms of the plane).
We say that the (plane) framed graphs in $\RR$ \emph{satisfy} the property $\RR$.

As an example, ``the first vertex of $L_0(G)$ and the first vertex of $R_0(G)$ are joined in $G$ by a path whose internal vertices
do not belong to $L(G)\cup R(G)$'' specifies a property of framed graphs, and ``there exists a cycle $C$ in $G$ such that the first
vertex of $L_0(G)$ is drawn in the open disk bounded by $C$ and the first vertex of $R_0(G)$ is drawn outside of the closed disk bounded by $C$''
specifies a drawing-sensitive property of plane framed graphs.

We are going to need the natural composition operation on (plane) framed graphs:  If $G_1$ and $G_2$
are framed graphs of the same signature such that $C(G_1)=C(G_2)$, $R_0(G_1)=L_0(G_2)$, and $G_1$ and $G_2$ only intersect in the vertices of $R(G_1)$ and $L(G_2)$,
then we say that $G_1$ and $G_2$ are \emph{compatible}; in case that $G_1$ and $G_2$ are plane framed graphs,
we additionally require that their drawings are disjoint except for the points representing the vertices of $R(G_1)=L(G_2)$.
The \emph{composition} $G_1+G_2$ of compatible (plane) framed graphs $G_1$ and $G_2$ is the (plane) framed graph $G=G_1\cup G_2$ with $L_0(G)=L_0(G_1)$, $R_0(G)=R_0(G_2)$,
and $C(G)=C(G_1)=C(G_2)$, where if $G_1$ and $G_2$ are plane framed graphs, then the drawing of $G$ is the union of the drawings of $G_1$ and $G_2$.

For convenience, let us also introduce a slight variation on the composition operation which does not require
$G_1$ and $G_2$ to share vertices:  Suppose $G_1$ and $G_2$ are 
framed graphs with the same signature.  Let $G'_2$ be a framed graph isomorphic to $G_2$ such that $L(G'_2)=R(G_1)$
and otherwise disjoint from $G_1$.  We then let $G_1\oplus G_2 = G_1+G'_2$; hence, the result of the operation
$\oplus$ is only determined uniquely up to isomorphism.

In general, the information whether $G_1$ and $G_2$ satisfy a property $\RR$ is not sufficient
to determine whether their composition $G_1+G_2$ satisfies $\RR$ or not.  However,
this is often the case if we maintain some additional information about $G_1$ and $G_2$.  
More precisely, let $A$ be a set and let $\circ: A^2\to A$ be a binary operation on $A$. Suppose that $f$ is a function assigning to each (plane)
framed graph $G$ a value $f(G)\in A$. The function $f$ \emph{determines} a (drawing-sensitive) property
$\RR$ if there exists a subset $A_\RR\subseteq A$ such that a (plane) framed graph $G$ satisfies the property $\RR$
if and only if $f(G)\in A_\RR$.  
We say that the function $f$ is \emph{composable via $\circ$} if $f(G_1+G_2)=f(G_1)\circ f(G_2)$
holds for all compatible (plane) framed graphs $G_1$ and $G_2$.

In the described situation, the values of $f$ on $G_1$ and $G_2$ determine the value on their composition $G_1+G_2$,
and thus also whether $G_1+G_2$ satisfies the property $\RR$ or not.  Of course, this
is only interesting if the set $A$ is small, e.g., its size is bounded by a function of $|\sigma|$ for the common signature $\sigma$ of $G_1$ and $G_2$; otherwise,
$f$ could simply be chosen as the identity function and $\circ$ as the composition of the framed graphs.

A pair $(A,\circ)$, where $\circ$ is a binary operation on $A$, is a \emph{semigroup}
if the operation $\circ$ is associative.  A function $f$ assigning to framed graphs values from a semigroup $A$ is \emph{isomorphism-invariant}
if $f(G)=f(G')$ for any two isomorphic framed graphs $G$ and $G'$. 
Let us note the following easy observation. 

\begin{observation}\label{obs-semi-abstract}
Let $\sigma$ be a pair of non-negative integers and let $f$ be a surjective isomorphism-invariant function
assigning to each framed graph of signature $\sigma$ a value from a set $A$.
If $f$ is composable via a binary operation~$\circ$, then $(A,\circ)$ is a semigroup.
\end{observation}
\begin{proof}
Consider any $a_1,a_2,a_3\in A$.  Since $f$ is surjective, for $i\in\{1,2,3\}$, there exists a framed graphs $G_i$ of signature $\sigma$
such that $f(G_i)=a_i$.  Since $f$ is isomorphism-invariant, we can furthermore assume
that $R(G_1)=L(G_2)$, $R(G_2)=L(G_3)$, and the graphs $G_1$, $G_2$, and $G_3$ are otherwise disjoint.
Thus, $G_1$ and $G_2$, $G_2$ and $G_3$, $G_1$ and $G_2+G_3$, and $G_1+G_2$ and $G_3$ are compatible.
Let $G=G_1+G_2+G_3$; the composition is associative, so the order of evaluation does not matter.
Since $f$ is composable via $\circ$, we conclude that
$$(a_1\circ a_2)\circ a_3=f((G_1+G_2)+G_3)=f(G)=f(G_1+(G_2+G_3))=a_1\circ(a_2\circ a_3).$$
It follows that the operation $\circ$ is associative, and thus $(A,\circ)$ is a semigroup.
\end{proof}

It seems at first clear that the same should hold for homeomorphism-invariant functions on plane framed graphs,
where the notion of homeomorphism-invariantness is defined analogously to isomorphism-invariantness.
However, in this case it is not clear that we can choose (say) the plane framed graphs $G_1$ and $G_2$
to be compatible, since the compatibility depends on their plane drawings.
To sidestep this issue, we only consider drawing-sensitive properties that can be expressed in terms of the properties of
auxiliary (non-plane) framed graphs, allowing us to use Observation~\ref{obs-semi-abstract}.
More precisely, an \emph{abstractifier} for a drawing-sensitive property $\RR$ of plane framed graphs of fixed signature $\sigma$ is
a 5-tuple $(\rho,f,A,\circ,A_\RR)$, where
\begin{itemize}
\item $\rho$ is a function mapping each plane framed graph of signature $\sigma$ to a framed graph of the same signature,
\item $f$ is a surjective isomorphism-invariant function on framed graphs of signature $\sigma$ with range $A$, composable via the binary operation $\circ$, 
\item $f(\rho(G_1+G_2))=f(\rho(G_1)\oplus\rho(G_2))$ for all compatible plane framed graphs $G_1$ and $G_2$ of signature $\sigma$, and
\item a plane framed graph $G$ of signature $\sigma$ has property $\RR$ if and only if $f(\rho(G))\in A_\RR$.
\end{itemize}
Using Observation~\ref{obs-semi-abstract}, we then straightforwardly obtain the following conclusion.

\begin{corollary}\label{cor-semi}
Let $\sigma$ be a pair of non-negative integers, and let $(\rho,f,A,\circ,A_\RR)$ be an abstractifier for a drawing-sensitive property
$\RR$ of plane framed graphs of signature $\sigma$.  Let $f'$ be the composition of the functions $\rho$ and $f$.
Then $f'$ is a function from plane framed graph of signature $\sigma$ to $A$ composable via $\circ$, $f'$ determines $\RR$, and $(A,\circ)$ is a semigroup.
\end{corollary}

If $\Theta$ is a subgraph of a graph $H$, a \emph{$\Theta$-bridge} of $H$ is either an edge of $H$
not belonging to $\Theta$ and with both ends in $\Theta$, or a connected component of $H-V(\Theta)$ together with all
the edges from this component to $\Theta$; and the \emph{attachments} of a $\Theta$-bridge are its
vertices in $\Theta$.  Moreover, for a set of vertices $X\subseteq V(H)$, an \emph{$X$-bridge} is a $\Theta$-bridge for the
edgeless graph with vertex set $X$.
We are going to consider the following properties, for a plane framed graph $G$ with a fixed signature $\sigma$.

\begin{enumerate}[(P1)]
\item\label{prop:bridge} For vertices $x,y\in L(G)\cup R(G)$, the property ``there exists an $(L(G)\cup R(G))$-bridge of $G$ containing both $x$ and $y$;
i.e., $x$ and $y$ are connected by a path in $G$ whose internal vertices do not belong to $L(G)\cup R(G)$''.
An abstractifier for this property in plane framed graphs of signature $\sigma=(p,q)$ can be defined as follows.
\begin{itemize}
\item $\rho$ is the function that just forgets the drawing of the plane framed graph $G$.
\item Let us fix pairwise disjoint tuples $L_0=(l_1,\ldots, l_p)$, $R_0=(r_1,\ldots,r_p)$, and $C=(c_1,\ldots, c_q)$ of vertices and
let $U$ be the set of vertices in $L_0$, $R_0$, and $C$.  Let $A$ be the set of all (up to isomorphism)
bipartite framed graphs $H'$ with $U\subseteq V(H')$, $L_0(H')=L_0$, $R_0(H')=R_0$, and $C(H')=C$, such that
both $U$ and $Z=V(H')\setminus U$ are independent sets in $H'$ and the vertices of $Z$ have pairwise different neighborhoods in $U$.
In particular, we have $|A|=2^{2^{2p+q}}$.
\item For a framed graph $H$ of signature $\sigma$, let $\pi$ be the function mapping the vertices of $L_0$ to $L_0(H)$,
the vertices of $R_0$ to $R_0(H)$, and the vertices of $C$ to $C(H)$ in order.  Let $f(H)=H'$,
where $H'\in A$ is the graph such that for every $U'\subseteq U$, $H'$ has a vertex $z\in Z$ with neighborhood~$U'$ if and only
of $H$ contains an $L(H)\cup R(H)$-bridge with attachments $\pi(U')$.  
\item Let $A_\RR$ consist of the set of the framed graphs $H\in A$ such that
there exists an $(L(H)\cup R(H)$-bridge of $H$ containing both $\pi^{-1}(x)$ and $\pi^{-1}(y)$.
\item For framed graphs $a_1,a_2\in A$, we define $a_1\circ a_2=f(a_1\oplus a_2)$.
\end{itemize}
\item\label{prop:contY} For a set $Y$ of vertices (of a supergraph of $G$), the property ``$Y\cap V(G)\subseteq C(G)$''.
The function $\rho$ just forgets the drawing.  We let $A=\{0,1\}$, and for a framed graph $H$, we let $f(H)=1$
if $Y\cap V(H)\subseteq C(H)$.  We let $A_\RR=\{1\}$.  Finally, we define $a_1\circ a_2=a_1a_2$.

\item\label{prop:sepcyc} Let $x$ and $y$ be distinct points in the plane, and let $\gamma$ be a simple curve between them.
We say that $\gamma$ is \emph{$G$-normal} if the drawing of $G$ is disjoint from $x$ and $y$,
$\gamma$ does not pass through vertices of $G$, $\gamma$ intersects the drawing of any edge $e$ of $G$ at most once,
and if it does intersect it once, then it does so transversally (and thus passes from one side of the edge to the other side).
We next consider the property ``$\gamma$ is $G$-normal and  $G$ contains a cycle separating $x$ from $y$''.
Note that the assumption of $G$-normality is just a technicality that simplifies the definition of the abstractifier.

The abstractifier is based on the following observation:  Suppose that $\gamma$ is $G$-normal and let $G'$ be the graph
obtained from $G$ by subdividing each edge not intersecting $\gamma$ once.  Then $G$ contains a cycle separating $x$ from $y$
if and only if $G'$ contains an odd-length cycle.  Indeed, a cycle~in~$G$ separates $x$ from $y$ if and only if it intersects $\gamma$
odd number of times, which is equivalent to the corresponding cycle in $G'$ having an odd length.
Moreover, $G'$ contains an odd-length cycle if and only if it contains an odd-length closed walk.
An abstractifier for this property thus can be defined as follows.
\begin{itemize}
\item Similarly to the property~\ref{prop:bridge},
let us fix pairwise disjoint tuples $L_0=(l_1,\ldots, l_p)$, $R_0=(r_1,\ldots,r_p)$, and $C=(c_1,\ldots, c_q)$ of vertices and
let $U$ be the set of vertices in $L_0$, $R_0$, and $C$. Let $A'$ be the set consisting of a special element $\bot$
and of all (up to isomorphism) framed graphs $G'$ with $U\subseteq V(G')$, $L_0(G')=L_0$, $R_0(G')=R_0$, and $C(G')=C$, such that
$Z=V(G')\setminus U$ is an independent set of vertices of degree two with pairwise different neighborhoods in $U$.
Let us remark that $G'$ can also contain edges between vertices of $U$.
In particular, we have $|A'|=1+2^{2\binom{2p+q}{2}}$.
\item Let $\pi$ be as before.  We define $\rho(G)=\bot$ if $\gamma$ is not $G$-normal.  Otherwise,
let $G'$ be the framed graph obtained from $G$ by subdividing once every edge that does not cross $\gamma$. 
Then, let $\rho(G)$ be the graph in $A'$ such that distinct vertices $u,v\in U$ are adjacent precisely when
$G'$ contains an odd-length walk from $\pi(u)$ to $\pi(v)$, and have a common neighbor in $Z$
if and only if $G'$ contains an even-length walk from $\pi(u)$ to $\pi(v)$.
\item The function $f$ is defined analogously, letting $G'=G$.  We let $A\subseteq A'$ be the image of $f$,
so that $f$ is surjective.
\item Let $A_\RR$ consist of the framed graphs in $A$ containing an odd-length cycle.
\item For framed graphs $a_1,a_2\in A$, we define $a_1\circ a_2=f(a_1\oplus a_2)$.  For every $a\in A$,
we define $a\circ\bot=\bot\circ a=\bot$.
\end{itemize}
\end{enumerate}

Let us note that given composable functions determining different properties, we can
naturally take their product, obtaining the following conclusion.

\begin{observation}\label{obs-proprod}
Let $a$ and $b$ be non-negative integers.  For $i\in \{1,\ldots,k\}$, let $\RR_i$ be a drawing-sensitive property of
plane framed graphs of signature $\sigma$ and let $f_i$ be a function with range $A_i$ determining $\RR_i$,
composable via an operation $\circ_i$.
Let $A=A_1\times \cdots\times A_k$ and $\circ=\circ_1\times \cdots\times \circ_k$,
and let $f$ be the function that to each plane framed graph $G$ of signature $\sigma$ assigns
the element $(f_1(G),\ldots,f_k(G))$ of $A$.  Then $f$ is a function with range $A$ determining all of the drawing-sensitive properties
$\RR_1$, \ldots, $\RR_k$ and composable via $\circ$.
\end{observation}
Of course, if $(A_1,\circ_1)$, \ldots, $(A_k,\circ_k)$ are semigroups, then so is $(A,\circ)$.
Finally, let us make a simple observation on a property implied by \ref{prop:bridge}.
\begin{observation}\label{obs-bridge-to-comp}
Let $G_1$ and $G_2$ be framed graphs with the same signature.  For each vertex $x\in L(G_1)\cup R(G_1)$, let $x'$ denote the
vertex of $L(G_2)\cup R(G_2)$ at the same position in the tuples.  Suppose that every pair $x,y\in L(G_1)\cup R(G_1)$ of vertices
satisfies \ref{prop:bridge} in $G_1$ if and only if the vertices $x'$ and $y'$ satisfy \ref{prop:bridge} in $G_2$.
If vertices $u,v\in L(G_1)\cup R(G_1)$ are joined in $G_1$ by a path with internal vertices not belonging to $C(G_1)$,
then the vertices $u'$ and $v'$ are joined in $G_2$ by a path with internal vertices not belonging to $C(G_2)$.
\end{observation}

\subsection{Linked and framed path decompositions}

Let us now introduce a few definitions concerning path decompositions, with the aim of applying the notions
presented in the previous section to their bags.

Let $(P,\beta)$ be a path decomposition of a graph $G$.  The \emph{order} of the decomposition is $|V(P)|$.
The path decomposition is \emph{proper} if $V(\beta(x))\not\subseteq V(\beta(y))$ holds for all distinct nodes $x,y\in V(P)$.
The \emph{adhesion} of the path decomposition is the maximum of $|V(\beta(x)\cap\beta(y))|$ over all pairs of distinct nodes $x,y\in V(P)$.
Let us remark that in both of these definitions, it suffices to consider adjacent nodes $x$ and $y$.

Let $s$ denote the first node and $t$ the last node of $P$.
The \emph{interior width} of the decomposition is the maximum of $|V(\beta(x))|-1$ over all
nodes $x$ of $P$ different from $s$ and $t$.
For $x\in V(P)\sem\{s\}$, let $l(x)$ be the node of $P$ preceding $x$, and let $L(x)=V(\beta(l(x))\cap \beta(x))$.
Similarly, for $x\in V(P)\sem\{t\}$, let $r(x)$ be the node of $P$ following $x$, and let $R(x)=V(\beta(r(x))\cap \beta(x))$.
The path decomposition is \emph{$p$-linked} if $|L(x)|=p$ for all $x\in V(P)\setminus\{s\}$
and $G$ contains $p$ pairwise vertex-disjoint paths from $R(s)$ to $L(t)$.
Note that this implies that for every $x\in V(P)\setminus\{s,t\}$, the subgraph $\beta(x)$ contains $p$ pairwise vertex-disjoint paths from $L(x)$ to $R(x)$.

The notion of linkedness of a path decomposition plays an important role in many arguments in
the graph minor theory.  The following standard observation is that any (proper) path decomposition can be restructured
into a linked one, while keeping control over its order and interior width (though we can lose the control
over width, since a large part of the decomposition may need to be included in the bags of $s$ and $t$).
A path decomposition $(P',\beta')$ of a graph $G$ is a \emph{coarsening} of a path decomposition $(P,\beta)$ if $P'=y_1\ldots y_m$ and $P$ can be expressed
as the concatenation of paths $P_1$, \ldots, $P_m$ such that $\beta'(y_i)=\bigcup_{x\in V(P_i)} \beta(x)$ for each $i\in\{1,\ldots, m\}$.
For a subpath $Q\subseteq P$, the \emph{restriction} of the path decomposition $(P,\beta)$ to $Q$ is the coarsening
$(Q,\beta')$ of $(P,\beta)$ such that $\beta'(x)=\beta(x)$ for all nodes $x$ of $Q$ distinct from its endpoints $s_Q$ and $t_Q$;
note that $\beta'(s_Q)$ is the union of the bags of the nodes of $P$ preceding or equal to $s_Q$,
and $\beta'(t_Q)$ is the union of the bags of the nodes of $P$ following or equal to $t_Q$.

\begin{lemma}\label{lemma-link}
Let $a$ and $w$ be non-negative integers and let $f_0:\mathbb{N}\to\mathbb{N}$ be an arbitrary non-decreasing function.
There exist integers $w_0$ and $n_0$ such that the following holds.
If a graph $G$ has a proper path decomposition of interior width at most $w$, adhesion at most
$a$, and order at least $n_0$, then for some $w'\le w_0$ and $p\le a$, the graph $G$ also has a $p$-linked proper path decomposition of
interior width at most $w'$ and order at least $f_0(w')$.
\end{lemma}
\begin{proof}
Let $(P,\beta)$ be a proper path decomposition of $G$ of interior width at most $w$ and adhesion at most $a$.
We prove the claim by induction on $a$.  If $a=0$, then $(P,\beta)$ is $0$-linked, and thus the claim holds with
$w_0=w$ and $n_0=f_0(w)$.
Hence, assume that $a\ge 1$.  Let $w_0'$ and $n'_0$ be the values of $w_0$ and $n_0$ for the inductive application of Lemma~\ref{lemma-link}
for $a-1$ and with the interior width of the decomposition bounded by $(2f(w)+1)$ rather than $w$.  Let $w_0=\max(w'_0,w)$ and $n_0=(2n'_0+4)f_0(w)+1$.

We say that a node $x$ of $P$ distinct from its endpoints is \emph{$a$-linked} if
$|L(x)|=|R(x)|=a$ and $\beta(x)$ contains $a$ pairwise vertex-disjoint paths from $L(x)$ to $R(x)$, and \emph{broken} otherwise.  By Menger's theorem,
if $x$ is broken, then there exist subgraphs $A_x,B_x\subseteq \beta(x)$
such that $A_x\cup B_x=\beta(x)$, $L(x)\subseteq V(A_x)$, $R(x)\subseteq V(B_x)$ and $|V(A_x\cap B_x)|\le a-1$;
that is, $V(A_x\cap B_x)$ is a cut of size at most $a-1$ separating $L(x)$ from $R(x)$ in $\beta(x)$.

If $P$ contains a subpath $Q$ of $f_0(w)$ consecutive $a$-linked nodes,
then the restriction of $(P,\beta)$ to $Q$ is an $a$-linked proper path decomposition of interior width at most $w$
and order at least $f_0(w)$.

Otherwise, at least one of each $f_0(w)$ consecutive internal nodes of $P$ is broken.  Let $x_0$, \ldots, $x_n$ be a maximal sequence
of broken nodes in order they appear on $P$, and observe that the distance between consecutive elements of this sequence
in $P$ is at most $f_0(w)$, and so are the lengths of the initial segment preceding $x_0$ and the final segment following $x_n$.
In particular, we have $n\ge \tfrac{n_0-1}{f_0(w)}-2\ge 2n'_0+2$.

Let $(P',\beta')$ be the path decomposition of $G$ obtained from $(P,\beta')$ by, for $i\in\{0,\ldots,n'_0\}$,
replacing the node $x_{2i+1}$ by nodes $a_i$ and $b_i$ with bags $\beta'(a_i)=A_{x_{2i+1}}$ and $\beta'(b_i)=B_{x_{2i+1}}$.
Let $(P'',\beta'')$ be the coarsening of $(P',\beta')$ with $P''=z_0z_1\ldots z_{n'_0+1}$, with
the node $z_0$ corresponding to the initial segment of $P'$ till $a_0$,
the node $z_i$ corresponding to the segment of $P'$ between $b_{i-1}$ and $a_i$ for $i\in \{1,\ldots, n'_0\}$, and
the node $z_{n'_0+1}$ corresponding to the final segment of $P'$ starting in $b_{n'_0}$.

Observe that the path decomposition $(P'',\beta'')$ is proper, since for distinct $i_1,i_2\in \{0,\ldots, n'_0+1\}$, we have
$\beta(x_{2i_1})\subseteq \beta''(z_{i_1})$ and $\beta(x_{2i_2})\subseteq \beta''(z_{i_2})$, and
the sets $V(\beta(x_{2i_1}))$ and $V(\beta(x_{2i_2}))$ are incomparable since the path decomposition $(P,\beta)$ is proper.
Since the distance between the consecutive elements of $x_0$, \ldots, $x_n$ in $P$ is at most $f_0(w)$ and the path decomposition $(P,\beta)$
has interior width at most $w$, we have $|V(\beta''(z_i))|\le (2f(w)+1)w$ for $i\in \{1,\ldots, n'_0\}$, and thus
the path decomposition $(P'',\beta'')$ has interior width at most $(2f(w)+1)w$.   Moreover, it clearly has adhesion at most $a-1$
and order at least $n'_0$.

By the induction hypothesis applied to $(P'',\beta'')$, we conclude that for some $w'\le w'_0\le w_0$ and $p\le a-1<a$,
the graph $G$ has a $p$-linked proper path decomposition of interior width at most $w'$ and order at least $f_0(w')$.
Hence, the conclusion of the lemma holds.
\end{proof}

Consider a $p$-linked path decomposition $(P,\beta)$ of a graph $G$, and let $s$ and~$t$ be the ends of $P$.
We would like to consider the bag $\beta(x)$ of each node $x\in V(P)\setminus\{s,t\}$
as a framed graph with boundary $L(x)\cup R(x)$, so that we can combine consecutive ones using the composition operation.
A technical issue preventing us from doing so is that the intersection $L(x)\cap R(x)$ can be different for each $x\in V(P)\setminus\{s,t\}$.
However, this is easily fixed.  We say that a path decomposition $(P,\beta)$ is \emph{$p$-framed} if it is $p$-linked and there exists a set $C\subseteq V(G)$
such that for every node $x\in V(P)\setminus\{s,t\}$, we have $L(x)\cap R(x)=C$.  We say that $C$ is the \emph{core} of the decomposition.

\begin{lemma}\label{lemma-framed}
Let $p$ and $w$ be non-negative integers and let $f:\mathbb{N}\to\mathbb{N}$ be an arbitrary non-decreasing function.
There exist integers $w_1$ and $n_1$ such that the following holds.
If a graph $G$ has a $p$-linked proper path decomposition $(P,\beta)$ of interior width at most $w$
and order at least $n_1$, then for some $w'\le w_1$, the graph $G$ also has a $p$-framed proper path decomposition of
interior width at most $w'$ and order at least $f(w')$.
\end{lemma}
\begin{proof}
Let $s$ and $t$ be the ends of $P$, and let $C=R(s)\cap L(t)$ and $c=|C|$.  We prove the claim by reverse induction on $c$.
For $c=p$, let $w'=w_1=w$ and $n_1=f(w)$.  Note that if $|C|=p$, then $L(x)\cap R(x)=C$ for every $x\in V(P)\setminus\{s,t\}$,
and thus the path decomposition $(P,\beta)$ is $p$-framed with the core $C$ and the conclusion of the lemma holds.
Hence, we can assume that $c<p$.  Let $w'_1$ and $n'_1$ be the values of $w_1$ and $n_1$ from the induction hypothesis
for $c+1$, and let $w_1=\max(w'_1, (w+2p)n'_1)$ and $n_1=n'_1f(w_1)$.

Let $Q_1$, \ldots, $Q_p$ be pairwise vertex-disjoint paths from $R(s)$ to $L(t)$ in~$G$, which exist by the $p$-linkedness of
the path decomposition; we choose the labeling so that $Q_1$, \ldots, $Q_c$ are single-vertex paths consisting of the vertices of $C$.
Suppose first that there exist a subpath $S_1\subseteq P$ with at least $n'_1$ nodes and an integer $i\in\{c+1,\ldots, p\}$
such that, letting $s'$ and $t'$ be the ends of $R_1$, we have $R(s')\cap V(Q_i)=L(t')\cap V(Q_i)$.  In this case,
let $(S_1,\beta_1)$ be the restriction of $(P,\beta)$ to $S_1$, and note that this path decomposition satisfies $|R(s')\cap L(t')|\ge c+1$.
By the induction hypothesis applied to $(S_1,\beta_1)$, we conclude that
for some $w'\le w'_1\le w_1$, the graph $G$ has a $p$-framed proper path decomposition of interior width at most $w'$ and order at least $f(w')$.

Suppose now that this is not the case for any subpath of $P$ with at least $n'_1$ nodes.
Let $w'=(w+2p)n'_1\le w_1$ and let $P_1$, \ldots, $P_{f(w'})$ be the partition of $P$ into pairwise vertex-disjoint
paths, where all except for $P_1$ have exactly $n'_1$ nodes and $P_1$ has at least $n'_1$ nodes;
this partition exists since $n_1\ge n'_1f(w')$.  Let $(S_2,\beta_2)$ be the corresponding
coarsening of $(P,\beta)$, with $S_1=y_1\ldots f_{f(w')}$, and note that this path decomposition
has interior width at most $w'$.  For $j\in \{2,\ldots,f(w')-1\}$ and $i\in \{1,\ldots,p\}$,
we have $L(y_j)\cap V(Q_i)=R(y_j)\cap V(Q_i)=C\cap V(Q_i)$ if $i\le c$ and $L(y_j)\cap V(Q_i)\neq R(y_j)\cap V(Q_i)$ if $i>c$.
Therefore, the path decomposition $(S_2,\beta_2)$ is $p$-framed with the core $C$.  Hence, the conclusion of the lemma holds.
\end{proof}

Let us now combine the two lemmas.
\begin{corollary}\label{cor-framed}
Let $w$ be a non-negative integer and let $f:\mathbb{N}\to\mathbb{N}$ be an arbitrary non-decreasing function.
There exist integers $w_2$ and $n_2$ such that the following holds.
If a graph $G$ has a proper path decomposition $(P,\beta)$ of interior width at most $w$
and order at least $n_2$, then for some $w'\le w_2$ and $p\le w$, the graph $G$ also has a $p$-framed proper path decomposition of
interior width at most $w'$ and order at least $f(w')$.
\end{corollary}
\begin{proof}
For each positive integer $w''$, let us define $s(w'')$ and $f_1(w'')$ as the maximum of the values of $w_1$ and $n_1$,
respectively, from Lemma~\ref{lemma-framed} applied for all non-negative integers $p\le w$, the integer $w=w''$ and the function $f$.
Let $w_0$ and $n_0$ be the integers from Lemma~\ref{lemma-link} applied for $a=w$, $w$, and $f_1$.
Let $n_2=n_0$ and let $w_2$ be the maximum of $s(w'')$ over all non-negative integers $w''\le w_0$.

Note that the adhesion of $(P,\beta)$ is bounded by its interior width $w$.
By Lemma~\ref{lemma-link} applied for $a=w$, $w$, $f_1$, and the path decomposition $(P,\beta)$,
for some non-negative integers $w''\le w_0$ and $p\le w$, the graph $G$ has a $p$-linked proper path decomposition $(P_1,\beta_1)$ of
interior width at most $w''$ and order at least $f_1(w'')$.  By Lemma~\ref{lemma-framed} applied for this $p$,
$w=w''$, $f$, and the path decomposition $(P_1,\beta_1)$,
for some $w'\le s(w'')\le w_2$, the graph $G$ also has a $p$-framed proper path decomposition of interior width at most $w'$
and order at least $f(w')$.
\end{proof}

Let $(P,\beta)$ be a $p$-framed path decomposition of a graph $G$, and let $s$ and~$t$ be the ends of $P$.
Let $c=|R(s)\cap L(t)|$.  A $p$-tuple $(Q_1,\ldots, Q_p)$ of pairwise vertex-disjoint paths from $R(s)$ to $L(t)$ in $G$ is a \emph{frame}
if the paths $Q_{p-c+1}$, \ldots, $Q_p$ are the single-vertex paths formed by the vertices of $R(s)\cap L(t)$.
Suppose a frame is fixed.  Then we can turn the bags of the path decomposition $(P,\beta)$ into framed graphs in the natural way;
let us give a more general definition, associating a framed graph with each subpath $P'$ of $P-\{s,t\}$:
Let $s'$ and $t'$ be the ends of $P'$.  Let $\beta[P']$ be the framed graph with the underlying graph $\bigcup_{x\in V(P')} \beta(x)$,
$L_0(\beta[P'])$ consisting of the vertices of $Q_1$, \ldots, $Q_{p-c}$ in $L(s')$ in order,
$R_0(\beta[P'])$ consisting of the vertices of $Q_1$, \ldots, $Q_{p-c}$ in $R(t')$ in order,
and $C(\beta[P'])$ consisting of the vertices forming the paths $Q_{p-c+1}$, \ldots, $Q_p$ in order.
In case that $G$ is a plane graph, then $\beta[P']$ inherits the plane drawing from $G$.
Note that if $P_1$ and $P_2$ are consecutive vertex-disjoint subpaths of $P-\{s,t\}$ and $P_3=P[V(P_1)\cup V(P_2)]$, then the (plane) framed graphs $\beta[P_1]$
and $\beta[P_2]$ are compatible and $\beta[P_3]=\beta[P_1]+\beta[P_2]$.

\subsection{Applying Simon's factorization forest}

Finally, we are going to need an algebraic tool from the semigroup theory.
Let $T$ be a rooted ordered tree, where by \emph{ordered}, we mean that the order of children of each vertex is fixed.
Let $f$ be a function assigning to each vertex of $T$ a string, such that
\begin{itemize}
\item for each leaf $v$ of $T$, the string $f(v)$ has length exactly one, and
\item for each non-leaf vertex $v$ of $T$, the string $f(v)$ is the concatenation of the strings assigned by $f$ to the children of $v$ in order.
\end{itemize}
We say that the pair $(T,f)$ \emph{yields the string} assigned to the root of $T$ by $f$.  
If the letters of the strings are elements of a semigroup $A$, 
then for each $v\in V(T)$, let $f_A(v)$ denote the product of the letters of $f(v)$
in~$A$.
An element $e$ of $A$ is \emph{idempotent} if $e^2=e$.  
The pair $(T,f)$ is an \emph{$A$-factorization tree}
if for every vertex $v$ of $T$ with more than two children, there exists an idempotent element $e\in A$ such that $f_A(x)=e$ for
each child $x$ of $v$ (and hence also $f_A(v)=e$). 
Simon~\cite{simon1990factorization} showed existence of bounded-depth $A$-factorization trees for every string;
the improved bound in the following theorem was proved by
Colcombet~\cite{colcombet2010factorization}.  Recall that the \emph{depth} of a rooted tree is the maximum length (number of edges)
of a path from its root to a leaf.

\begin{theorem}[Simon~\cite{simon1990factorization};
	Colcombet~\cite{colcombet2010factorization}]\label{thm-facttree}
For every finite semigroup $A$ and each string of elements of $A$, there
exists an $A$-factorization tree of depth at most $3|A|$ yielding this string.
\end{theorem}

We combine Theorem~\ref{thm-facttree} with the following easy observation, asserting that if $T$ is large,
then there necessarily exists a large-degree vertex $v\in V(T)$ such that all the subtrees below $v$ are relatively small.

\begin{lemma}\label{lemma-bigvert}
Let $f:\mathbb{N}\to\mathbb{N}$ be an arbitrary non-decreasing function and let $d$ be a positive integer.
There exist integers $k_0$ and $n_0$ such that the following statement holds.  
If $T$ is a rooted tree of depth at most $d$ with at least $n_0$
leaves, then for some $k\le k_0$, there exists a vertex $v$ of $T$ that has at least $f(k)$ children,
while the subtree of $T$ rooted at each child of $v$ has at most $k$ leaves.
\end{lemma}

\begin{proof}
We prove the claim by induction on $d$.  For $d=1$, it suffices to set $k_0=1$ and $n_0=f(1)$.  Suppose that $d\ge 2$ and Lemma~\ref{lemma-bigvert} holds
for $d-1$, and let $k'_0$ and $n'_0$ denote the values of $k_0$ and $n_0$ for this inductive application.  Let $k_0=\max(k'_0,n'_0)$ and $n_0=n'_0f(n'_0)$.

Let $T$ be rooted tree of depth at most $d$ with at least $n_0$ leaves.
If the subtree of $T$ rooted at a child of the root has at least $n'_0$ leaves, then the claim follows by the
induction hypothesis applied to this subtree.  Otherwise, the root has at least $n_0/n'_0\ge f(n'_0)$ children, and the subtree rooted
in each of them has at most $n'_0$ leaves.  Hence, we can let $v$ be the root and $k=n'_0$.
\end{proof}

For our application, we only need the following corollary obtained by combining Theorem~\ref{thm-facttree} and Lemma~\ref{lemma-bigvert}.

\begin{corollary}\label{cor-manyidem}
Let $\alpha$ be a positive integer and let $f:\mathbb{N}\to\mathbb{N}$ be an arbitrary non-decreasing function.
There exist integers $k_0$ and $n_0$ such that if $(A,\circ)$ is a finite semigroup of size at most $\alpha$
and $s$ is a string of elements of $A$ of length at least $n_0$,
then $s$ is the concatenation of strings $s_0,s_1,\ldots, s_m,s_{m+1}$ for some integer $m$, such that
\begin{itemize}
\item there exists a positive integer $k\le k_0$ such that $m\ge f(k)$ and the strings $s_1$, \ldots, $s_m$ have length at least one but at most $k$, and
\item the product of the elements of $A$ in each of the strings $s_1$, \ldots, $s_m$ is the same idempotent element of $A$.
\end{itemize}
\end{corollary}

\begin{proof}
Without loss of generality, we can assume that $f(k)>2$ for every positive integer $k$.
Let $k_0$ and $n_0$ be the values from Lemma~\ref{lemma-bigvert} for the function $f$ and for $d=3\alpha$.
By Theorem~\ref{thm-facttree}, there exists an $A$-factorization tree $(T,f)$ yielding $s$ of depth at most $d$.
By Lemma~\ref{lemma-bigvert}, there exists a positive integer $k\le k_0$ and a vertex $v\in V(T)$ with at least $f(k)$ children,
such that the subtree of $T$ rooted at each child of $v$ has at most $k$ leaves.
Let $v_1$, \ldots, $v_m$ be the children of $v$ in order, and for $i\in \{1,\ldots,m\}$, let $s_i=f(v_i)$.
Let $s_0$ and $s_{m+1}$ be the prefix of $s$ preceding $s_1$ and succeeding $s_m$, respectively.

By the choice of $v$, each of the strings $s_1$, \ldots, $s_m$ has length at most $k$,
but the number $m$ of children of $v$ is at least $f(k)$.  Moreover, since $(T,f)$ is an $A$-factorization tree and $m\ge f(k)>2$, we conclude
that for each $i\in \{1,\ldots,m\}$, the product $f_A(v_i)$ of the elements of $A$ in the string $s_i$ is equal to the idempotent element $f_A(v)$.
\end{proof}

Let $(P,\beta)$ be a $p$-framed path decomposition of a plane graph $G$,
and let us fix a frame $\varphi=(Q_1,\ldots,Q_p)$.  Let $s$ and $t$ be the ends of $P$.  
Let $\varphi_0=(Q'_1,\ldots,Q'_p)$ be a $p$-tuple of pairwise vertex-disjoint paths,
where for $i\in \{1,\ldots,p\}$, we have $Q_i\subseteq Q'_i$,
the initial segment of $Q'_i$ till the start of $Q_i$ is contained in $\beta(s)$,
and the final segment of $Q'_i$ from the end of $Q_i$ is contained in $\beta(t)$; we say that $\varphi_0$ is a \emph{superframe} of $\varphi$.
Let $Y$ be a set of vertices of $G$.
We say that $(P,\beta)$ is \emph{uniform with respect to the relevant properties for $Y$ and $\varphi_0$} if
there exists 
\begin{itemize}
\item a finite semigroup $(A,\circ)$ and a function $f$ with range $A$ determining the properties
\begin{itemize}
\item \ref{prop:bridge} for each pair of boundary vertices,
\item \ref{prop:contY} for a given subset $Y$ of the vertices of $G$, and
\item \ref{prop:sepcyc} for arbitrarily chosen $G$-normal curves $\gamma_i$ for $i\in\{1,\ldots, p\}$,
where $\gamma_i$ starts in a face of $G$ incident with one end of $Q'_i$ and ends in a face of $G$ incident with the other end of $Q'_i$,
\end{itemize}
where $f$ is composable via $\circ$, and
\item an idempotent element $a_0\in A$ such that $f(\beta[x])=a_0$ for every internal node $x$ of $P$.
\end{itemize}
Note that this also implies that $f(\beta[P'])=a_0$ for every subpath $P'$ of $P-\{s,t\}$.
By combining the results obtained so far, we obtain the following theorem.

\begin{theorem}\label{thm-eqdec}
Let $w$ be a non-negative integer, and let $h:\mathbb{N}\to\mathbb{N}$ be an arbitrary non-decreasing function.
There exist integers $w_3$ and $n_3$ such that the following claim holds.
Let $G$ be a plane graph and let $Y$ be a set of vertices~of~$G$.  If $G$ has a proper path decomposition of interior width at most $w$ 
and order at least $n_3$, then
for some $w'\le w_3$ and $p\le w$, it also has a $p$-framed proper path decomposition $(P'',\beta'')$
of interior width at most $w'$ and order at least $h(w')$ which is uniform with respect to the relevant properties for $Y$
and a superframe of $(P'',\beta'')$.
\end{theorem}
\begin{proof}
By Corollary~\ref{cor-semi} and Observation~\ref{obs-proprod},
for any signature $\sigma$ and a finite set $\Gamma$ of simple curves in the plane,
there exists a finite semigroup $(A_{\sigma,\Gamma},\circ_{\sigma,\Gamma})$
and a function $f_{\sigma,\Gamma}$ with range $A_{\sigma,\Gamma}$ determining the properties \ref{prop:bridge}, \ref{prop:contY}
for the set $Y$, and \ref{prop:sepcyc} for the curves in $\Gamma$, and composable via $\circ_{\sigma,\Gamma}$.
Moreover, observe that there exists a function $\alpha:\mathbb{N}^2\to\mathbb{N}$ such that the size of $A_{\sigma,\Gamma}$ is
at most $\alpha(|\sigma|,|\Gamma|)$ for every $\sigma$ and $\Gamma$.

For each positive integer $w''$, let us define $k(w'')$ and $f_1(w'')$ as the maximum of the values $k_0$ and $n_0$,
respectively, from Corollary~\ref{cor-manyidem} applied for all non-negative integers $i\le w$ with $\alpha(i,i)$ playing the role of $\alpha$
and with the function $f(k)=h(w''k)$.
Let $w_2$ and $n_2$ be the values from Corollary~\ref{cor-framed} applied for $w$, and the function $f(w'')=f_1(w'')+2$.
Let $n_3=n_2$ and let $w_3$ be the maximum of $w''k(w'')$ over all non-negative integers $w''\le w_2$.

Let us apply Corollary~\ref{cor-framed} to $(P,\beta)$, with the function $f(w'')=f_1(w'')+2$;
we obtain a $p$-framed proper path decomposition $(P',\beta')$ of $G$ of interior width at most $w''$ and order
at least $f_1(w'')+2$ for some $w''\le w_2$ and $p\le w$.  Let $\sigma$ be the signature of $(P',\beta')$ and note that
$|\sigma|=p\le w$.  Let $\varphi_0$ be a frame of $(P',\beta')$ and let $\Gamma$ consist of $G$-normal
curves for this (super)frame as in the definition of the uniformity with respect to the relevant properties.

Let $P'=s'x_1x_2\ldots x_nt'$, where $n\ge f_1(w'')$, and let $s$ be the string $$f_{\sigma,\Gamma}(\beta[x_1])\ldots f_{\sigma,\Gamma}(\beta[x_n]).$$
By Corollary~\ref{cor-manyidem} for $(A_\sigma,\circ)$ and the function $f(k)=h(w''k)$, there exist integers
$k\le k_0\le k(w'')$ and $m\ge h(w''k)$ such that $s$ is the concatenation of strings $s_0$, $s_1$, \ldots, $s_m$, $s_{m+1}$,
where $s_1$, \ldots, $s_m$ have length between $1$ and $k$ and the product of each of them is the same idempotent element $a_0\in A_{\sigma,\Gamma}$.
Let $P'_0$, $P_1$, $P_2$, \ldots, $P_m$, $P'_{m+1}$ be the subpaths of $P'-\{s',t'\}$ corresponding to these strings,
let $P_0$ be the concatenation of $s'$ and $P'_0$, and let $P_{m+1}$ be the concatenation of $P'_{m+1}$ and $t$.

Let $(P'',\beta'')$ be the coarsening of $(P',\beta')$ according to the subpaths $P_0$, \ldots, $P_{m+1}$, and let $s''$ and $t''$ be the ends of $P''$.
We have $f_{\sigma,\Gamma}(\beta[x])=a_0$ for every $x\in V(P'')\setminus\{s'',t''\}$, and thus the $p$-framed path decomposition $(P'',\beta'')$
is uniform with respect to the relevant properties for $Y$ and the superframe $\varphi_0$ of $(P'',\beta'')$.
Let $w'=w''k\le w''k(w'')\le w_3$.  Since the interior width of $(P',\beta')$ is at most $w''$, the interior width of $(P'',\beta'')$
is at most $w''k=w'$.  Moreover, $(P'',\beta'')$ has order at least $m\ge h(w''k)=h(w')$.
\end{proof}

In other words, we can find a long path decomposition in which all relevant properties of the drawing
that hold in one bag repeat in all the bags.  So, for example, if \ref{prop:sepcyc} holds for one bag (for some curve $\gamma$),
then it holds in every bag, and we conclude that the drawing contains many cycles separating the ends of $\gamma$.
If we can additionally argue that many of these cycles are (nearly) disjoint, we obtain a large nest,
which is quite useful in view of Theorem~\ref{thm-nestdepth}.  The precise statement arising from this idea
is stated and proven in the following section.

\section{Bands and fans of plane tiles}
\label{sec:struc-tiles}

The proof of our structural characterization of crossing-critical graphs can
be roughly divided into two main parts.
In the first one, presented in this section, we establish the existence
of a plane band or fan in each crossing-critical graph,
where the band or fan is subdivided into short segments (called \emph{tiles})
and the band or fan is arbitrarily long compared to the size of the tiles.
This key result is summarized below in Corollary~\ref{cor-tiles}.

In particular, we can require the band or fan to be so long that the pigeonhole principle
implies that some of the tiles must repeat many times, even including their
bounded-length neighborhood.
In the the second part, presented in Section~\ref{sec:shorten-band}, we analyze
this situation more closely, defining reduction and expansion operations and showing
that when applied to such a ``prolific'' tile, they both preserve $c\,$-crossing-criticality.

In the traditional ``bottom-up'' approach (e.g., \cite{bokal2016characterizing}),
the allowed types of the tiles are fixed in advance and one then needs
to prove that (at least the central part of) every sufficiently long band in the considered $c\,$-crossing-critical
graph can be completely partitioned into such tiles.  As we cannot define allowed tile types explicitly,
we define tiles in a less precise, ``top-down'' fashion, where we divide the band (or fan)
into short segments somewhat arbitrarily. The interesting tile types are then only identified
based on their prolificity.  In particular, tiles in our sense might actually correspond to combinations of
several allowed tiles (or their parts) in the traditional sense.

\begin{figure}
\begin{center}
\begin{tikzpicture}[scale=0.75]
\tikzstyle{every node}=[draw, shape=circle, minimum size=2pt,inner sep=1pt, fill=black]
\tikzstyle{every path}=[fill=white, thin]
\draw[fill=black!20] (0,0) -- (0,2.98) -- (10.45,2.98) -- (10.45,0.02) -- (0,0.02) ;
\draw[fill=black!10] (2.1,0.02) -- (4.2,0.02) -- (4.2,2.98) -- (2.1,2.98) -- (2.1,0) ;
\draw[fill=black!10] (6.3,0.02) -- (8.4,0.02) -- (8.4,2.98) -- (6.3,2.98) -- (6.3,0) ;
\draw (0,-0.02) arc (100:80:30) ;
\draw (0,3.02) arc (-100:-80:30) ;
\node[label=below:$v_1$,label=above:$P_1\quad~$] (v1) at (0,0) {};
\node[label=below:$v_2$,label=above:$P_2\quad~~$] (v2) at (2.1,0.25) {};
\node[label=below:$v_3$,label=above:$P_3\quad~$] (v3) at (4.2,0.4) {};
\node[label=below:$v_4$,label=above:$P_4\quad~~$] (v4) at (6.3,0.4) {};
\node[label=below:$v_5$,label=above:$P_5\quad~$] (v5) at (8.4,0.25) {};
\node[label=below:$v_6$,label=above:$P_6\quad~$] (v6) at (10.45,0) {};
\node[label=above:$u_6$] (u6) at (0,3) {};
\node[label=above:$u_5$] (u5) at (2.1,2.75) {};
\node[label=above:$u_4$] (u4) at (4.2,2.6) {};
\node[label=above:$u_3$] (u3) at (6.3,2.6) {};
\node[label=above:$u_2$] (u2) at (8.4,2.75) {};
\node[label=above:$u_1$] (u1) at (10.45,3) {};
\node[draw=none, fill=none] at (5.3,-0.9) {$F_1$} ;
\node[draw=none, fill=none] at (5.3,3.9) {$F_2$} ;
\tikzstyle{every path}=[fill=none, dashed, color=black!50]
\draw (v1) to [out=-20,in=-150] (v2) ;
\draw (v2) to [out=-20,in=-150] (v3) ;
\draw (v3) to [out=-30,in=-150] (v4) ;
\draw (v4) to [out=-30,in=-160] (v5) ;
\draw (v5) to [out=-30,in=-160] (v6) ;
\draw (u6) to [out=20,in=150] (u5) ;
\draw (u5) to [out=20,in=150] (u4) ;
\draw (u4) to [out=30,in=150] (u3) ;
\draw (u3) to [out=30,in=160] (u2) ;
\draw (u2) to [out=30,in=160] (u1) ;
\tikzstyle{every path}=[color=black, very thick]
\draw (v1) -- (u6) ;
\draw (v2) -- (u5) ;
\draw (v3) -- (u4) ;
\draw (v4) -- (u3) ;
\draw (v5) -- (u2) ;
\draw (v6) -- (u1) ;
\tikzstyle{every node}=[draw, shape=circle, minimum size=2pt,inner sep=1pt, fill=white]
\tikzstyle{every path}=[very thin]
\draw (-1,-0.2) -- (v1) --  (-0.7,0.4) ;
\draw (-1,3.2) -- (u6) ;
\node (x1) at (0,1.7) {};
\node (y1) at (1,0.13) {};
\node (z1) at (1.1,2.86) {};
\node (x2) at (2.1,1.9) {};
\draw (-0.8,1.5) -- (x1) -- (-0.7,2) ;
\draw (y1) -- (x1) -- (z1) -- (x2) -- (x1) ;
\node (y2) at (2.1,0.9) {};
\node (z2) at (3.1,0.37) {};
\node (t2) at (3.1,1.5) {};
\draw (z2) -- (t2) -- (x2) -- (u4) ;
\draw (y2) -- (t2) -- (u4) ;
\node (x3) at (5,1.8) {};
\node (y3) at (5.3,0.42) {};
\node (z3) at (5.3,2.58) {};
\node (x4) at (6.3,1.8) {};
\draw (y3) -- (x3) -- (z3) ;
\draw (v3) -- (x3) -- (x4) ;
\node (y4) at (6.3,0.9) {};
\node (z4) at (7.3,0.37) {};
\node (t4) at (7.3,1.5) {};
\draw (z4) -- (t4) -- (x4) -- (u2) ;
\draw (y4) -- (t4) -- (u2) ;
\node (x6) at (10.45,1.9) {};
\draw (v5) -- (x6) -- (u2) ;
\draw (11.4,-0.2) -- (v6) --  (11.2,0.4) ;
\draw (11.4,1.2) -- (x6) --  (11.2,1.8) ;
\draw (11.4,3.2) -- (u1) ;
\end{tikzpicture}
\vspace*{-3ex}
\end{center}
\caption{An example of paths $P_1,\dots,P_6$ (bold lines) forming a band of
	length $6$ with bottom face $F_1$ and top face $F_2$, see Definition~\ref{def:band-fan}.
	The five tiles of this band, as in Definition~\ref{def:tiles-support},
	are shaded in gray and the dashed arcs represent $\alpha_i$ and
	$\alpha_i'$ from that definition.}\label{fig-band-and-tiles}
\end{figure}

Let us now give the definitions needed to state our results more precisely.
See Figure~\ref{fig-band-and-tiles} for an illustration of the definitions of
a band and its tiles.  The fan case is obtained by contracting the path
between $u_1$ and $u_6$ in the boundary of $F_2$ (with $u_2$, \ldots, $u_5$
in its interior) to a single vertex $u$.  We give the definitions for plane graphs.
In general, we are going to apply them to the planarizations of optimal drawings of
crossing-critical graphs; however, the main result of this section, Theorem~\ref{thm-tiles}
applies to plane graphs in general and may be of independent interest.

\begin{definition}[band and fan] 
\label{def:band-fan}
Let $G$ be a plane graph. 
Let $F_1$ and $F_2$ be distinct faces of $G$ and for an integer $m\ge 3$, let $v_1, v_2, \ldots, v_m$,
and $u_1, u_2, \ldots, u_m$ be some of the vertices incident with
$F_1$ and $F_2$, respectively, not necessarily consecutive but listed in the clockwise cyclic order along the faces.
If $P_1$, \ldots, $P_m$ are pairwise vertex-disjoint paths in $G$ such that for $i\in \{1,\ldots,m\}$,
the path $P_i$ has ends $v_i$ and $u_{m+1-i}$, then we say that
$(P_1,\ldots,P_m)$ forms a \emph{band of length $m$} with \emph{bottom face} $F_1$ and \emph{top face} $F_2$.  
Note that for some $i\in\{1,\ldots,m\}$ the path $P_i$ may consist of only a single vertex $v_i=u_{m+1-i}$.

Let $F_1$ and $v_1, v_2, \ldots, v_m$ be as above.
If $u$ is a vertex of $G$ different from $v_1$, \ldots, $v_m$ and $P_1$, \ldots, $P_m$ are
paths in $G$ such that for $i\in \{1,\ldots,m\}$, the path $P_i$ has ends $v_i$ and $u$,
and the paths are pairwise vertex-disjoint except for their common end~$u$, then
we say that $(P_1,\ldots,P_m)$ forms a \emph{fan of length~$m$} with \emph{bottom face} $F_1$ and \emph{top vertex} $u$. 
The fan is \emph{proper} if $u$ is not incident with $F_1$.
\end{definition}

Let us remark that non-proper fans are the same as degenerate $F$-nests, and thus the
long ones are excluded by Lemma~\ref{lemma-fnest}; hence, we can in general consider only proper fans.
By a \emph{sash}, we mean a proper fan or a band.

Let us remark that one could perhaps initially hope to modify the argument from the proof of Lemma~\ref{lemma-fnest}
to also exclude sashes.  However, the key difference here is that deleting the vertices of a path of a sash does not
(necessarily) disconnect the graph.  And indeed, there actually are critical graphs containing long bands and proper fans~\cite{kochol1987construction,dmcross}.

\begin{definition}[tiles and support]
\label{def:tiles-support}
Let $\PP=(P_1,\ldots,P_m)$ be a sash of length $m\ge 3$ with bottom face $F_1$ in a 2-connected plane graph $G$, and let $v_1$, \ldots, $v_m$ be the ends
of its paths incident with $F_1$.  For $i\in\{1,\ldots, m-1\}$, let $\alpha_i$ be an arc between $v_i$ and $v_{i+1}$ drawn inside $F_1$.
Moreover, in the case that $\PP$ is a band with top face $F_2$, let $u_m$, \ldots, $u_1$ denote the ends of $P_1$, \ldots, $P_m$
incident with $F_2$, and for $\in \{1,\ldots,m-1\}$, let $\alpha'_i$ be an arc drawn between $u_{m+1-i}$ and $u_{m-i}$
in $F_2$; $\alpha'_i$ is null when $\PP$ is a fan.  Furthermore, choose the arcs to be internally disjoint.
Let $\theta_i$ be the closed curve consisting of $P_i$, $\alpha_i$, $P_{i+1}$, and $\alpha'_i$.
Let $\lambda_i$ be the connected part of the plane minus $\theta_i$ that
contains none of the paths $P_j$ ($1\leq j\leq m$) in its interior.

The subgraphs $T_1$, \ldots, $T_{m-1}$ of $G$ drawn in the closures of $\lambda_1$, \ldots, $\lambda_{m-1}$
are called \emph{tiles} of the sash (the tile $T_i$
includes $P_i\cup P_{i+1}$ by this definition).
Let $F'_i$ be the unique face of $T_i$ not contained in $\lambda_i$;
we say that the closed walk bounding $F'_i$ is the \emph{border} of the tile, and denote it by $C(T)$.
The border consists of the paths $P_i$ and $P_{i+1}$ (the \emph{left border} and the \emph{right border} of the tile),
a path $Q_i$ in the boundary of $F_1$ (the \emph{bottom border} of the tile), and a path $Q'_i$ in the boundary of $F_2$ in the case that $\PP$ is a band
(the \emph{top border} of the tile, equal to the top vertex when $\PP$ is a fan).  
The union of the tiles is the \emph{support} of the sash.
A \emph{subsash} of the sash $\PP$ consists of a contiguous 
subinterval $(P_i, P_{i+1}, \ldots, P_j)$ of $\PP$.  We say that the subsash is \emph{delimited} by the paths $P_i$ and $P_j$.

A tile $T_i$ of $\PP$ is
\begin{itemize}
\item a \emph{pearl} if $|V(P_i)|=|V(P_{i+1})|=1$, and
\item \emph{shelled} if $|V(P_i)|,|V(P_{i+1})|\ge 2$ and the border of $T_i$ is a cycle
(i.e., either $\PP$ is a proper fan, or $\PP$ is a sash and the paths $Q_i$ and $Q'_i$ do not share a vertex incident with both $F_1$ and $F_2$).
\end{itemize}
We say that the sash is a \emph{necklace} if all its tiles are pearls,
and that it is \emph{shelled} if all its tiles are shelled.
\end{definition}

The cornerstone claim of this section is a structural result
on large plane graphs~$G$ of bounded path-width, showing that each such graph contains either a deep nest or a long sash.
Together with the structural properties of crossing-critical graphs that we have discussed in Section~\ref{sec:prelim},
this theorem implies the existence of long sashes in the planarizations of sufficiently large crossing-critical graphs.

One might think the existence of long sashes is nearly obvious; cannot we just take the bags of the path decomposition and turn them into
the tiles of the sash?  The issue with this simple idea is that the path decomposition does not give us much
control over the drawing of $G$; in particular, the vertices and edges of a single bag may be geometrically far apart in the plane drawing of~$G$.
As an example, consider the width two path decomposition of a cycle where one of the vertices of the cycle appears in all the bags.

To deal with this issue, we use Theorem~\ref{thm-eqdec} to obtain a framed path decomposition $(P,\beta)$ and focus on
a connected component $K_0$ of $\beta[P-\{s,t\}]-C$, where $s$ and $t$ are the ends of $P$ and $C$ is the core of the path decomposition.
This ensures that the parts of the bags in $K_0$ are drawn along the paths of the frame contained in $K_0$.
Moreover, using the uniformity with respect to the relevant properties, we conclude that each of the bags induces a connected
subgraph of $K_0$, ensuring that there exist many connections between the frame paths.
These connections then can be used to either divide $K_0$ (and its neighborhood in $C$) into tiles or to form a deep nest.

\begin{theorem}\label{thm-tiles}
Let $w$, $m$, and $k_0$ be non-negative integers, and let $g:\mathbb{N}\to\mathbb{N}$ be an arbitrary non-decreasing function.
There exist integers $w_0$ and $n_0$ such that the following claim holds.
Let $G$ be a $2$-connected plane graph and let $Y$ be a set of at most $k_0$ vertices of $G$ of degree at most $4$.
If $G$ has path-width at most $w$ and $|V(G)|\ge n_0$, then at least one of the following statements holds:
\begin{itemize}
\item[(a)] $G$ contains a $0$-nest, a $1$-nest, a $2$-nest, or a proper or degenerate $F$-nest for some face $F$ of $G$, of depth $m$, and with
all its cycles or paths disjoint from $Y$, or
\item[(b)] for some $w_1\le w_0$, $G$ contains a sash of length at least $g(w_1)$ and with support disjoint from $Y$,
such that each of its tiles has size at most $w_1$.  Moreover, the sash is either a necklace or shelled.
\end{itemize}
\end{theorem}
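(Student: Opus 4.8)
The plan is to extract from a path decomposition of $G$ a long ``linked'' piece, which forces a topologically rigid layered picture, and then to read off one of the six claimed configurations by a planar case analysis.

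\emph{A long linked decomposition.} I would start from any path decomposition of $G$ of width at most $w$, make it lean (a lean path decomposition of width equal to the path-width exists, by Thomas' theorem and its path-decomposition analogue) and then proper, neither step raising the width or destroying leanness. Since $|V(G)|\ge n_0$, from $\sum_x|\beta(x)|\le |V(P)|\cdot(w+1)$ and $\sum_x|\beta(x)|\ge|V(G)|$ we get $|V(P)|\ge n_0/(w+1)$. Invoking the linkedness machinery underlying the definitions just before the theorem --- pigeonholing on the adhesion sizes, which lie in $\{0,\dots,w\}$, and repeatedly cutting at the narrowest adhesions --- I obtain an integer $p\le w$ and a $p$-linked sub-decomposition whose order $N$ is as large as we like, provided $n_0$ was chosen large enough. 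This gives $p$ pairwise vertex-disjoint paths $Q_1,\dots,Q_p$ meeting the consecutive separators $S_0,S_1,\dots,S_N$ each in exactly one vertex, and for $1\le i\le N$ a \emph{slice} $G_i$ (the part of $G$ drawn between $S_{i-1}$ and $S_i$) with $|V(G_i)|\le 2(w+1)$, containing a subpath of each $Q_j$, and meeting the rest of $G$ only along $S_{i-1}\cup S_i$.

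\emph{The planar layered picture.} By $2$-connectedness and planarity each slice $G_i$ is drawn inside a closed disc $D_i$ carrying $S_{i-1}\cup S_i$ on its boundary, and the discs meet consecutively along the separators, so their union $D$ is a topological disc; the rest of $G$, say $G^{\mathrm{out}}$, lies in the complementary disc and is attached only along $\partial D$. Because the $Q_j$ are disjoint curves, their left-to-right order is the same at every separator, so after relabelling $\partial D$ consists of two long arcs (one along $Q_1$ and the ``bottom'' of each slice, the other along $Q_p$) and two short end pieces through $S_0$ and $S_N$, while the interior of $D$ is cut by $Q_2,\dots,Q_{p-1}$ into lanes $L_1,\dots,L_{p-1}$.

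\emph{Reading off the configuration.} Along a long contiguous stretch of slices --- obtained after discarding the $O(k_0)$ slices meeting $Y$ and any slices with trivial threading subpaths --- I distinguish cases. If two distinct faces of $G$ flank the stretch (either two empty lanes, or two distinct faces of $G^{\mathrm{out}}$ running along the $Q_1$- and $Q_p$-sides of $\partial D$), then the separators $S_i$, completed to paths inside boundedly many consecutive slices and thinned to be pairwise disjoint, form the bars of an $(F_1,F_2)$-band whose tiles are bounded groups of slices (hence of size at most some $w'=O(w)$) and whose length is $\Theta(N)$, which exceeds $g(w')$ once $n_0$ is large; if on one side the flanking face degenerates to a vertex $u$ not on the other, we obtain a proper $(F_1,u)$-fan instead. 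If no such pair of faces exists, the stretch must ``close up'' around a set $K$ of at most $p$ vertices lying in all cross-sections, and the resulting concentric structure yields a $k$-nest, an $F$-nest, or a degenerate $F$-nest of depth $m$ once $N$ is large in terms of $m$ and $w$; here one may take $k\le 2$, since a $k$-nest of depth $d$ with $k$ bounded contains a $2$-nest of depth at least $d/2^{O(w)}$ by splitting the common cycles along two $K$-vertices. In all cases the bounded number and bounded degree of the vertices of $Y$ allow a local reroute or truncation so that the support (resp.\ every cycle/path) of the final configuration avoids $Y$. Fixing $w_0$ as the resulting $O(w)$ bound on tile size, and $n_0$ large enough for all length and depth requirements, finishes the proof.

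\emph{Main obstacle.} The delicate part is the last step: showing that the rigid picture falls into exactly these six cases --- excluding intermediate half-twisted behaviours --- while keeping the bars pairwise disjoint and the tiles of bounded size without shortening the band below $g(w')$. Extracting the long linked decomposition is technical but standard, and the bookkeeping that avoids $Y$ is routine by comparison.
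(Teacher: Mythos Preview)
Your outline gets the first step right --- extracting a $p$-linked sub-decomposition of large order is essentially the paper's Lemma~3.1 (your lean/pigeonhole argument is a reasonable substitute) --- but there is a genuine gap at the second step, and it is not merely the case analysis you flag as the ``main obstacle''.

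You assert that the slices $G_i$ are each drawn in a closed disc $D_i$ with $S_{i-1}\cup S_i$ on its boundary, and that these discs concatenate into a single topological disc $D$ with a uniform lane structure. Linkedness alone does not give you this. Nothing prevents the topological relationship between $L(x)$, $R(x)$, and the rest of $G[\beta(x)]$ from varying wildly from bag to bag: in some slices $S_{i-1}$ and $S_i$ may lie on a common face, in others they may be separated by a cycle inside the slice; the faces of $G$ that touch the $Q_1$- and $Q_p$-sides can change along the stretch; and your ``bars'' (the separators completed to vertex-disjoint paths) need not exist within boundedly many consecutive slices. So the layered picture you want to read off does not exist in general, and your case split has nothing stable to split on.

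The paper closes exactly this gap with an algebraic tool you do not invoke: it defines a finite semigroup of \emph{$q$-types} encoding the local topology of each slice relative to the linked paths, and applies Simon's factorisation forest theorem to find a long run of consecutive sub-blocks all of the same \emph{idempotent} $q$-type (Theorem~3.4). Idempotency is what forces every slice to behave topologically like the whole run --- so if one slice contains a separating cycle, they all do (yielding a nest), and if none does, one can name a common face $f$ and build the band or fan uniformly. Your ``topologically rigid layered picture'' is precisely the conclusion of that argument, not its premise; without the factorisation-forest step (or some equivalent device producing uniform local topology), the rigidity is unjustified and the subsequent case analysis cannot get started.
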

\begin{proof}
Let $h'(w')=\max(5,|Y|+1,4m,g(3w')+2)$, let $h(w')=2h'(w')+2$ and let $w_3$ and $n_3$ be the corresponding values from Theorem~\ref{thm-eqdec}.
Let $w_0=3w_3$ and $n_0=(w+1)n_3$.

Since $G$ has path-width at most $w$ and $|V(G)|\ge n_0$, $G$ has a proper path decomposition $(P_0,\beta_0)$
of (interior) width at most $w$ and order at least $n_0/(w+1)=n_3$.
By Theorem~\ref{thm-eqdec}, there exist integers $w'\le w_3$ and $p\le w$ such that
$G$ has a $p$-framed proper path decomposition $(P,\beta)$ of
interior width at most $w'$ and order at least $h(w')$, with a frame $\varphi$ and a superframe $\varphi_0$,
such that $(P,\beta)$ is uniform with respect to the relevant properties for $Y$ and $\varphi_0$.
Let $C$ be the core of the path decomposition $(P,\beta)$ and let $s$ and $t$ be the ends of $P$.
Let $P'=P-\{s,t\}$ and let $I$ be the independent set in $P'$ consisting of every other vertex of $P'$; the size of $I$ is at least $(h(w')-2)/2\ge h'(w')$.
Note that for distinct $x_1,x_2\in I$, we have $V(\beta[x_1] \cap \beta[x_2])=C$.  For a vertex $v\in L(\beta[P'])$ and every node $x\in V(P')$,
let $v_x$ be the vertex in the tuple $L(x)$ placed at the same position as $v$ in the tuple $L(\beta[P'])$;
and for $v\in R(\beta[P'])$, let us define $v_x\in R(x)$ analogously.  In either case, let $Q_v$ be the path of the frame $\varphi$
containing $v$.  For each path $Q$ of the frame $\varphi$, let $\gamma_Q$ be the curve from the definition of the uniformity
with respect to the relevant properties such that $\gamma_Q$ starts and ends in the faces incident with the ends of the path of $\varphi_0$
containing $Q$.

Since $|I|\ge h'(w')>|Y|$, there exists a node $x_0\in I$ such that $(V(\beta[x_0])\cap Y)\setminus C=\emptyset$.
By the uniformity with respect to the relevant property \ref{prop:contY}, we conclude that $V(\beta[P'])\cap Y\subseteq C$.

\begin{claim}\label{cl-pathsfromC}
If a vertex $u\in C$ is joined to another vertex $v\in L(\beta[P'])\cup R(\beta[P'])$ by a path $M$ in $\beta[P']$ with no internal vertices in $C$,
then $u\not\in Y$ and either the conclusion (a) holds, or $v\not\in C$.
\end{claim}
\begin{subproof}
By Observation~\ref{obs-bridge-to-comp} and the uniformity with respect to the relevant property \ref{prop:bridge},
for every node $x\in I$, the vertices $u$ and $v_x$ are connected by a path $M_x$ in $\beta[x]$ with no internal vertices in $C$.
The $|I|$ paths $\{M_x:x\in I\}$ are pairwise vertex-disjoint except for $u$ and possibly for $v$ if $v\in C$.
Since $|I|\ge 5$ and the vertices of $Y$ have degree four, we conclude that $u\not\in Y$.
If $v\in C$, then we also have $v\not\in Y$, and since $V(\beta[P'])\cap Y\subseteq C$,
the subgraph $\bigcup_{x\in I} M_x$ contains a $2$-nest of depth $\lfloor |I|/2\rfloor\ge m$ disjoint from $Y$,
and the conclusion (a) holds.
\end{subproof}
Let $K_0$ be a connected component of the graph $\beta[P']-C$. This graph is non-null, since
the path decomposition $(P,\beta)$ is proper.  Let $C_0$ be the set of vertices of $C$ adjacent in $\beta[P']$
to a vertex of $K_0$, and let $K$ be the subgraph of $\beta[P']$ induced by $V(K_0)\cup C_0$.

\begin{claim}\label{cl-Knice}
If the conclusion (a) does not hold, then $|C_0|\le 1$, at least one path of the frame~$\varphi$ is contained in $K_0$,
and $V(K)\cap Y=\emptyset$; and moreover, the graph $\beta(x)\cap K$ is connected for every node $x\in V(P')$.
\end{claim}
\begin{subproof}
If $C_0$ contained at least two distinct vertices, then they would be
joined by a path in $K$ with all internal vertices in $K_0$, and by Claim~\ref{cl-pathsfromC}, the conclusion (a)
would hold.  Therefore, we have $|C_0|\le 1$.  Since $G$ is $2$-edge-connected, we have $|(L(\beta[P'])\cup R(\beta[P']))\cap V(K)|\ge 2$,
and thus there exists a vertex $v\in (L(\beta[P'])\cup R(\beta[P']))\cap V(K_0)$.  Since $K_0$ is a component of
$\beta[P']-C$, the path $Q_v\in \varphi$ is contained in $K_0$. If $|C_0|=1$,
then the unique vertex $u\in C$ is joined to $v$ by a path in $K$ with all internal vertices in $K_0$,
and thus by Claim~\ref{cl-pathsfromC}, we have $u\not\in Y$.  Since $V(\beta[P'])\cap Y\subseteq C$, we conclude that $V(K)\cap Y=\emptyset$.

Consider now any node $x\in V(P')$ and any vertices $y_1,y_2\in V(\beta(x)\cap K)$.  For $i\in \{1,2\}$,
since $K$ is connected, it contains a path from $y_i$ to $v$.  Let $A_i\subseteq \beta(x)\cap K$ be the shortest initial segment of this
path from $y_i$ to a vertex $z_i\in (L(x)\cup R(x))\cap V(K)$, which exists since $(P,\beta)$ is a path decomposition
and $K_0$ is a component of $\beta[P']-C$.  Let $w^i\in L(\beta[P'])\cup R(\beta[P'])$ be the vertex such that $w^i_x=z_i$;
since the path of the frame $\varphi$ containing $z_i$ is contained in $K$, we have $w^i\in K$.
The vertices $w^1$ and $w^2$ are joined in $K$ by a path with internal vertices in $K_0$.
By Observation~\ref{obs-bridge-to-comp} and the uniformity with respect to the relevant property \ref{prop:bridge},
it follows that $z_1$ and $z_2$ are joined by a path $A$ in $\beta(x)$ with no internal vertices in $C$.
Since $|C_0|\le 1$, either $z_1=z_2$ or at least one of $z_1$ and $z_2$ belongs to $K_0$; and since $K_0$
is a component of $\beta[P']-C$, we conclude that $A$ is actually a path in $\beta(x)\cap K$.  Thus, the vertices $y_1$ and $y_2$
are joined by a walk consisting of $A_1$, $A$, and $A_2$ in $\beta(x)\cap K$.  It follows that the graph $\beta(x)\cap K$ is connected.
\end{subproof}

Let $s'$ and $t'$ be the ends of $P'$.

\begin{claim}\label{cl-commonface}
If the conclusion (a) does not hold, then $K$ has a face incident both with a vertex
of $\beta(s')\cap K_0$ and with a vertex of $\beta(t')\cap K_0$.
\end{claim}
\begin{subproof}
By Claim~\ref{cl-Knice}, there exists a path $Q$ of the frame $\varphi$ contained in $K_0$;
let $u\in L(s')\cap V(K_0)$ and $v\in R(t')\cap V(K_0)$ be its ends.
We claim that $u$ and $v$ are incident with a common face of $K$.

Otherwise, $K$ contains a cycle $S$ separating $u$ from $v$; let $\Lambda_u\ni u$ and $\Lambda_v\ni v$
be the open regions to which $S$ separates the plane.  Let $Q_0$ be the superpath
of $Q$ contained in the superframe $\varphi_0$, and let $u_0$ and $v_0$ be its ends.
The segment of $Q_0$ between $u_0$ and $u$ is disjoint from $K$ except for its end $u$,
and thus $u_0\in \Lambda_u$, and similarly $v_0\in \Lambda_v$.

It follows that the simple curve $\gamma_Q$ has one end in $\Lambda_u$ and the other end in $\Lambda_v$,
and thus $S$ separates the ends of $\gamma_Q$.
By the uniformity with respect to the relevant property \ref{prop:sepcyc},
for every $x\in I$, the plane graph $\beta[x]$ contains a cycle $S_x$ separating the ends of $\gamma_Q$.
Clearly $S_x$ intersects $Q_0$, and thus also $Q$. It follows that $S_x$ contains a vertex of $K_0$, and since
$K_0$ is a component of $\beta[P']-C$ and $K_0$ has at most one neighbor in $C$,
the cycle $S_x$ is contained in $K$.

Since $|I|\ge 4m$, there exists an end $q$ of $\gamma_Q$ and a set $I'\subseteq I$ of size $2m$
such that for each $x\in I'$, the point $q$ is contained in the open disk bounded by $S_x$.
The cycles $S_x$ for $x\in I'$ intersect at most in the single vertex of $C_0$ (if any),
and thus $m$ of them form either a $0$-nest or a $1$-nest of depth $m$ disjoint from $Y$.
Therefore, (a) holds.
\end{subproof}

By the previous claim, we can assume that $K$ has a face $F_0$ incident with both a vertex
of $\beta(s')\cap K_0$ and a vertex of $\beta(t')\cap K_0$.
Let $\varphi'$ be the set of the paths of the frame $\varphi$ that intersect (and thus are
contained in) $K$, and let $K'$ be the graph $(K\cap (\beta(s')\cup\beta(t')))\cup \bigcup \varphi'$.
By Claim~\ref{cl-Knice}, we can assume that the graphs $\beta(s')\cap K$ and $\beta(t')\cap K$ are connected,
and thus the graph $K'$ is connected.  Let $F'_0$ be the face of $K'$ containing $F_0$.

For each path $Q$ of $\varphi'$, let $Q'$ be the subpath of $Q$ between $R(s')$ and $L(t')$.
Since the graphs $\beta(s')\cap K_0$ and $\beta(t')\cap K_0$ are vertex-disjoint,
observe that there exist paths $Q_1$ and $Q_2$ of the frame $\varphi$ such that
the boundary of $F'_0$ consists of $Q'_1$, a walk in $\beta(s')\cap K$, $Q'_2$, and a walk in $\beta(t')\cap K$.
Let us remark that one of $Q'_1$ and $Q'_2$ can consist of the single vertex of $C_0$;
and that $Q'_1=Q'_2$ when $\varphi'$ consists of only one path.
Let $s''$ and $t''$ be the neighbors of $s'$ and $t'$ in $P'$, respectively.
A \emph{slice} is the subpath $\beta(x)\cap Q'_1$ or $\beta(x)\cap Q'_2$ for a node $x\in V(P')\setminus \{s',s'',t',t''\}$;
note that the ends of the slice are in $L(x)$ and $R(x)$.  The \emph{left arc} is the part of the boundary of
$F'_0$ between the vertices of $R(s'')$ in $Q'_1$ and $Q'_2$ which intersects $\beta(s')\cap K_0$,
and the \emph{right arc} is the part between the vertices of $L(t'')$ which intersects $\beta(t')\cap K_0$.
Thus, each edge incident with $F'_0$ is contained in exactly one slice or arc.  The union of the left arc and the right arc
is the \emph{brace}.

\begin{claim}\label{cl-incid}
If the conclusion (a) does not hold, then every path $B\subseteq G$ with end in the boundary of $F'_0$ and otherwise drawn inside $F'_0$
has both ends contained either in a slice or in the brace.  Moreover, for $i\in \{1,2\}$, $G$ has a face $F_i$ such that
for every $x\in V(P')\setminus \{s',s'',t',t''\}$, the vertices of $Q'_i\cap (L(x)\cup R(x))$ are incident with $F_i$.
\end{claim}
\begin{subproof}
We can assume that at least one end $u$ of $B$ is not contained in the brace, and in particular it does not belong to $R(s)\cup L(t)$.
Hence, we have $u\in V(K_0)\setminus (R(s)\cup L(t))$.  Since $K_0$ is a component of $\beta[P']-C$ and all its neighbors in $C$ are contained in the set $C_0\subset V(K)$ of size at most one,
it follows that $B$ is a path in $K$.  Let $X=\bigcup_{y\in V(P')} (L(y)\cup R(y))$.  The vertices of $X\cap V(K)$ are on the paths of $\varphi'$,
and thus they are not drawn in $F'_0$.  Hence, no internal vertex of $B$ belongs to $X$.  Since $(P,\beta)$ is a path decomposition,
we conclude that there exists a node $x\in V(P')\setminus \{s',t'\}$ such that $B\subseteq \beta(x)$.  If $x\in\{s'',t''\}$,
then both ends of $x$ are in the brace.  If $x\not\in\{s'',t''\}$, then $B$ cannot have one end in $Q'_1$ and the other end in $Q'_2$,
as otherwise $K$ could not have the face $F_0\subseteq F'_0$ incident with both a vertex
of $\beta(s')\cap K_0$ and a vertex of $\beta(t')\cap K_0$.  Hence, both ends of $B$ are contained in the slice $\beta(x)\cap Q'_1$ or $\beta(x)\cap Q'_2$.
Therefore, the first part of the claim holds.

This clearly implies that the ends of all slices of $Q_1$ are incident with the same face $F_1\subseteq F'_0$ of $G$,
and the ends of all slices of $Q_2$ are incident with the same face $F_2\subseteq F'_0$ of $G$.
Hence, the second part holds as well.
\end{subproof}

Let us remark that $F_1=F_2$ is possible, in the case that $G$ does not contain any path from the left arc to the right arc 
drawn in $F'_0$.  Consider any node $x\in V(P')\setminus \{s',s'',t',t''\}$.  Since the graph $\beta(x)\cap K$ is connected by Claim~\ref{cl-Knice},
it contains a path between the slices $\beta(x)\cap Q'_1$ and $\beta(x)\cap Q'_2$ disjoint from $Q'_1\cup Q'_2$ except for its ends;
by Claim~\ref{cl-incid}, the drawing of this path is disjoint from $F'_0$.  Since the ends of the slices are incident with $F_1$ and $F_2$,
respectively, we conclude that $\beta(x)\cap K$ contains a path $P_x$ with one end contained in $Q'_1$ and incident with $F_1$,
the other end contained in $Q'_2$ and incident with $F_2$, and otherwise disjoint from the boundaries of $F_1$ and $F_2$.

Let $I'=I\setminus \{s',s'',t',t''\}$ and note that the independent set $I'$ has size at least $|I|-2\ge \max(m,g(3w'))$.
For distinct nodes $x,x'\in I'$, we clearly have $V(P_x)\cap V(P_y)=C_0$.
If $C_0\neq\emptyset$, then by symmetry between $Q'_1$ and $Q'_2$, we can assume that the path $Q'_1$ does not consist of the single vertex
$u\in C_0$.  Let $P'_x$ denote the subpath of $P_x$ between it end in $Q'_1$ and $u$.
If $u$ is incident with $F_1=F_2$, then $(P'_x:x\in I')$ is a degenerate $F_1$-nest of depth at least $|I'|\ge m$, and the conclusion (a)
holds.  If $u$ is not incident with $F_1$, then $(P'_x:x\in I')$ is a shelled fan of length at least $|I'|\ge g(3w')$ with bottom face $F_1$ and top vertex $u$.
Since $I$ consists of every other vertex of $P$, each tile of the fan is contained in the union of three consecutive
bags of $(P,\beta)$. Since the interior width of the path decomposition $(P,\beta)$ is at most $w'$, each tile has size at most $3w'$.
Moreover, since $V(K)\cap Y=\emptyset$, the support of the fan is disjoint from $Y$.
Hence, the outcome (b) holds, with $w_1=3w'\le 3w_3=w_0$.

Similarly, if $C_0=\emptyset$, then $(P_x:x\in I')$ is either a proper $F_1$-nest of depth at least $m$ (if $F_1=F_2$)
and (a) holds, or a band of length at least $g(3w')$ with bottom face $F_1$ and top face $F_2$, tiles of size at most $3w'$, and support disjoint from $Y$ (if $F_1\neq F_2$).
In the latter case, if $\varphi'$ consists of a single path, then the band is a necklace.  Otherwise, it is shelled, since
the vertex-disjoint paths $Q'_1$ and $Q'_2$ separate the top and the bottom border of each tile.
\end{proof}

Since planarizations of optimal drawings of crossing-critical graphs do not contain deep nests,
only the long sash outcome is possible.  Thus, we obtain the following corollary, 
proving the core part of \ref{it:twokinds}.

\begin{corollary}\label{cor-tiles}
Let $c$ be a positive integer, and let $g:\mathbb{N}\to\mathbb{N}$ be an arbitrary non-decreasing function.
There exist integers $w_0$ and $n_0$ such that the following holds.
Let $G$ be a $2$-connected $c\,$-crossing-critical graph, and let $G'$ be the 
planarization of a drawing of $G$ with the smallest number of crossings.  
Let\/ $Y$ denote the set of crossing vertices of $G'$.
If $|V(G)|\ge n_0$, then $G'$ contains a sash $\PP$
such that for some $w_1\le w_0$, all the tiles of $\PP$ have size at most $w_1$ and are disjoint from $Y$,
and the length of $\PP$ is at least $g(w_1)$.  Moreover, the sash $\PP$ is either a necklace or shelled.
\end{corollary}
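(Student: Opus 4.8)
The plan is to apply Theorem~\ref{thm-tiles} to the planarization $G'$ of the chosen optimal drawing $D$ of $G$, verify its hypotheses, and then eliminate the first (nest) conclusion using the crossing-criticality of $G$. First I fix the parameters. Since $G$ is $c$-crossing-critical, Theorem~\ref{thm:Richter-Thom} gives $\crg(G)\le\lceil 5c/2+16\rceil$, so $D$ has at most $k_0:=\lceil 5c/2+16\rceil$ crossings; thus $Y$ is a set of at most $k_0$ vertices of $G'$, each of degree exactly $4$, and $k_0$ serves as the bound ``$k_0$'' in Theorem~\ref{thm-tiles}. Moreover $G'$ is obtained from $G$ by introducing the $|Y|\le k_0$ crossing vertices on edge-segments, so adding all of $Y$ to every bag of an optimal path decomposition of $G$ yields a path decomposition of $G'$; hence $G'$ has path-width at most $\mathrm{pw}(G)+k_0$, which by Theorem~\ref{thm:bounded-pw} is bounded by an integer $w=w(c)$. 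Finally, $G'$ is a $2$-connected plane graph: it is plane by construction, and it has no cutvertex --- no vertex of $G$ can be a cutvertex of $G'$ without being one of $G$, and a crossing vertex that were a cutvertex of $G'$ would exhibit a two-edge cut of $G$ that is crossed in $D$, which we may exclude by choosing $D$ so that no two-edge cut of $G$ is crossed (crossing number being additive over two-edge cuts).

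Next I choose the depth parameter $m=m(c)$ large enough that no optimal drawing of a $c$-crossing-critical graph contains a $0$-nest, $1$-nest, $2$-nest, $F$-nest, or degenerate $F$-nest of depth $m$ all of whose cycles or paths avoid the crossing vertices. For the $0$-, $1$- and $2$-nests this is exactly the result of Hern\'andez-V\'elez et al.~\cite{crnest}; for an $F$-nest or a degenerate $F$-nest I will invoke the argument behind \cite{Hl,HS}, whose upshot is that a deep such configuration in an optimal drawing contains two ``interchangeable'' parallel segments, so that deleting a suitable edge between them leaves the crossing number at least $c$, contradicting criticality. With $m$ fixed, I apply Theorem~\ref{thm-tiles} to the plane graph $G'$, the vertex set $Y$, and the parameters $w$, $m$, $k_0$, and the given function $g$; this produces the integers $w_0$ and $n_0$ of the corollary (we may take the corollary's $n_0$ to be the one given by Theorem~\ref{thm-tiles}, since $|V(G')|=|V(G)|+|Y|\ge|V(G)|$).

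Now suppose $|V(G)|\ge n_0$, so $|V(G')|\ge n_0$ and Theorem~\ref{thm-tiles} applies. Its first conclusion is impossible: it produces a $0$-, $1$-, $2$-, $F$-, or degenerate $F$-nest of depth $m$ in $G'$ all of whose cycles or paths are disjoint from $Y$; since such cycles and paths use no crossing vertex, each of their edges is an uncrossed edge of $G$, so the same configuration, with the same nesting pattern of faces of the drawing, is present in the optimal drawing $D$ of $G$ itself --- a deep nest of one of the five kinds, contradicting the choice of $m$. Therefore the second conclusion of Theorem~\ref{thm-tiles} holds for $G'$: for some $w'\le w_0$ there is an $(F_1,F_2)$-band or a proper $(F_1,u)$-fan of length at least $g(w')$, with support disjoint from $Y$ and with every tile of size at most $w'$. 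This is precisely the statement of Corollary~\ref{cor-tiles}.

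The hard part, I expect, is the promised exclusion of deep $F$-nests and degenerate $F$-nests from optimal drawings of $c$-crossing-critical graphs. Unlike the $0$-, $1$- and $2$-nest cases, which are available directly from \cite{crnest}, there is no purely topological reduction of an $F$-nest or a degenerate $F$-nest to a bounded number of nested edge-disjoint cycles --- for instance a wheel already contains arbitrarily deep degenerate fans but no deep classical nest --- so crossing-criticality of $G$ must be used in an essential way, via a rerouting-and-deletion argument in the spirit of \cite{Hl,HS}. A second, routine point to pin down is the $2$-connectivity of $G'$, equivalently that the chosen optimal drawing crosses no two-edge cut of $G$.
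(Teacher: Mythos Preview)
Your overall strategy coincides with the paper's: set up the parameters $k_0$, $w$, $m$, verify the hypotheses of Theorem~\ref{thm-tiles} for the planarization $G'$ (path-width bound via Theorem~\ref{thm:bounded-pw} plus $|Y|\le k_0$ via Theorem~\ref{thm:Richter-Thom}, and $2$-connectivity), and then kill the nest alternative by crossing-criticality. That is exactly what the paper does.

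There is one real gap. You defer the exclusion of deep $F$-nests and degenerate $F$-nests to ``the argument behind \cite{Hl,HS}'', but those references do not contain this result; they treat only $0$-, $1$-, and $2$-nests (as does \cite{crnest}). The paper supplies this step itself, as the new Lemmas~\ref{lemma-disjp} and~\ref{lemma-disjpd}, and the argument is not a one-liner: one finds, among the paths $P_a,\dots,P_b$ of the nest that lie in an uncrossed strip, a sequence of $2$-connected ``blocks'' $G_i$ with bounding cycles $C_i$; after deleting an edge $e$ on a middle path, in the cheaper drawing of $G-e$ two of these cycles $K_1,K_2$ survive uncrossed, and one surgically recombines the pieces $H_1$, $H_2$, the disks $\bar K_1$, $\bar K_2$, and the region $\bar K$ between them to rebuild a drawing of $G$ with fewer than $c$ crossings. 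You correctly flag this as ``the hard part'', but your proposal as written points to the wrong source and does not supply the argument.

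A minor point on $2$-connectivity of $G'$: your route via ``choose $D$ so that no two-edge cut is crossed'' is more delicate than necessary (and not obviously compatible with ``$D$ has minimum crossings'' without further argument). The paper's observation is simpler and self-contained: if a crossing vertex were a cutvertex of $G'$, one could flip one side at that crossing and strictly reduce the number of crossings, contradicting optimality of $D$.
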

\begin{proof}
Let $k_0=\fnlab{thm-crit-cno}(c)$, $w=\fnlab{thm:bounded-pw}(c)+k_0$ 
and $m=\max(\fnlab{thm-nestdepth}(c),\fnlab{lemma-fnest}(c))$.  Let $w_0$ and $n_0$ be the corresponding values from
Theorem~\ref{thm-tiles}.

By Theorem~\ref{thm-crit-cno}, each $c\,$-crossing-critical graph 
has a drawing with at most $k_0$ crossings, and thus $|Y|\le k_0$.
By Theorem~\ref{thm:bounded-pw}, the graph $G$ has
path-width at most $w-k_0$, and thus its planarization $G'$ has path-width at most $w$
(we can simply add the vertices of $Y$ to all the bags of the path decomposition of $G$ to turn it into
a path decomposition of $G'$).  By Observation~\ref{obs-planarization-2connected}, the graph $G'$ is $2$-connected.

By Theorem~\ref{thm-nestdepth} and Lemma~\ref{lemma-fnest}, the graph $G'$ does not contain a
$0$-nest, a $1$-nest, a $2$-nest, or proper or degenerate $F$-nest (for any face $F$ of $G'$) of depth $m$
with cycles or paths disjoint from $Y$, and thus it does not satisfy (a) from Theorem~\ref{thm-tiles}.
Therefore, $G'$ satisfies (b) from Theorem~\ref{thm-tiles}, which matches the outcome of Corollary~\ref{cor-tiles}.
\end{proof}

\section{Removing and inserting tiles}
\label{sec:shorten-band}

In the second part of the paper, we study the arrangement of bounded tiles in a long enough plane sash.
We focus on finding repeated subsequences which then can be shortened.
Importantly, this shortening preserves $c\,$-cros\-sing-criticality.
In the opposite direction we then manage to define the converse operation of
``expansion'' of a plane band which also preserves $c\,$-cros\-sing-criticality.
These findings will imply the final outcome---a construction of 
all $c\,$-crossing-critical graphs from an implicit list of basic graphs of bounded size.

The precise statements can be found in Corollaries~\ref{cor-reducrit} and \ref{cor-expcrit} and in Theorem~\ref{thm:mainexpansion};
however, we need several auxiliary results and definitions first.

\subsection{Surgery across an edge cut}

For a graph $H$, the subgraphs $Q_1,Q_2\subset H$ are \emph{incomparable} if $V(Q_1)\setminus V(Q_2)\neq \emptyset\neq V(Q_2)\setminus V(Q_1)$.
For such subgraphs, the \emph{$(Q_1,Q_2)$-breadth} of $H$ is the maximum number $p$ of pairwise
edge-disjoint paths between $V(Q_1)\setminus V(Q_2)$ and $V(Q_2)\setminus V(Q_1)$ in $H-V(Q_1\cap Q_2)$.
For a sash $\PP=(P_1,\ldots,P_m)$ with tiles $T_1$, \ldots, $T_{m-1}$ in a plane graph, the \emph{breadth} of
the sash $\PP$ is defined as the $(P_1,P_m)$-breadth of the support of $\PP$, and for $i\in \{1,\ldots,m-1\}$, the \emph{breadth} of the tile $T_i$
is defined to be its $(P_i,P_{i+1})$-breadth.

Suppose that a graph $H$ is connected and drawn in the plane with crossings.
A directed path $P$ in $H$ is \emph{left-exposed} (resp. \emph{right-exposed}) if
\begin{itemize}
\item there are no crossings on the edges of $P$, and 
\item $H$ has a face $f$ such that $P$ is a part of the boundary walk of $f$ in the counterclockwise (resp. clockwise) order around $f$.
\end{itemize}
We say that such a face $f$ is a \emph{left window} (resp. \emph{right window}) of $P$.
Note that if $P$ consists of a single vertex, then it is both left- and right-exposed and all incident faces are both left and right windows,
and otherwise the left or right window is uniquely determined.
For left-exposed (or right-exposed) paths $P_1$ and $P_2$, a \emph{left (or right) window connector} is a simple $H$-normal curve disjoint from $P_1\cup P_2$
starting in a left (right) window of $P_1$ and ending in a left (right) window of $P_2$.

Suppose that $G$ is a graph and $G=G_1\cup H$ for subgraphs $G_1$ and $H$, and let $G'$ be another graph such that $G'=G_1\cup H'$ and $G_1\cap H=G_1\cap H'$.
Then we say that $G'$ is obtained from $G$ by \emph{replacing $H$ by $H'$}.  If $G_1\cap H$ is the union of incomparable paths $P_1$ and $P_2$ intersecting at most in a common
end of $P_1$ and $P_2$, then we say that $G_1$ and $H$ are \emph{$(P_1,P_2)$-fragments} of $G$.

We are now ready to state a key lemma, which we use to argue about the crossing number of graphs obtained by replacing parts of the sashes.

\begin{lemma}\label{lemma-surgcut}
Let $G$ be a graph and let $G_1$ and $H$ be $(P_1,P_2)$-fragments of $G$ for incomparable paths $P_1$ and $P_2$ intersecting at most in a common end,
let $G'$ be obtained from $G$ by replacing $H$ by another graph $H'$, and let us fix an orientation of the paths $P_1$ and $P_2$ and
a drawing $\Gamma'$ of $G'$ in the plane with $c'$ crossings.  Suppose that
\begin{enumerate}[(i)]
\item\label{it-sur-nocr} there are no crossings on the edges of $P_1$ and $P_2$ in the drawing $\Gamma'$,
\item\label{it-sur-rexp} $P_1$ and $P_2$ are right-exposed in the drawing of $G_1$ induced by $\Gamma'$,
\item\label{it-sur-wind} for $i\in\{1,2\}$, in the drawing of $H'$ induced by $\Gamma'$, each edge of $E(H')\setminus E(P_i)$ incident with a vertex of $V(P_i)\setminus V(P_{3-i})$
starts in a right window of $P_i$ in $G_1$, and
\item\label{it-sur-plan} $H$ has a plane drawing $\Gamma_H$ (without crossings) such that $P_1$ and $P_2$ are left-exposed
and a face $F$ of $H$ is a left window of both $P_1$ and $P_2$.
\end{enumerate}
Let $q$ be a non-negative integer.
If the $(P_1,P_2)$-breadth of $H'$ is at least $q$ and
either $q>c'$ or the $(P_1,P_2)$-breadth of $H$ is at most $q$, then there exists a drawing $\Gamma$ of $G$ with at most $c'$ crossings.
Moreover, the drawing of $G_1$ induced by $\Gamma$ is the same as the one induced by $\Gamma'$,
and for every $e\in E(G_1)$, if there is no crossing on $e$ in $\Gamma'$, then there also is no crossing on $e$ in $\Gamma$.
\end{lemma}
\begin{proof}
For any path $Q$ in $H'-V(P_1\cap P_2)$ from $V(P_1)\setminus V(P_2)$ to $V(P_2)\setminus V(P_1)$,
let $\gamma_Q$ be a simple curve tracing $Q$ in the drawing $\Gamma'$, but starting just after the first vertex of $Q$ and ending just before the last vertex of $Q$.
Observe that by \ref{it-sur-nocr}, \ref{it-sur-rexp}, and \ref{it-sur-wind}, $\gamma_Q$ is a right window connector for $P_1$ and $P_2$ in the drawing $\Gamma_1$.

If $q>0$, then since the $(P_1,P_2)$-breadth of $H'$ is at least $q$, there exists a path
in $H'-V(P_1\cap P_2)$ from $V(P_1)\setminus V(P_2)$ to $V(P_2)\setminus V(P_1)$.  Choose such a path
$Q_0$ with the smallest number $a$ of crossings with the drawing $\Gamma_1$, and let $\gamma=\gamma_{Q_0}$.
If $q=0$, then let $\gamma$ be an arbitrary right window connector for $P_1$ and $P_2$ in the drawing $\Gamma_1$
and let $a$ be the number of its intersections with $\Gamma_1$.
In both cases, let $F_1$ and $F_2$ be the right windows of $P_1$ and $P_2$, respectively,
containing the ends of $\gamma$.

Let $c_1$ be the number of crossings in the drawing $\Gamma_1$ of $G_1$ induced by $\Gamma'$.
If $F_1=F_2$, then $G$ has a drawing with $c_1\le c'$ crossings, obtained from $\Gamma_1$
by gluing the plane drawing $\Gamma_H$ of $H$ from \ref{it-sur-plan} between $P_1$ and $P_2$ in the face $F_1=F_2$.  Hence, suppose
that $F_1\neq F_2$, and thus $a\ge 1$.

Since the $(P_1,P_2)$-breadth of $H'$ is at least $q$, Menger's theorem implies that $H'-V(P_1\cap P_2)$ contains $q$
pairwise edge-disjoint paths from $V(P_1)\setminus V(P_2)$ to $V(P_2)\setminus V(P_1)$.
By the choice of $Q_0$, each such path crosses $G_1$ at least $a$ times in the drawing $\Gamma'$.
It follows that
$$c'\ge c_1+aq\ge q.$$
In particular $q\le c'$, and by the assumptions, the $(P_1,P_2)$-breadth of $H$ is at most $q$.
Therefore, the smallest edge-cut $C$ in $H-V(P_1\cap P_2)$ separating $V(P_1)\setminus V(P_2)$ from
$V(P_2)\setminus V(P_1)$ has size at most $q$.  Let $H-C=H_1\cup H_2$, where $P_1\subseteq H_1$, $P_2\subseteq H_2$, and $H_1\cap H_2=P_1\cap P_2$.
Observe that by the minimality of the edge-cut~$C$, all edges of $C$ are in the drawing of $H-C$
induced by $\Gamma_H$ drawn inside the same face of $H-C$, specifically the one containing $F$.

For $i\in \{1,2\}$, let us now draw $H_i$ in the face $F_i$ of the drawing $\Gamma_1$ of $G_1$, using the plane drawing of $H_i$
induced by $\Gamma_H$.  Then, let us draw the edges of $C$ along the curve $\gamma$, so that each of them crosses the drawing of $G_1$
exactly $a$ times.  The resulting drawing $\Gamma$ of the graph $G$ has
$$c_1+a|C|\le c_1+aq\le c'$$
crossings.  Moreover, the drawing of $G_1$ induced by $\Gamma$ is $\Gamma_1$.
Finally, if there is a crossing on an edge $e$ in $\Gamma$ but not in $\Gamma_1$, then $e$ is intersected by $\gamma$, and thus there is a crossing
of an edge of the path $Q_0$ with $e$ in the drawing $\Gamma'$.
\end{proof}

The conditions \ref{it-sur-nocr}, \ref{it-sur-rexp}, and \ref{it-sur-wind} of Lemma~\ref{lemma-surgcut} clearly hold when $P_1$ and $P_2$ 
are single-vertex paths, no matter what the drawing of $G'$ is.  Hence, we obtain the following corollary, which is useful in the necklace case.

\begin{corollary}\label{cor-surgcut}
Let $G$ be a graph and let $G_1$ and $H$ be $(v_1,v_2)$-fragments of $G$ for distinct vertices $v_1$ and $v_2$ of $G$.
Let $G'$ be a graph obtained from $G$ by replacing $H$ by another graph $H'$.
Suppose that $H$ has a plane drawing such that $v_1$ and $v_2$ are incident with the same face of $H$,
and that $G'$ has a drawing with $c'$ crossings.
Let $q$ be a positive integer.  If the $(v_1,v_2)$-breadth of $H'$ is at least $q$ and
either $q>c'$ or the $(v_1,v_2)$-breadth of $H$ is at most $q$, then $\crg(G)\le c'$.
\end{corollary}

\subsection{Reducing and expanding a necklace}

An essentially standard argument can be used to reduce and expand a necklace using the
operations described in the following definition.
Let $H$ be a graph with distinct vertices $u$ and $v$ and let $p$ be the $(u,v)$-breadth of $H$.  We say that $H$ is \emph{$(u,v)$-breadth-$p$-uniform}
if each edge of $H$ is contained in an edge-cut of size $p$ separating $u$ from $v$.
Equivalently, $H$ is the union of $p$ pairwise edge-disjoint paths from $u$ to $v$,
but no proper subgraph of $H$ has this property.  

\begin{definition}[reduction and expansion, the necklace case]\label{def:red-exp-necklace}
Let $G$ be a $2$-connected graph drawn in the plane with crossings, let $G'$ be the planarization of $G$
and let $\PP=(v_1,\ldots, v_m)$ be a necklace in $G'$
whose support does not contain any crossing vertices, with tiles $T_1$, \ldots, $T_{m-1}$ in order.
For any $i\in\{1,\ldots,m-1\}$, the graph obtained from $G-(V(T_i)\setminus \{v_i,v_{i+1}\})$ by identifying the vertices $v_i$ and $v_{i+1}$
is a \emph{$\PP$-reduction} of $G$.  Conversely, let $p$ be the breadth of $\PP$,
and suppose that for some $i\in \{2,\ldots,m-1\}$, a graph $M$ is obtained from $G$ by
\begin{itemize}
\item splitting the vertex $v_i$ into two vertices $v'_i$ and $v''_i$, with all edges of $T_{i-1}$ incident with $v_i$ redirected to $v'_i$
and all edges of $T_i$ incident with $v_i$ redirected to $v''_i$, and 
\item adding a plane $(v'_i,v''_i)$-breadth-$p$-uniform graph $T$ intersecting the rest of $G$ exactly in $v'_i$ and $v''_i$.
\end{itemize}
Then $M$ is a \emph{$\PP$-expansion of $G$}.
\end{definition}

Let us now argue that these operations preserve the crossing-criticality of $G$.

\begin{lemma}\label{lemma-neck}
Let $c$ be a positive integer.  Let $G$ be a $2$-connected $c\,$-crossing-critical graph, 
let $G'$ be the planarization of an optimal drawing $\Gamma$ of $G$,
let $\PP=(v_1,\ldots, v_m)$ be a necklace in $G'$
whose support $S$ does not contain any crossing vertices, with tiles $T_1$, \ldots, $T_{m-1}$ in order,
and let $p$ be the breadth~of~$\PP$.
Then $p\le c$ and for every $i\in\{1,\ldots,m-1\}$, the tile $T_i$ is $(v_i,v_{i+1})$-breadth-$p$-uniform.
Moreover, all $\PP$-reductions and $\PP$-expansions of $G$ are 2-connected $c\,$-crossing-critical graphs of crossing number $\crg(G)$.
\end{lemma}
\begin{proof}
Consider any $i\in \{1,\ldots,m-1\}$, and for the first claim, suppose for a contradiction that $p>c$ or $T_i$ is not $(v_i,v_{i+1})$-breadth-$p$-uniform,
and thus there exists an edge $e\in E(T_i)$ not contained in any edge cut of size at most $q=\min(p,c)$ separating
$v_i$ from $v_{i+1}$.  Since the $(v_1,v_m)$-breadth of the support $S$ of $\PP$ is $p\ge q$, it follows that the $(v_1,v_m)$-breadth of $S-e$ is at least $q$.

Let $G_1$ be the subgraph obtained from $G$ by deleting the subgraph $S$ except for the vertices $v_1$ and $v_m$;
thus, $G_1$ and $S$ are $(v_1,v_m)$-fragments of $G$ and $G-e$ is obtained from $G$ by replacing $S$ by $S-e$.
By the criticality of $G$, the graph $G-e$ can be drawn in the plane with $c'<c$ crossings.
Moreover, either $q=c>c'$, or $q=p$ is the $(v_1,v_m)$-breadth of $S$.  By Corollary~\ref{cor-surgcut},
we conclude that $\crg(G)\le c'<c$, which is a contradiction.

Next, let us argue that $\PP$-reductions and $\PP$-expansions of $G$ are 2-connected $c\,$-crossing-critical graphs of crossing number $\crg(G)$.
If $p=1$, then the breadth-uniformity of the tiles of $\PP$ implies that $S$ is a path from $v_1$ to $v_m$,
and the claim is obviously true, since the $\PP$-reductions and $\PP$-expansions only replace $S$ by paths of different length.
Hence, suppose that $p\ge 2$.

Consider now a $\PP$-reduction $G_2$ of $G$, obtained by contracting the tile $T_i$ for some $i\in\{1,\ldots,m-1\}$,
and let $S_2$ be the corresponding subgraph obtained from $S$ by contracting the same tile.
Note that $G_2$ is $2$-connected.
Since all tiles of $\PP$ are breadth-$p$-uniform and $m\ge 3$ by the definition of a band,
it follows that $S_2$ is $(v_1,v_m)$-breadth-$p$-uniform. Since $G_2$ is obtained from $G$ by replacing $S$ by $S_2$ and vice versa,
Corollary~\ref{cor-surgcut} implies that $\crg(G_2)=\crg(G)$.

Consider any edge $e$ of $G_2$.  Note that $G-e$ is obtained from $G_2-e$ by replacing $S_2-e$ by $S-e$, and that
$S-e$ and $S_2-e$ have the same $(v_1,v_m)$-breadth ($p-1$ if $e\in E(S)$ and $p$ if $e\in E(G_1)$).
By Corollary~\ref{cor-surgcut} with $G_2-e$ playing the role of $G$ and $G-e$ playing the role of $G'$,
we conclude that $\crg(G_2-e)\le \crg(G-e)<c$.  Since this holds for
every edge $e\in E(G_2)$, it follows that $G_2$ is also $c$-crossing-critical.

Finally, consider a $\PP$-expansion $G_3$ of $G$, obtained by splitting the vertex $v_i$ to $v'_i$ and $v''_i$
for some $i\in\{2,\ldots,m-1\}$ and adding a plane $(v'_i,v''_i)$-breadth-$p$-uniform graph $T$.
Let $S_3$ be the graph obtained from $S$ by performing the same transformation, and note
that $S_3$ is $(v_1,v_m)$-breadth-$p$-uniform.
Observe that $G_3$ is $2$-connected, and that $G_3$ is obtained from $G$ by replacing $S$ by $S_3$ and vice versa.
Hence, Corollary~\ref{cor-surgcut} implies that $\crg(G_3)=\crg(G)$.

Consider any edge $e$ of $G_3$.  If $e\not\in E(T)$, then let $e'=e$, and otherwise let $e'$ be an arbitrary edge of $S$.
Note that $G-e'$ is obtained from $G_3-e$ by replacing $S_3-e$ by $S-e'$, and that
$S-e'$ and $S_3-e$ have the same $(v_1,v_m)$-breadth ($p-1$ if $e\in E(S_3)$ and $p$ if $e\in E(G_1)$).
By Corollary~\ref{cor-surgcut} with $G_3-e$ playing the role of $G$ and $G-e'$ playing the role of $G'$,
we conclude that $\crg(G_3-e)\le \crg(G-e')<c$.  Since this holds for
every edge $e\in E(G_3)$, it follows that $G_3$ is also $c$-crossing-critical.
\end{proof}

Ideally, we would like to follow the proof scheme of Lemma~\ref{lemma-neck}
also in the case of shelled sashes.
Unfortunately, this case is more involved, and requires additional preparatory work.
Compared to the easier case of a necklace, the important difference
in the case of a shelled band comes from the fact that the band may be drawn
not only in the ``straight way'' but also in the ``twisted way'' (recall Figure~\ref{fig:schemes}).
An indication that this is troublesome comes from the result of Hlin\v en\'y and Der\v{n}\'ar~\cite{DBLP:conf/compgeom/HlinenyD16},
who showed that determining the crossing number of a twisted planar tile is NP-complete (and in particular, this ``twisted'' crossing number is not
determined by the breadth or any other simple parameter).
Consequently, the analysis of shelled bands is significantly
more complicated than the relatively straightforward proof of
Lemma~\ref{lemma-neck}.  The same remark applies for the shelled fans.

Before we dive into technical details, let us present an informal outline of our
approach:
\begin{enumerate}
\item 
Having a very long shelled sash $\PP$ in our graph $G$, it is easy to see
that the isomorphism types of bounded-size tiles in $\PP$ must repeat.
Moreover, even bounded-length subsashes must have isomorphic repetitions.
The first idea is to shorten the sash between such repeated isomorphic
subsashes $\PP_1$ and $\PP_2$ by identifying the repeated pieces and discarding what was
between (see Definition~\ref{def:reduction} for more details).
If the repeated subsash is long enough, we can use
some rather easy connectivity properties of $\PP$ to show that this yields a
smaller graph $G_2$ of crossing number $\crg(G)$.
\item 
However, it is not clear that this reduced graph $G_2$ is $c\,$-crossing-critical.
Analogously to Lemma~\ref{lemma-neck}, for any edge $e\in E(G_2)$,
we would like to transform a drawing of $G-e$ with less than $c$ crossings to
a drawing of $G_2-e$ with less than $c$ crossings.  However, if the drawing
of $G-e$ uses some unique properties of the part $\PP_{12}$ of the sash between $\PP_1$ and
$\PP_2$, we have no way how to mimic this in the drawing of $G_2-e$.
This is especially troublesome if this part of $G-e$ is drawn in a twisted way,
since there is no simple description of what these ``unique properties'' might be
by the aforementioned NP-completeness result~\cite{DBLP:conf/compgeom/HlinenyD16}.

We overcome this difficulty by performing the described reduction only inside
``typical'' longer pieces which repeat elsewhere in the sash (see Definition~\ref{def:typical}).
Hence, in $G_1-e$ we have
many copies of $\PP_{12}$, and by an appropriate surgery, we can use one of them to
mimic the drawing of $\PP_{12}$ in $G-e$.
\item A further advantage of reducing within parts that repeat elsewhere is
that we can more explicitly describe the converse expansion operation,
as duplicating subsashes which already exist elsewhere in the (reduced) sash.
Moreover, a similar surgery argument can be used to show that
such expansions preserve the crossing number as well as the $c$-crossing-criticality.
\end{enumerate}

\subsection{Preparation for a surgery}

Before we can proceed with the execution of the outlined plan, we need two auxiliary results
that will enable us to apply Lemma~\ref{lemma-surgcut}.  The first one gives us control
over the breadth of the tiles.  Because of the possibility of a twisted drawing, we cannot argue that the shelled sashes in
crossing-critical graphs are breadth-uniform.  Instead, we show that we can always find a long sash satisfying the following
weaker edge-connectivity property.

Let $\PP$ be a sash in a plane graph and let $p$ be the
breadth of $\PP$.  We say that $\PP$ is \emph{weakly breadth-uniform} if every tile $T$ of $\PP$
has breadth exactly $p$; that is, denoting by $Q_1$ and $Q_2$ the paths of $\PP$ forming the left and the right border of $T$,
the graph $T-V(Q_1\cap Q_2)$ contains an edge cut of size $p$ separating $Q_1-V(Q_2)$ from $Q_2-V(Q_1)$.

\begin{lemma}\label{lemma-elink}
Let $w$ and $c$ be positive integers and $f:\mathbb{N}^2\to\mathbb{N}$ be an arbitrary function.
There exist integers $\ell$ and $w_0$ such that the following claim holds.
Let $G$ be a $2$-connected plane graph in which each edge has multiplicity at most $c$, and let $\PP=(P_1, \ldots, P_m)$ be a sash in $G$
with all tiles of size at most $w$.
If $m\ge \ell$, then for some $w'\le w_0$ and $k\le 3cw$, the graph $G$ contains a weakly breadth-uniform sash $\PP'$ of
length $f(k,w')$, breadth $k$, and with tiles of size at most $w'$, whose support is contained in the support of $\PP$.
Moreover, if $\PP$ is shelled, then $\PP'$ is shelled as well.
\end{lemma}
\begin{proof}
Let $k_0=3cw$.  Let $w_{k_0}=w$ and $n_{k_0}=f(k_0,w)$, and for $i=k_0-1,\ldots,0$, let $w_i=(n_{i+1}-1)w$ and $n_i=\max(n_{i+1},(f(i,w_i)-1)(n_{i+1}-1))$.
Finally, let $\ell=n_0$.
Observe that each tile of $\PP$ has at most $3cw$ edges, since a simple planar graph with at most $w$ vertices has at most $3w$ edges,
and every edge of $G$ has multiplicity at most $c$.  Consequently, each tile of $\PP$ has breadth at most $k_0$.

For $1\le i<j\le m$, let $\PP_{i,j}$ denote the subsash of $\PP$ delimited by $P_i$
and $P_j$ and let $p_{i,j}\le k_0$ be the breadth of $\PP_{i,j}$,
Note that if $i\le i' < j'\le j$, then $p_{i',j'}\ge p_{i,j}$.
Let $k$ be the largest integer such that $p_{i,j}=k$ for some indices $1\le i<j\le m$ satisfying $j\ge i+n_k-1$;
such an integer $k$ exists, since $m\ge \ell\ge n_{p_{1,m}}$.
If $k=k_0$, then the conclusion of the lemma holds for $\PP_{i,j}$ with $w'=w$; indeed, each tile of $\PP$ has breadth at most $k_0$,
and since $p_{i,j}=k_0$, each tile of $\PP_{i,j}$ must have breadth exactly $k_0$.

Suppose now that $k<k_0$.  Let $i_1=i$, and for $t=2, \ldots, f(k,w_k)$,
let $i_t$ be the minimum index greater than $i_{t-1}$ such that $p_{i_{t-1},i_t}=k$.  Note that $i_t\le i_{t-1}+n_{k+1}-1$ by the maximality
of $k$ and the fact that for $i\le i' < j'\le j$ we have $p_{i',j'}\ge p_{i,j}=k$. In particular,
$$i_{f(k,w_k)}\le i+(f(k,w_k)-1)(n_{k+1}-1)\le i+n_k\le j.$$
Consider the sash $\PP'=(P_{i_1}, P_{i_2}, \ldots, P_{i_{f(k,w_k)}})$.  The tiles of this sash have size
at most $(n_{k+1}-1)w=w_k$.  If $\PP$ is shelled, then the sash $\PP'$ is clearly shelled as well.  The choice of the paths of $\PP'$ implies that every tile of $\PP'$ has breadth exactly $k$,
and in particular the sash $\PP'$ has breadth at most $k$.  On the other hand, the breadth of $\PP'$ is
$p_{i_1, i_f(k,w_k)}\ge p_{i,j}=k$; hence, $\PP'$ is weakly breadth-uniform and its breadth is $k\le k_0=3cw$.
Therefore, $k$, $w'=w_k$, and $\PP'$ satisfy the conclusion of the lemma.
\end{proof}

Let $\PP=(P_1, \ldots, P_m)$ be a shelled sash in a $2$-connected plane graph $G$, with tiles $T_1$, \ldots, $T_{m-1}$.
A tile $T_i$ of $\PP$ is \emph{internal} if $3\le i\le m-3$; a subsash of $\PP$ is \emph{internal} if all its tiles are internal.
Suppose that $G$ is the planarization of a graph $G_0$ drawn in the plane with crossings, such that the
crossing vertices are not contained in the support of $\PP$.  
For an internal tile $T$ of $\PP$ and a subgraph $G_1$ of $G_0$ containing $T$, the \emph{$T$-bridge-skeleton}
of $G_1$ is the graph consisting of
\begin{itemize}
\item the border cycle $C(T)$ of $T$ and
\item for each $T$-bridge $B$ of $G_1$, a single vertex adjacent to the attachments of the bridge.
\end{itemize}
We say that the $T$-bridge-skeleton is \emph{planarly realizable} if it has a plane drawing where $C(T)$ bounds a face.

In the following arguments, we are going
to consider graphs derived from $G_0$ by altering the (support of) the sash $\PP$ by the reduction or expansion
operation, and possibly deleting an edge when proving crossing-criticality.  The reduction and expansion preserve
the overall structure of $G_0$ (2-connectivity, the existence of a long sash), but the edge deletion requires
a bit of care.  Let us note the following property of internal tiles after possibly deleting an edge.
\begin{observation}\label{obs-exttile-bridges}
Let $G_0$ be a $2$-connected graph drawn in the plane and let $G$ be the planarization of $G_0$.
Let $\PP$ be a shelled sash in $G$ whose support $S$ does not contain any crossing vertices,
and let $T$ be an internal tile of $\PP$. Let $G_1$ be either $G_0$ or a graph obtained from $G_0$ by deleting an edge $e$
not belonging to $T$.  Then the $T$-bridge-skeleton of $G_1$ is planarly realizable.
\end{observation}
\begin{proof}
Let $U$ be the top vertex of $\PP$ if $\PP$ is a fan and $U=\emptyset$ if $\PP$ is a band.
Let $L$ be either the first or the second path of $\PP$, chosen so that $e\not\in E(L)$.
Let $R$ be either the last or the next to last path of $\PP$, chosen so that $e\not\in E(R)$.
Let $S'$ be the plane graph obtained from $S$ by adding an edge $e'$ drawn in the bottom face of $\PP$ and joining the ends of $L$ and $R$,
and by deleting the vertices and edges of $T$ not contained in $C(T)$.  

Note that $G_1$ is connected, since $G_0$ is $2$-connected, and thus every $T$-bridge of $G_1$ has at least one attachment.
Consider any $T$-bridge $B$ of $G_1$ which is not contained in $S$ and which has an attachment $v\not\in U$.  Since $B\not\subseteq S$,
there exists a path in $B-U$ from $v$ to a vertex or edge not contained in $S$; this path necessarily intersects $L$ or $R$, and since $e\not\in E(L\cup R)$,
$L\subseteq B$ or $R\subseteq B$.

Thus, letting $B_L$ be the $T$-bridge containing $L$ and $B$ be the $T$-bridge containing $R$ (where $B_L=B_R$ is possible),
every $T$-bridge of $G_1$ other than $B_L$ and $B_R$ is either contained in $S$ or has only one attachment, contained in $U$.
Thus, the $T$-bridge-skeleton of $G_1$ is a minor of the plane graph obtained from $S'$ by possibly adding pendant vertices at the vertex of $U$,
and it follows that it is planarly realizable.
\end{proof}

Let $G_0$ be a $2$-connected graph, let $\Gamma_0$ be a drawing of $G_0$ in the plane, and let $G$ be the planarization of $\Gamma_0$.
Let $\PP$ be a shelled sash in $G$ whose support does not contain any crossing vertices and let $T$ be an internal tile of $\PP$.
Let $G_1$ be a subgraph of $G_0$ containing $T$.  We say that a drawing of $G_1$ in the plane
is \emph{$T$-flat} (relative to the fixed drawing $\Gamma_0$ of $G_0$, which will always be clear from the context)
if there are no crossings on the edges of $C(T)$, $T$ is drawn in the closed disk bounded by $C(T)$ with the drawing
homeomorphic to the one induced by $\Gamma_0$, and the rest of $G_1$ is drawn in the unbounded face of $C(T)$.
We say that two tiles of $\PP$ are \emph{non-consecutive} if they are edge-disjoint (i.e., they are disjoint when $\PP$ is a band or intersect only in the top vertex when $\PP$ is a fan).
For a set $\TT$ of non-consecutive internal tiles of $\PP$, we say that a drawing is \emph{$\TT$-flat} if it is $T$-flat for all $T\in \TT$.
The following lemma enables us to focus on $T$-flat drawings.

\begin{lemma}\label{lemma-nicedex}
Let $G_0$ be a $2$-connected graph, let $\Gamma_0$ be a drawing of $G_0$ in the plane, and let $G$ be the planarization of $\Gamma_0$.
Let $\PP$ be a shelled sash in $G$ whose support $S$ does not contain any crossing vertices,
and let $T$ be an internal tile of $\PP$. Let $G_1$ be either $G_0$ or a graph obtained from $G_0$ by deleting an edge
not belonging to $T$, and let $\Gamma_1$ be a drawing of $G_1$ in the plane with $k$ crossings.
If no edge of $C(T)$ is crossed in the drawing $\Gamma_1$, then $G_1$ has a $T$-flat drawing $\Gamma'_1$ with at most $k$ crossings.
Moreover, the drawing $\Gamma'_1$ can be chosen so that
\begin{itemize}
\item for every edge $e\in E(G_1)$, if there is no crossing on $e$ in the drawing $\Gamma_1$, then there also is no crossing on $e$ in the drawing $\Gamma'_1$, and
\item for every internal tile $T'$ of $\PP$ non-consecutive with $T$ and contained in $G_1$, if the drawing $\Gamma_1$ is $T'$-flat,
then the drawing $\Gamma'_1$ is also $T'$-flat.
\end{itemize}
\end{lemma}
\begin{proof}
Let $R$ be the $T$-bridge-skeleton of $G_1$ and fix a plane drawing of $R$ such that the open disk $\Lambda$ bounded by $C(T)$ is a face; such a drawing exists by Observation~\ref{obs-exttile-bridges}.
For each $T$-bridge $B$ of $G_1$, let $v_B$ the vertex of $R$ representing $B$, and choose a closed disk $\Delta_B$ in the plane disjoint from $\Lambda$
which contains $v_B$ and the incident edges and is otherwise disjoint from the drawing of $R$.
Moreover, these closed disks are chosen so that for distinct $T$-bridges $B_1$ and $B_2$, the disks $\Delta_{B_1}$ and $\Delta_{B_2}$ intersect
exactly in the points representing the common attachments of $B_1$ and $B_2$.

Let $\Lambda_1$ and $\Lambda_2$ be the two regions to which the cycle $C(T)$ splits the plane in the drawing $\Gamma_1$.
Consider any $T$-bridge $B$ of $G_1$, and let $R_B$ be the plane subgraph of $R$ induced by $V(C(T))\cup v_B$.  Let $e_B$ be an edge of $C(T)$
incident with the outer face of $R_B$ in the drawing induced by $\Gamma_R$.  Let $\Gamma'_B$ be the drawing of $B\cup C(T)$ induced by $\Gamma_1$,
let $\Lambda_B\in\{\Lambda_1,\Lambda_2\}$ be the region containing the drawing of $B-V(C(T))$, and let $F_B$ be the face of $B\cup C(T)$ in the drawing $\Gamma'_B$
contained in $\Lambda_B$ and incident with the edge $e_B$.
\begin{itemize}
\item If $F_B$ is the outer face of $B\cup C(T)$, then let $\Gamma''_B=\Gamma'_B$.
\item If $F_B$ is contained in the open disk bounded by $C(T)$, then let $\Gamma''_B$ be a drawing obtained from $\Gamma'_B$
by performing the circular inversion around a point in $F_B$.
\item If $F_B$ is outside the open disk bounded by $C(T)$, but not the outer face of $B\cup C(T)$, then let $\Gamma''_B$ be obtained by performing the circular inversion
around a point in $F_B$, then flipping the drawing so that $C(T)$ stays in the same orientation around its interior as in $\Gamma'_B$.
\end{itemize}
This ensures that in the drawing $\Gamma''_B$, $B-V(C(T))$ is drawn outside of the open disk bounded by $C(T)$ and that $e_B$ is incident with the outer face
of $B\cup C(T)$ in this drawing.  Moreover, consider any internal tile $T'$ of $\PP$ non-consecutive with $T$ such that $T'\subset B$ and the drawing $\Gamma_1$ is $T'$-flat.
By the $T'$-flatness, the cycle $C(T)$ is drawn outside of the open disk bounded by $C(T')$ in $\Gamma_1$, and since $E(T\cap T')=\emptyset$,
the edge $e_B$ is drawn outside of the closure of the open disk bounded by $C(T')$.  Consequently, the face $F_B$ is outside of the open disk bounded by $C(T')$
in the drawing $\Gamma'_B$, and thus the transformations used to obtain $\Gamma''_B$ do not exchange the interior and exterior of $C(T')$.
Consequently, the drawing  $\Gamma''_B$ is also $T'$-flat.

Finally, let $\Gamma_B$ be a drawing of $B\cup C(T)$ obtained by deforming $\Gamma''_B$ continuously so that the cycle $C(T)$ is drawn in exactly the same way as in the plane drawing of $R$
and so that the drawing of $B$ induced by $\Gamma_B$ is contained in $\Delta_B$; this is possible, since $B-V(C(T))$ is drawn outside of the open disk bounded by $C(T)$ and $e_B$ is incident with the outer face
of $B\cup C(T)$ in the drawing $\Gamma''_B$.

Let $\Gamma'_1$ consist of the union of the drawings $\Gamma_B$ for the $T$-bridges $B$ of $G_1$ and of a drawing of $T$ in the closed disk bounded by $C(T)$
homeomorphic to the one induced by $\Gamma_0$.  Then $\Gamma'_1$ is clearly $T$-flat. Moreover, the analysis above implies that
it is $T'$-flat for every internal tile $T'$ of $\PP$ non-consecutive with $T$ and contained in $G_1$ such that the drawing $\Gamma_1$ is $T'$-flat.
Finally, note that the construction of $\Gamma'_1$ does not introduce any crossings not present in $\Gamma_1$ (though it could possibly
eliminate some crossings between different $T$-bridges), and thus $\Gamma'_1$ has at most $k$ crossings and all edges not crossed in $\Gamma_1$
are also not crossed in $\Gamma'_1$.
\end{proof}

The last part of the conclusion of Lemma~\ref{lemma-nicedex} allows us to apply the lemma iteratively for extended-non-consecutive tiles without spoiling
the flatness with respect to the previously processed ones.  Thus, we obtain the following conclusion.
\begin{corollary}\label{cor-nicedex}
Let $G_0$ be a $2$-connected graph drawn in the plane and let $G$ be the planarization of $G_0$.
Let $\PP$ be a shelled sash in $G$ whose support does not contain any crossing vertices,
and let $\TT$ be a set of non-consecutive internal tiles of $\PP$.
Let $G_1$ be either $G_0$ or a graph obtained from $G_0$ by deleting an edge not belonging to $\bigcup \TT$.
For every positive integer $k$, if $G_1$ has a drawing in the plane with at most $k$ crossings such that no edge of $\bigcup_{T\in \TT} C(T)$ is crossed,
then it also has a $\TT$-flat one.
\end{corollary}


\subsection{Reductions in shelled sashes}

Let us now define the reduction operation in shelled sashes.
\begin{definition}[reduction]\label{def-redu}
Let $G$ be a graph drawn in the plane with crossings, let $G'$ be the planarization of $G$,
and let $\PP$ be a shelled sash in $G'$ whose support does not contain any crossing vertices,
let $P_1$ and $P_2$ be distinct paths of $\PP$ of the same length, and let $S$ be the support of the subsash of
$\PP$ delimited by $P_1$ and $P_2$.  The \emph{$(\PP,P_1,P_2)$-reduction} $G_1$ of $G$ is the graph obtained from $G$ by
removing the vertices and edges of $S$ not contained in $P_1\cup P_2$, then identifying $P_1$ with $P_2$.
We turn the drawing of $G$ to a drawing of $G_1$ with the same number of crossings in the natural way,
stretching the drawing through the face created by the removal of $S$.  The \emph{$(P_1,P_2)$-reduction} of $\PP$
is the sash in the planarization of $G_1$ obtained from $\PP$ by removing the paths strictly between $P_1$ and $P_2$
and by identifying $P_1$ with $P_2$.
\end{definition}

Unlike the necklace case, it is not in general true that a $(\PP,P_1,P_2)$-reduction preserves
the crossing-criticality, or even the crossing number---the combination of the tile preceding $P_1$ and the one following $P_2$
may contain a feature (e.g., a small cut) that does not appear in $G$ and makes the reduction easier to draw.
To avoid this issue, we perform the reduction only when sufficiently long parts of the sash preceding $P_1$ and $P_2$
are identical, in the sense of the following definitions.

\begin{definition}[isomorphic tiles and subsashes, repetitions]\label{def:reduction}
Let $\PP$ be a shelled sash in a plane graph $G'$.
Tiles $T_1$ and $T_2$ of $\PP$ are \emph{isomorphic} if there exists a
homeomorphism of the plane mapping $T_1$ to $T_2$ and the left/top/right/bottom
border of $T_1$ to the left/top/right/bottom border of $T_2$.

Subsashes $\PP_1=(P_1, \ldots, P_m)$ and $\PP_2=(P'_1,\ldots,P'_m)$
of $\PP$ are \emph{isomorphic} if for $i\in \{1,\ldots,m-1\}$,
the tile in $\PP_1$ delimited by $P_i$ and $P_{i+1}$ is isomorphic to the
tile in $\PP_2$ delimited by $P'_i$ and $P'_{i+1}$.
Equivalently, there exists a homeomorphism mapping the
support $S_1$ of $\PP_1$ to the support $S_2$ of $\PP_2$ and the paths of $\PP_1$
to the paths of $\PP_2$ in order, such that the vertices of $S_1$ incident with the bottom face $F_1$ of the sash $\PP$
are mapped to the vertices of $S_2$ also incident with the face $F_1$.

If additionally $S_1$ and $S_2$ are disjoint (except for the top vertex when $\PP$ is a fan),
the subsashes $\PP_1$ and $\PP_2$ are internal, and $\PP_2$ appears after $\PP_1$ in $\PP$, then we say that $(\PP_1,\PP_2)$
is a \emph{repetition}.  The \emph{order} of this repetition is $m$, and the \emph{span} of the repetition is the subsash
of $\PP$ between the first path of $\PP_1$ and the last path of $\PP_2$.
More generally, if $(\PP_1,\PP_2)$, $(\PP_2,\PP_3)$, \ldots, and $(\PP_{a-1},\PP_a)$ are repetitions,
then we say that $(\PP_1,\ldots,\PP_a)$ is an \emph{$a$-repetition}, whose \emph{order} and \emph{span} is defined
to be the order and the span, respectively, of the repetition $(\PP_1,\PP_a)$.

If $G'$ is the planarization of a drawing of a graph $G$ in the plane such that
the support of the sash $\PP$ does not contain any crossing vertices,
then let \emph{$(\PP,\PP_1,\PP_2)$-reduction} of $G$ mean the $(\PP,P_m,P'_m)$-reduction of $G$,
and let \emph{$(\PP_1,\PP_2)$-reduction} of $\PP$ mean the $(P_m,P'_m)$-reduction of $\PP$.
\end{definition}

When the reduction is performed on a repetition of large enough order, it is easy to argue that the crossing number
cannot decrease.

\begin{lemma}\label{lemma-reducross}
Let $\Gamma$ be a drawing of a $2$-connected graph $G$ in the plane with the minimum number $c_0$ of crossings.
Let $G'$ be the planarization of $\Gamma$ and let $\PP$ be a shelled sash in $G'$ whose support does not
contain any crossing vertices.  Let $(\PP_1,\PP_2)$ be a repetition in $\PP$ of order $m$
and let $H$ with its drawing $\Gamma_H$ be the $(\PP,\PP_1,\PP_2)$-reduction of $G$.  If $m\ge 4c_0-2$, then $\crg(H)=c_0$.
\end{lemma}
\begin{proof}
The drawing $\Gamma_H$ has exactly $c_0$ crossings, and thus $\crg(H)\le c_0$.
Suppose for a contradiction that $H$ has a drawing $\Gamma'_H$ with less than $c_0$ crossings.

Since the subsash $\PP_1$ has length at least $4c_0-2$, it contains $2c_0-1$ pairwise non-consecutive (and thus edge-disjoint) tiles.
Each crossing of $\Gamma'_H$ belongs to at most two of these tiles, and thus there exists a tile
$T$ of $\PP_1$ such that the edges of $T$ are not crossed in $\Gamma'_H$.  By Lemma~\ref{lemma-nicedex},
we can assume that the drawing $\Gamma'_H$ is $T$-flat.
Let $\PP_T$ denote the subsash of $\PP$ between (and including) $T$ and the isomorphic copy of $T$ in $\PP_2$.
We can transform $\Gamma'_H$ to a drawing of $G$ by replacing the drawing of the tile $T$ by a drawing of the support of $\PP_T$
homeomorphic to the one induced by $\Gamma$.

This creates no new crossings, giving a drawing of $G$ with less than $c_0$ crossings, which is a contradiction.
\end{proof}

It is somewhat harder to show that the reduction $G_1$ is $c$-crossing-critical, and indeed, just the presence of a repetition does not seem to be sufficient.
Of course, the basic idea would be to show that for each edge $e\in E(G_1)$, a drawing of $G-e$ with less than $c$ crossings can be turned
into a drawing of $G_1-e$ with less than $c$ crossings.  The main issue is that if the sash is drawn in $G-e$ in a ``twisted'' way with the twist
occurring in the subsash between $\PP_1$ and $\PP_2$, it is not clear how to replicate the twist in $G_1-e$.  To deal with this issue,
we are going to need an additional assumption that this whole subsash is repeated elsewhere as well.
\begin{definition}[nested repetitions]\label{def:typical}
Let $\PP$ be a shelled sash in a plane graph~$G'$.
Let $(\PP_{1,1},\PP_{1,2},\ldots,\PP_{1,a})$ be an $a$-repetition in $\PP$ of order $m$ and let $\PP_1$ be the span of this repetition.
Suppose that $(\PP_1,\ldots,\PP_b)$ is a $b$-repetition in $\PP$; thus, for $i\in\{2,\ldots,b\}$, the subsashes
$\PP_{i,1},\ldots,\PP_{i,a}$ of $\PP_i$ corresponding to $\PP_{1,1},\PP_{1,2},\ldots,\PP_{1,a}$ form an $a$-repetition.
In this situation, we say that the $b$-tuple $((\PP_{1,1},\ldots,\PP_{1,a}), \ldots, (\PP_{b,1},\ldots,\PP_{b,a})$ is a \emph{nested $(b,a)$-repetition} in $\PP$
of \emph{order} $m$.  The \emph{span} of this nested repetition is defined to be the span of the $b$-repetition $(\PP_1,\ldots,\PP_b)$.
\end{definition}

Let us give a formal definition of what we mean by a ``twist''.
Let $\Gamma_1$ and $\Gamma_2$ be drawings of two (not necessarily distinct) graphs $G_1$ and $G_2$ in the plane, and let $C_1$ and $C_2$ be cycles
appearing both in $G_1$ and $G_2$.  Suppose that $C_1$ and $C_2$ intersect in at most one vertex
(the reader should imagine $C_1$ and $C_2$ are border cycles of non-consecutive tiles of sashes) and their edges are uncrossed both in $\Gamma_1$ and $\Gamma_2$.
For $i\in \{1,2\}$, consider $C_1\cup C_2$ as the plane graph with the drawing induced by $\Gamma_i$, and let $F_i$ be its face incident with both $C_1$ and $C_2$.  Orient the cycles $C_1$ and $C_2$
so that the face $F_1$ is to the right of both $C_1$ and $C_2$ (e.g., if the open disks bounded by $C_1$ and $C_2$ in the drawing $\Gamma_1$
are disjoint, then both $C_1$ and $C_2$ are oriented counterclockwise).  With this fixed orientation, if the face $F_2$ is to the left
of one of the cycles $C_1$ and $C_2$ and to the right of the other one, we say that the drawing $\Gamma_2$ is
\emph{$(C_1,C_2)$-twisted} with respect to $\Gamma_1$; otherwise, it is \emph{$(C_1,C_2)$-straight} with respect to $\Gamma_1$.

Let us now introduce a basic operation on sashes.  For $i\in \{1,2\}$, let $G_i$ be a graph drawn in the plane with crossings,
and let $\PP_i$ be a shelled sash in the planarization of $G_i$ whose support does not contain any crossing vertices. 
Let $G'_2$ be a subgraph of $G_2$.
\begin{itemize}
\item For $i\in \{1,2\}$, let $T_{i,1}$ and $T_{i,2}$ be tiles of $\PP_i$ such that $T_{i,2}$ appears after $T_{i,1}$ in $\PP_i$
($T_{i,1}=T_{i,2}$ is possible); and moreover, the tiles $T_{1,1}$, $T_{1,2}$, $T_{2,1}$, and $T_{2,2}$ are isomorphic. 
Let $S_i$ be the support of the subsash of $\PP_i$ between the left border $Q_{i,1}$ of $T_{i,1}$ and the right border $Q_{i,2}$ of $T_{i,2}$,
and suppose that $S_2\subseteq G'_2$.  Let $G_3$ be the graph obtained from $G'_2$ by removing the vertices and edges of $S_2$ not contained in $Q_{2,1}\cup Q_{2,2}$,
adding a copy $S'_1$ of $S_1$, and identifying the copy of the path $Q_{1,j}$ in $S'_1$ with the path $Q_{2,j}$ for $j\in \{1,2\}$.
We say that $G_3$ is obtained from $G'_2$ by the \emph{$((\PP_2,T_{2,1},T_{2,2})\to (\PP_1,T_{1,1},T_{1,2}))$-transplantation}.
\item In case that $T_{2,1}=T_{2,2}$, we say that $G_3$ is obtained from $G'_2$ by \emph{$(T_{2,1}\to (\PP_1,T_{1,1},T_{1,2}))$-transplantation}.
We refer to this special case as a \emph{single-tile transplantation}.
\end{itemize}
\begin{observation}\label{obs-prescrossing}
For $i\in \{1,2\}$, let $G_i$ be a graph, let $\Gamma_i$ be a drawing of $G_i$ in the plane with crossings,
and let $\PP_i$ be a shelled sash in the planarization of $\Gamma_i$ whose support does not contain any crossing vertices. 
For $i,j\in\{1,2\}$, let $T_{i,j}$ be a tile of $\PP_i$, where $T_{i,2}$ appears after $T_{i,1}$, and all four tiles are isomorphic.
For $i\in\{1,2\}$, let $S_i$ be the support of the subsash of $\PP_i$ between the left border $Q_{i,1}$ of $T_{i,1}$ and the right border $Q_{i,2}$ of $T_{i,2}$.
Let $G'_2$ be a subgraph of $G_2$ containing $S_2$ and let $G_3$ be the graph obtained from $G'_2$ by the $((\PP_2,T_{2,1},T_{2,2})\to (\PP_1,T_{1,1},T_{1,2}))$-transplantation.
Let $\TT$ be a set of pairwise non-consecutive tiles of $\PP_2$ contained in $G'_2$ but not in $S_2$ and non-consecutive to $T_{2,1}$ and $T_{2,2}$.
Let $\Gamma$ be a $(\TT\cup\{T_{2,1},T_{2,2}\})$-flat drawing of $G'_2$ in the plane with $c$ crossings.  Suppose that either
\begin{itemize}
\item $T_{2,1}=T_{2,2}$, or
\item the tiles $T_{2,1}$ and $T_{2,2}$ are non-consecutive, the drawing $\Gamma$ is $(C(T_{2,1}),C(T_{2,2}))$-straight with respect to $\Gamma_2$,
and $\PP_2$ is weakly breadth-uniform.
\end{itemize}
Then $G_3$ has a $\TT$-flat drawing in the plane with at most $c$ crossings.
\end{observation}
\begin{proof}
If $T_{2,1}=T_{2,2}$, then we can simply draw $S_1$ in the face of $G'_2$ created by the removal of the tile $T_{2,1}=T_{2,2}$, with the plane drawing induced by $\Gamma_1$.
Hence, suppose that $T_{2,1}\neq T_{2,2}$.

Let $p$ be the breadth of $\PP_2$; clearly, the $(Q_{2,1},Q_{2,2})$-breadth of $S_2$ is at least $p$.  Moreover, since $\PP_2$ is weakly breath-uniform
and the tile $T_{1,1}$ of $S_1$ is isomorphic to the tile $T_{2,2}$ of $S_2$, we conclude that the $(Q_{1,1},Q_{1,2})$-breadth of $S_1$ is at most~$p$.
Let $K$ be the subgraph of $G'_2$ obtained by deleting the vertices and edges of $S_2$ not contained in $Q_{2,1}\cup Q_{2,2}$,
and let $\Gamma_K$ be the drawing of $K$ induced by $\Gamma$.  Let us orient the path $Q_{2,1}$ away from the bottom face of the sash $\PP_2$ and the path $Q_{2,2}$ towards it.
Since the drawing $\Gamma$ is $(C(T_{2,1}),C(T_{2,2}))$-straight, we can assume (by flipping the drawing if necessary) that with this orientation, the paths $Q_{2,1}$ and $Q_{2,2}$ are 
right-exposed in the drawing $\Gamma_K$. Moreover, since the drawing $\Gamma$ is $\{T_{2,1},T_{2,2}\}$-flat, each edge of $S_2-E(Q_{2,1}\cup E(Q_{2,2}))$ incident with a vertex of $(Q_{2,1}\cup Q_{2,2})-V(Q_{2,1}\cap Q_{2,2})$
starts in the respective right window.  The claim then follows from Lemma~\ref{lemma-surgcut}.
\end{proof}
The fact that the drawing of $G_3$ is $\TT$-flat means that after a transplantation, we can further perform
an additional single-tile transplantation for each tile $T\in \TT$ (formally, for these additional transplantations,
the graph $G_2$ and the sash $\PP_2$ are replaced by the graph and sash arising from them by applying the preceding transplantations).

Having introduced these operations, let us make a brief side remark.  As we have seen in the introduction,
there are band-like constructions of crossing-critical graphs only depending on straight drawings: Deletion of an edge from the band
decreases its breadth, making it possible to draw another part of the graph across it in order to reduce the number of crossings.
However, for the fans, the possibility to twist the drawing is essential.
\begin{observation}\label{obs-fan-twisted}
Let $G$ be a $c$-crossing-critical graph and let $G'$ be the planarization of an optimal drawing $\Gamma_1$ of $G$ in the plane.
Let $\PP$ be a shelled weakly breadth-uniform fan in $G'$ whose support does not contain any crossing vertices.
Let $u$ be the top vertex of $\PP$, let $T_1$ and $T_2$ be non-consecutive isomorphic internal tiles appearing in $\PP$ in order,
let $P\in \PP$ be a path appearing in $\PP$ between the right border of $T_1$ and the left border of $T_2$,
and let $e$ be the edge of $P$ incident with $u$.  Let $\Gamma$ be a drawing of $G-e$ with
$k<c$ crossings.  If the drawing $\Gamma$ is $\{T_1,T_2\}$-flat, then it is $(C(T_1),C(T_2))$-twisted.
\end{observation}
\begin{proof}
Let $\Gamma_2$ be the drawing of $G-e$ induced by $\Gamma_1$.  Let $\PP_2=\PP\setminus\{P\}$ and note that $\PP_2$
is a shelled weakly breadth-uniform fan in the planarization of $\Gamma_2$; the weak breadth-uniformity follows from the assumption
that $e$ is incident with $u$.  The graph $G$ is obtained from $G-e$ by the $((\PP_2,T_1,T_2)\to (\PP,T_1,T_2))$-transplantation.
Thus, if the drawing $\Gamma$ were $(C(T_1),C(T_2))$-straight, Observation~\ref{obs-prescrossing} would imply that $\crg(G)\le k<c$,
contradicting the assumption that $G$ is $c$-crossing-critical.
\end{proof}

We are now ready to state the key lemma used to prove criticality of a reduction within a nested repetition of sufficiently large order
(the statement includes a variant without edge removal which will be useful later).

\begin{lemma}\label{lemma-redurem}
Let $G$ be a $2$-connected graph, let $\Gamma_G$ be a drawing of $G$ in the plane with crossings, let $G'$ be the planarization of $\Gamma_G$,
and let $\PP$ be a weakly breadth-uniform shelled sash in $G'$ whose support does not contain any crossing vertices.
Let $((\PP_{1,1},\PP_{1,2}), (\PP_{2,1},\PP_{2,2}))$ be a nested $(2,2)$-repetition in $\PP$ of order $m$ for an integer $m\ge 2$,
and let $S$ be the support of the span of this nested repetition.
Let us fix an integer $m'\in\{1,\ldots,m-1\}$ and for $i,j\in \{1,2\}$, let $T_{i,j}$ be the $m'$-th tile of $\PP_{i,j}$.
Let $H$ be the $(\PP,\PP_{2,1},\PP_{2,2})$-reduction of $G$.
Let $G_0$ and $H_0$ be either $G$ and $H$, or $G-e$ and $H-e$ for an edge $e\in E(G)\setminus E(S)$.
If $G_0$ has a $\{T_{1,1},T_{1,2},T_{2,1}, T_{2,2}\}$-flat drawing $\Gamma$ with $c_0$ crossings, then $\crg(H_0)\le c_0$.
\end{lemma}
\begin{proof}
Let $\Gamma_H$ be the drawing of the $(\PP,\PP_{2,1},\PP_{2,2})$-reduction $H$ naturally obtained from $\Gamma_G$, let $\PP_H$ be the $(\PP_{2,1},\PP_{2,2})$-reduction of $\PP$,
and let $T$ be the tile of $\PP_H$ corresponding to $T_{2,1}$ and $T_{2,2}$.

If the drawing $\Gamma$ is $(C(T_{i,1}),C(T_{2,2}))$-straight with respect to $\Gamma_G$ for some $i\in\{1,2\}$,
then note that $H_0$ is obtained from $G_0$ by the $((\PP,T_{i,1},T_{2,2})\to (\PP_H, T_{i,1}, T))$-transplantation,
and thus $\crg(H_0)\le c_0$ by Observation~\ref{obs-prescrossing}.

\begin{figure}
\begin{center}
\includegraphics[width=0.8\textwidth]{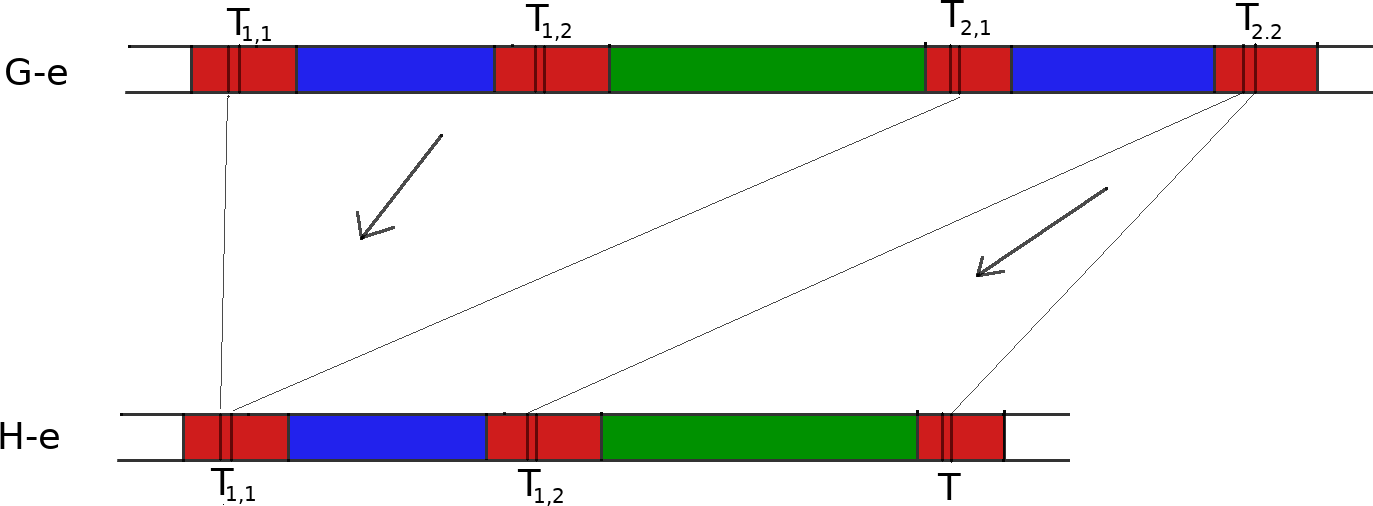}
\end{center}
\caption{An illustration of the double surgery from the proof of Lemma~\ref{lemma-redurem}.}\label{fig-crit-surgery}
\end{figure}

Hence, we can assume that $\Gamma$ is $(C(T_{1,1}),C(T_{2,2}))$- and $(C(T_{2,1}),C(T_{2,2}))$-twisted with respect to $\Gamma_G$.
Consequently, $\Gamma$ is $(C(T_{1,1}),C(T_{2,1}))$-straight with respect to $\Gamma$.
In this case, we perform two transplantations to turn $G_0$ into $H_0$, taking advantage of the nested repetition;
see Figure~\ref{fig-crit-surgery} for an illustration. First, we perform the $((\PP,T_{1,1},T_{2,1})\to (\PP_H, T_{1,1}, T_{1,1}))$-transplantation,
then the single-tile $(T_{2,2}\to (\PP_H, T_{1,2}, T))$-transplantation.  Again, $\crg(H_0)\le c_0$ follows by Observation~\ref{obs-prescrossing}.
\end{proof}

With Lemma~\ref{lemma-redurem}, we can easily conclude that a $(\PP,\PP_1,\PP_2)$-reduction preserves the crossing-criticality,
as long as the subsash between $\PP_1$ and $\PP_2$ (inclusive) has sufficiently many repetitions.

\begin{corollary}\label{cor-reducrit}
Let $G$ be a $2$-connected $c\,$-crossing-critical graph and let $\Gamma_G$ be a drawing of $G$ in the plane
with the minimum number of crossings.
Let $G'$ be the planarization of $G$, let $\PP$ be a weakly breadth-uniform shelled sash in $G'$ whose support
does not contain any crossing vertices, and let $k$ be the breadth of $\PP$.  For an integer $m$, let $((\PP_{1,1},\PP_{1,2}), (\PP_{2,1},\PP_{2,2}), (\PP_{3,1},\PP_{3,2}))$
be a nested $(3,2)$-repetition in $\PP$ of order $m$.
If $m\ge 8\fnlab{thm-crit-cno}(c)$, then the $(\PP,\PP_{2,1},\PP_{2,2})$-reduction $H$ of $G$ is $c$-crossing-critical.
\end{corollary}
\begin{proof}
Let $c_0$ be the number of crossings in the drawing $\Gamma_G$; by Theorem~\ref{thm-crit-cno}, we have $c_0\le \fnlab{thm-crit-cno}(c)$,
and thus $m\ge 4c_0-2$.  By Lemma~\ref{lemma-reducross}, we have $\crg(H)=c_0$.

Let $\PP_H$ be the $(\PP_{2,1},\PP_{2,2})$-reduction of the sash $\PP$, and let $\PP'_2$ be the subsash of $\PP_H$
corresponding to $\PP_{2,1}$ (and $\PP_{2,2}$).  Let us now consider any edge $e\in E(H)$.  
Observe that there exists a subsash $\QQ'_2$ of $\PP'_2$ of length $4c_0$ whose support
does not contain~$e$.  By symmetry, we can assume that if $e$ is contained in the support of $\PP_H$, then
it appears after the subsash $\QQ'_2$; otherwise, we can flip the drawings of $G$ and $H$ and consider
the reversed sashes.  For $i,j\in\{1,2\}$, let $\QQ_{i,j}$ be the subsash of $\PP_{i,j}$ corresponding to $\QQ'_2$.
Hence, $((\QQ_{1,1},\QQ_{2,1}), (\QQ_{2,1},\QQ_{2,2})$ is a nested $(2,2)$-repetition in $\PP$ of order $4c_0$;
let $S$ be the support of its span.  Observe that $H$ is also a $(\PP,\QQ_{2,1},\QQ_{2,2})$-reduction of $G$.
From this perspective $e$ corresponds to a unique edge of $G$, which we also denote by $e$, and this edge is
not contained in $S$.

To show that $H$ is crossing-critical, it suffices to argue that $\crg(H-e)\le \crg(G-e)$.
Let $\Gamma$ be a drawing of $G-e$ with $\crg(G-e)<c$ crossings.
Since each crossing belongs to at most two non-consecutive tiles,
observe that there exists $m'\in \{1,3,5,\ldots,4c-3\}$ such that for every $i,j\in\{1,2\}$, no edge of the $m'$-th tile $T_{i,j}$
of $\QQ_{i,j}$ is crossed in the drawing $\Gamma$.
By Corollary~\ref{cor-nicedex}, we can assume that the drawing $\Gamma$ is $\{T_{1,1},T_{1,2},T_{2,1}, T_{2,2}\}$-flat.
Therefore, $\crg(H-e)\le \crg(G-e)<c$ by Lemma~\ref{lemma-redurem}.
\end{proof}

\subsection{Expansions in shelled sashes}

Next, let us consider a converse operation to reduction.

\begin{definition}[expansion]\label{def-expand}
Let $G$ be a graph, let $\Gamma$ be a drawing of $G$ in the plane with crossings, let $G'$ be the planarization of $\Gamma$,
and let $\PP$ be a shelled sash in $G'$ whose support does not contain any crossing vertices.
Let $(\PP_1,\PP_2,\PP_3)$ be a 3-repetition in $\PP$.  Let $S$ be the support of the subsash $\PP_{1,2}$ of $\PP$
between the first path of $\PP_1$ and the last path of $\PP_2$.  Let $Q_1$ and $Q_2$ be the
first and the last path of $\PP_3$.  The \emph{$(\PP,\PP_1,\PP_2,\PP_3)$-expansion}
of $G$ is the graph $M$ obtained from $G$ by removing the vertices and edges of the support of $\PP_3$ not contained
in $Q_1\cup Q_2$, adding a copy $S'$ of $S$, and identifying the two border paths of this copy of $S$ with
$Q_1$ and $Q_2$.  The drawing of $G$ is naturally turned into a drawing of $M$, by drawing $S$ in the
face previously occupied by the support of $\PP_3$ with a drawing homeomorphic to the one induced by $\Gamma$.
The \emph{$(\PP_1,\PP_2,\PP_3)$-expansion} of $\PP$ is the sash $\PP_M$ in the planarization of $M$
obtained from $\PP$ by replacing $\PP_3$ by the copy of $\PP_{1,2}$ contained in $S'$.
Let $\PP'_1$ and $\PP'_2$ be the copies of $\PP_1$ and $\PP_2$ in $S'$; then
$((\PP_1,\PP_2),(\PP'_1,\PP'_2))$ is a nested $(2,2)$-repetition in $\PP_M$, which we call the \emph{site}
of the $(\PP,\PP_1,\PP_2,\PP_3)$-expansion.  Let us remark that $G$ is the $(\PP_M,\PP'_1,\PP'_2)$-reduction of $M$.
\end{definition}
First, let us note that expansion does not affect the breadth of breath-uniform sashes.
\begin{lemma}\label{lemma-presbreadth-expand}
Let $G$ be a graph, let $G'$ be the planarization of a drawing~of~$G$ in the plane with crossings,
let $\PP$ be a weakly breadth-uniform sash in $G'$ whose support does not contain any crossing vertices,
and let $k$ be the breadth~of~$\PP$.  Let $(\PP_1,\PP_2,\PP_3)$ be a 3-repetition in $\PP$ of order at least $k$.
Let $M$ be the $(\PP,\PP_1,\PP_2,\PP_3)$-expansion of $G$ and let $\PP_M$ be the $(\PP_1,\PP_2,\PP_3)$-expansion of $\PP$.
Then $\PP_M$ is weakly breadth-uniform and its breadth is $k$.
\end{lemma}
\begin{proof}
Let $((\PP_1,\PP_2),(\PP'_1,\PP'_2))$ be the site of the expansion, let $R_{1,1}$ and $R_{1,2}$ be the first and the last path of $\PP'_1$,
and let $R_{2,1}$ and $R_{2,2}$ be the first and the last path of $\PP'_2$.

Each tile of $\PP_M$ is isomorphic to a tile of $\PP$, and thus it has breadth exactly~$k$.  Hence, it suffices to show
that $\PP_M$ has breadth at least $k$.  Let $Q_1$ and $Q_2$ be the first and the last path of $\PP_M$ and let $S$ be the support of $\PP_M$.
Suppose for a contradiction that there exists a set $Z\subseteq E(S)$ of size less than $k$ separating $Q_1-V(Q_2)$ from $Q_2-V(Q_1)$ in $S-V(Q_1\cap Q_2)$.

The subsash of $\PP_M$ between $Q_1$ and $R_{1,2}$ is isomorphic to the subsash of $\PP$ between $Q_1$ and the last path of $\PP_3$,
and thus it has breadth $k$.  Hence, $S-V(Q_1\cap Q_2)-Z$ contains a path $K_1$ from $Q_1-V(Q_2)$ to $R_{1,2}-V(Q_1\cap Q_2)$.
Similarly, $S-V(Q_1\cap Q_2)-Z$ contains a path $K_2$ from $Q_2-V(Q_1)$ to $R_{2,1}-V(Q_1\cap Q_2)$.  Moreover, the subsash of $\PP_M$ between $R_{1,1}$ and $R_{2,2}$
is isomorphic to the subsash of $\PP$ between the first path of $\PP_1$ and the last path of $\PP_2$,
and thus it has breath $k$.  Consequently, $S-V(Q_1\cap Q_2)-Z$ contains a path $K$ from $R_{1,1}$ to $R_{2,2}$.
Finally, since $\PP'_1$ and $\PP'_2$ have length at least $k$, for $i\in \{1,2\}$, there exists a path $K'_i$ of $\PP'_i$
disjoint from $Z$.  Thus, $K_1\cup (K'_1-V(Q_1\cap Q_2))\cup K\cup (K'_2-V(Q_1\cap Q_2))\cup K_2$ is a connected subgraph of $S-V(Q_1\cap Q_2)-Z$
intersecting both $Q_1-V(Q_2)$ and $Q_2-V(Q_1)$; this is a contradiction.
\end{proof}

Lemma~\ref{lemma-redurem} (with $M$ playing the role of $G$) now easily implies that the expansion operation does not decrease crossing number.
\begin{lemma}\label{lemma-expand-crg}
Let $G$ be a graph, let $G'$ be the planarization of an optimal drawing of $G$ in the plane with $c_0=\crg(G)$ crossings,
let $\PP$ be a weakly breadth-uniform sash in $G'$ whose support does not contain any crossing vertices,
and let $k$ be the breadth of $\PP$.  Let $(\PP_1,\PP_2,\PP_3)$ be a 3-repetition in $\PP$ of order at least $\max(k, 4c_0)$ and let
$M$ be the $(\PP,\PP_1,\PP_2,\PP_3)$-expansion of $G$.  Then $\crg(M)=c_0$.
\end{lemma}
\begin{proof}
Let $((\PP_1,\PP_2),(\PP'_1,\PP'_2))$ be the site of the expansion and let $\PP_M$ be the $(\PP_1,\PP_2,\PP_3)$-expansion of $\PP$.

Clearly $\crg(M)\le c_0$.  Suppose for a contradiction that $\crg(M)<c_0$, and thus there exists a drawing $\Gamma_M$ of $M$ in the plane with less than $c_0$ crossings.
Then there exists $m'\in \{1,3,\ldots,4c_0-3\}$ such that the edges of the $m'$-th tile of $\PP_1$, $\PP_2$, $\PP'_1$, and $\PP'_2$ are not crossed in $\Gamma_M$.
Let $T_1$, $T_2$, $T'_1$, and $T'_2$ denote these tiles.  By Corollary~\ref{cor-nicedex}, we can assume that $\Gamma_M$ is $\{T_1,T_2,T'_1,T'_2\}$-flat.
Moreover, the sash $\PP_M$ is weakly breadth-uniform by Lemma~\ref{lemma-presbreadth-expand}.
Since $G$ is the $(\PP_M,\PP'_1,\PP'_2)$-reduction of $M$, Lemma~\ref{lemma-redurem} implies that $\crg(G)<c_0$; this is a contradiction.
\end{proof}

The argument showing that the expansion operation preserves criticality is more involved, requiring us to use different transplantations depending
on the placement of a twist.  Let us start with an analogue of Lemma~\ref{lemma-redurem}.
\begin{figure}
\begin{center}
\includegraphics[width=\textwidth]{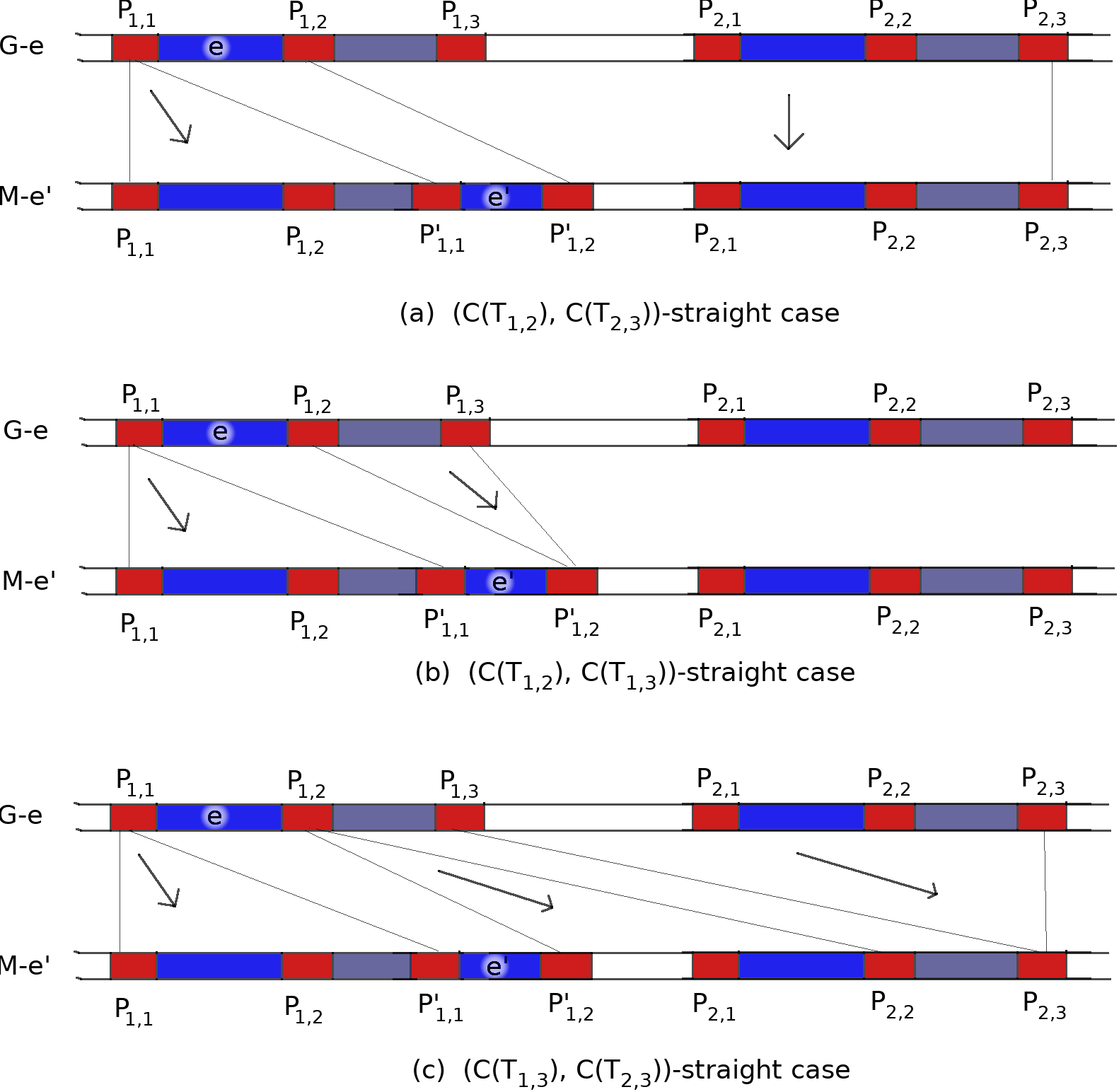}
\end{center}
\caption{An illustration of the surgeries from the proof of Lemma~\ref{lemma-exprem}.}\label{fig-exprem}
\end{figure}

\begin{lemma}\label{lemma-exprem}
Let $G$ be a $2$-connected graph and let $G'$ be the planarization of a drawing $\Gamma_G$ of $G$ in the plane with crossings.
Let $\PP$ be a weakly breadth-uniform shelled sash in $G'$ whose support does not contain any crossing vertices.
Let $((\PP_{1,1},\PP_{1,2},\PP_{1,3}), (\PP_{2,1},\PP_{2,2},\PP_{2,3}))$ be a nested $(2,3)$-repetition in $\PP$ of order $m$ for an integer $m\ge 2$.
Let us fix an integer $m'\in\{1,\ldots,m-1\}$ and for $i\in \{1,2\}$ and $j\in \{1,2,3\}$, let $T_{i,j}$ be the $m'$-th tile of $\PP_{i,j}$.
Let $M$ be the $(\PP,\PP_{1,1},\PP_{1,2},\PP_{1,3})$-expansion of $G$ with the drawing $\Gamma_M$ of $M$ naturally arising from the drawing of $G$,
let $\PP_M$ be the $(\PP_{1,1},\PP_{1,2},\PP_{1,3})$-expansion of $\PP$,
and let $((\PP_{1,1},\PP_{1,2}),(\PP'_{1,1},\PP'_{1,2}))$ be the site of the expansion.
Let $S$ be the support of the subsash of $\PP$ between the last path of $\PP_{1,1}$ and the first path of $\PP_{1,2}$,
and let $S'$ be the support of the isomorphic subsash of $\PP_M$ between the last path of $\PP'_{1,1}$ and the first path of $\PP'_{1,2}$.
Let $e'$ be an edge of $S'$ and let $e$ be the corresponding edge of $S$.
If $G-e$ has a $\{T_{i,j}:i\in\{1,2\},j\in\{1,2,3\}\}$-flat drawing $\Gamma$ with less than $c$ crossings, then $\crg(M-e')<c$.
\end{lemma}
\begin{proof}
Let $T'_{1,1}$ and $T'_{1,2}$ be the $m'$-th tiles of $\PP'_{1,1}$ and $\PP'_{1,2}$, respectively.
Let us distinguish three cases; in each of them, the described transplantations are followed by the single-tile $(T_{1,1}\to (\PP_M,T_{1,1}, T'_{1,1}))$-transplantation,
and the final results are sketched in Figure~\ref{fig-exprem}.
\begin{itemize}
\item If $\Gamma$ is $(C(T_{1,2}), C(T_{2,3}))$-straight, then we perform the $((\PP,T_{1,2},T_{2,3})\to (\PP_M, T'_{1,2}, T_{2,3})$-transplantation.
See Figure~\ref{fig-exprem}(a) for an illustration.
\item If $\Gamma$ is $(C(T_{1,2}), C(T_{1,3}))$-straight, then we perform the $((\PP,T_{1,2},T_{1,3})\to (\PP_M, T'_{1,2}, T'_{1,2})$-transplantation.
See Figure~\ref{fig-exprem}(b) for an illustration.
\item If $\Gamma$ is $(C(T_{1,2}), C(T_{2,3}))$-twisted and $(C(T_{1,2}), C(T_{1,3}))$-twisted, then it is $(C(T_{1,3},C(T_{2,3})))$-straight.
We perform the $((\PP,T_{1,3},T_{2,3})\to (\PP_M, T_{2,3}, T_{2,3})$-transplantation, and the single-tile  $(T_{1,2}\to (\PP_M, T'_{1,2}, T_{2,2}))$-transplantation.
See Figure~\ref{fig-exprem}(b) for an illustration.
\end{itemize}
Using the assumed drawings $\Gamma$ and $\Gamma_M$ with the transplantations,
in all three cases, we obtain a drawing of a graph isomorphic to $M-e'$ as the result of the transplantations, 
and we conclude that $\crg(M-e')<c$ by Observation~\ref{obs-prescrossing}.
\end{proof}

We can now argue about the crossing-criticality of the expansions.
\begin{corollary}\label{cor-expcrit}
Let $G$ be a $2$-connected $c\,$-crossing-critical graph and let $G'$ be the planarization of a drawing of $G$ in the plane with the smallest number $c_0$ of crossings.
Let $\PP$ be a weakly breadth-uniform shelled sash in $G'$ whose support does not contain any crossing vertices and let $k$ be the breadth of $\PP$.
For an integer $m$, let $((\PP_{1,1},\PP_{1,2},\PP_{1,3}), (\PP_{2,1},\PP_{2,2},\PP_{2,3})$ be a nested $(2,3)$-repetition in $\PP$ of order $m$.
Let $M$ be the $(\PP,\PP_{1,1},\PP_{1,2},\PP_{1,3})$-expansion of $G$.  If $m\ge \max(k,8\fnlab{thm-crit-cno}(c))$, then $M$ is $c\,$-crossing-critical.
\end{corollary}
\begin{proof}
By Theorem~\ref{thm-crit-cno}, we have $c_0\le \fnlab{thm-crit-cno}(c)$, and thus $m\ge 4c_0$.  By Lemma~\ref{lemma-expand-crg}, we have $\crg(M)=c_0$.
Hence, we just need to argue that $\crg(M-e')<c$ for every edge $e'\in E(M)$.
Let $\PP_M$ be the $(\PP_{1,1},\PP_{1,2},\PP_{1,3})$-expansion of $\PP$, and let $((\PP_{1,1},\PP_{1,2}),(\PP'_{1,1},\PP'_{1,2}))$ be the site of the expansion.
By Lemma~\ref{lemma-presbreadth-expand}, $\PP_M$ is weakly breadth-uniform and is breadth is $k$.  

There exists $s\in \{1,4c+1\}$ such that, letting $\QQ'_{1,j}$ for $j\in \{1,2\}$ denote the subsash of $\PP'_{1,j}$ of length $4c$ starting with the $s$-th path of $\PP'_{1,j}$,
the edge $e'$ is contained neither in the support of $\QQ'_{1,1}$ nor in the support of $\QQ'_{1,2}$.  For $i\in\{1,2\}$ and $j\in\{1,2,3\}$, let $\QQ_{i,j}$ be
the subsash of $\PP_{i,j}$ of length $4c$ starting with the $s$-th path of $\PP_{i,j}$.  Note that $M$ is the $(\PP,\QQ_{1,1},\QQ_{1,2},\QQ_{1,3})$-expansion~of~$G$.
Let $S$ be the support of the subsash of $\PP$ between the last path of $\QQ_{1,1}$ and the first path of $\QQ_{1,2}$,
and let $S'$ be the support of the isomorphic subsash of $\PP_M$ between the last path of $\QQ'_{1,1}$ and the first path of $\QQ'_{1,2}$.

If $e'\not\in E(S')$, then let $e$ be the corresponding edge of $G$ and consider a drawing $\Gamma_1$ of $G-e$ with less than $c$ crossings.
There exists a tile $T$ of $\QQ_{1,3}$ whose edges are not crossed in $\Gamma_1$, and by Lemma~\ref{lemma-nicedex}, we can assume that the drawing $\Gamma_1$ is $T$-flat.
Let $T_1$ and $T_2$ be the tiles of $\QQ'_{1,1}$ and $\QQ'_{1,2}$, respectively, corresponding to $T$.
Note that $M-e'$ is obtained from $G-e$ by the single-tile $(T\to (\PP_M,T_1,T_2))$-expansion, and thus $\crg(M-e')<c$ by Observation~\ref{obs-prescrossing}.

Hence, suppose that $e'\in E(S')$.  In this case, let $e$ be the corresponding edge of $S$, and consider a drawing $\Gamma_2$ of $G-e$ with less than $c$ crossings.
There exists $m'\in\{1,3,\ldots, 4c-3\}$ such that for $i\in \{1,2\}$ and $j\in \{1,2,3\}$, the $m'$-th tile $T_{i,j}$ of $\QQ'_{i,j}$ is not crossed in $\Gamma_2$.
By Corollary~\ref{cor-nicedex}, we can assume that the drawing $\Gamma_2$ is $\{T_{i,j}:i\in\{1,2\},j\in\{1,2,3\}\}$-flat.  By Lemma~\ref{lemma-exprem},
we have $\crg(M-e')<c$.
\end{proof}

\section{The structural theorem}\label{sec:main}

Lemma~\ref{lemma-neck} and Corollary~\ref{cor-expcrit} motivate the following definition.

\begin{definition}[nothing-new expansion]\label{def:expansion}
Let $G$ be a $2$-connected $c\,$-crossing-critical graph.
Let $\Gamma$ be a drawing of $G$ in the plane with the minimum number of crossings,
let $G'$ be the planarization of $\Gamma$ and let $\PP$ be a sash in $G'$ whose support does not contain any crossing vertices.
\begin{itemize}
\item Suppose $\PP$ is a necklace, $T$ is a tile of $\PP$, $M$ is a $\PP$-expansion of $G$ obtained by replacing
a vertex of $\PP$ by the copy of $T$, and $\Gamma_M$ is the drawing of $M$ naturally arising from $\Gamma$.
Then we say that $(M,\Gamma_M)$ is a \emph{nothing-new expansion of $(G,\Gamma)$}.
\item Suppose that $\PP$ is shelled and weakly breadth-uniform and let $k$ be the breadth of $\PP$.
Moreover, suppose that $((\PP_{1,1},\PP_{1,2},\PP_{1,3}), (\PP_{2,1},\PP_{2,2},\PP_{2,3}))$ is a nested $(2,3)$-repetition in $\PP$ of order
$\max(k,8\fnlab{thm-crit-cno}(c))$.  Let $M$ be the $(\PP,\PP_{1,1},\PP_{1,2},\PP_{1,3})$-expansion of $G$
and let $\Gamma_M$ be the drawing of $M$ naturally arising from $\Gamma$.  In this case, we also say that
$(M,\Gamma_M)$ is a \emph{nothing-new expansion of $(G,\Gamma)$}.
\end{itemize}
In both cases, the \emph{magnitude} of the nothing-new expansion is defined as the number of vertices of the support of $\PP$.
\end{definition}

By the pigeonhole principle, it is easy to see that a suitable nested repetition can be found in any sufficiently long sash.
More precisely, note the following observation.
\begin{observation}\label{obs-repet}
There exists a function $f_{\ref{obs-repet}}:\mathbb{N}^4\to \mathbb{N}$ such that the following claim holds.
Let $G$ be a $2$-connected graph with maximum edge multiplicity at most $c$ and let $G'$ be the planarization of a drawing of $G$ in the plane with crossings.
Let $\PP$ be a sash in $G'$ and let $w$ be the maximum number of vertices of a tile of $\PP$.  For all positive integers $a$ and $m$,
if the length of $\PP$ is at least $f_{\ref{obs-repet}}(a,m,w,c)$, then $\PP$ contains an $a$-repetition of order $m$.
\end{observation}
\begin{proof}
Let $q(w,c)$ be the number of isomorphism classes of tiles with at most $w$ vertices and with maximum edge multiplicity at most $c$.
Then, the number of isomorphism classes of subsashes of $\PP$ of length $m$ is $q'(w,c,m)=q^{m-1}(w,c)$.  Let us set $f_{\ref{obs-repet}}(a,m,w,c)=4+a\cdot q'(w,c,m)$.
If the length of $\PP$ is at least $f_{\ref{obs-repet}}(a,m,w,c)$, then $\PP$ contains $a\cdot q'(w,c,m)$ pairwise disjoint internal subsashes of length $m$.
By the pigeonhole principle, at least $a$ of them are pairwise isomorphic, and thus they form an $a$-repetition of order $m$.
\end{proof}

By iterating the observation, we get the desired result.
\begin{corollary}\label{cor-repet}
There exists a function $f_{\ref{cor-repet}}:\mathbb{N}^3\to \mathbb{N}$ such that the following claim holds.
Let $G$ be a $2$-connected graph with maximum edge multiplicity at most $c$ and let $G'$ be the planarization of a drawing of $G$ in the plane with crossings.
Let $\PP$ be a sash in $G'$ and let $w$ be the maximum number of vertices of a tile of~$\PP$.  For a positive integer $m$,
if the length of $\PP$ is at least $f_{\ref{cor-repet}}(m,w,c)$, then there exists a repetition $(\PP_1,\PP_2)$ in $\PP$
such that $\PP_1$ (and thus also $\PP_2)$ contains a nested $(2,2)$-repetition of order $m$.
\end{corollary}
\begin{proof}
Let $f_0(m,w,c)=m$ and for $i\ge 1$, let $f_i(m,w,c)=f_{\ref{obs-repet}}(2,f_{i-1}(m,w,c),w,c)$.
Finally, let us define $f_{\ref{cor-repet}}(m,w,c)=f_3(m,w,c)$.

If the length of $\PP$ is at least $f_{\ref{cor-repet}}(m,w,c)$, then by Observation~\ref{obs-repet},
it contains a repetition $(\PP_1,\PP_2)$ of order $f_2(m,w,c)$.  Applying Observation~\ref{obs-repet} to $\PP_1$,
we conclude that it contains a $2$-repetition $(\PP_{1,1},\PP_{1,2})$ of order $f_{\ref{obs-repet}}(a,m,w,c)$.  Applying Observation~\ref{obs-repet} one more time,
we obtain a $2$-repetition of order $m$ in $\PP_{1,1}$.  This $2$-repetition together with the corresponding $2$-repetition
in $\PP_{1,2}$ forms a nested $(2,2)$-repetition of order $m$ in $\PP_1$.
\end{proof}

We are now ready to state our main result.

\begin{theorem}
\label{thm-nn}\label{thm:mainexpansion}%
For all positive integers $c\le c_0$, there exists a positive integer~$n$ such that the following holds.
Let $G$ be a $2$-connected graph and let $\Gamma$ be a drawing of $G$ in the plane.  Then
the following claims are equivalent:
\begin{enumerate}[(i)]
\item\label{it:is_crit} The graph $G$ is $(c,c_0)\,$-crossing-critical and the drawing $\Gamma$ has $c_0$ crossings.
\item\label{it:is_constr} There exists a sequence $(G_0,\Gamma_0)$, \ldots, $(G_l,\Gamma_l)$
such that
\begin{itemize}
\item $G_0$ is a $2$-connected $(c,c_0)$-crossing-critical graph with at most $n$ vertices and $\Gamma_0$ is a drawing
of $G_0$ in the plane with $c_0$ crossings,
\item for $i\in\{1, \ldots, l\}$, $(G_i,\Gamma_i)$ is a nothing-new expansion of $(G_{i-1},\Gamma_{i-1})$ of magnitude at most $n$, and
\item $G=G_l$ and $\Gamma=\Gamma_l$.
\end{itemize}
\end{enumerate}
\end{theorem}
\begin{proof}
Let $m(k)=\max(k,8\fnlab{thm-crit-cno}(c))$.
Let $f(k,w')=f_{\ref{cor-repet}}(m(k),w',c)$ and for every positive integer $w$, let $\ell(w)$ be the value of $\ell$ from Lemma~\ref{lemma-elink} for $c$, $w$, and the function $f$.
Let $g(w)=\max(f_{\ref{obs-repet}}(2,2,w,c),\ell(w))$
and let $w_0$ and $n_0$ be the corresponding values from Corollary~\ref{cor-tiles}.  Finally, let $n=\max(n_0,g(w_0)w_0)$.

To prove that \ref{it:is_constr} implies \ref{it:is_crit}, we need to show by induction that for $i=0,\ldots, l$, the graph
$G_i$ is $2$-connected and $(c,c_0)\,$-crossing-critical and its drawing $\Gamma_i$ has $c_0$ crossings.
For $i=0$, this holds by the assumptions.  Suppose that $i\ge 1$.  The drawing $\Gamma_{i-1}$
has the same number $c_0$ of crossings as its nothing-new expansion $\Gamma_i$.  Moreover,
$\crg(G_i)=\crg(G_{i-1})=c_0$ and $G_i$ is $c$-crossing-critical by Lemma~\ref{lemma-neck}
or by Lemma~\ref{lemma-expand-crg} and Corollary~\ref{cor-expcrit}.

Let us now consider the implication from \ref{it:is_crit} to \ref{it:is_constr}.
We prove the claim by induction on the number of vertices of $G$.
If $|V(G)|\le n$, then \ref{it:is_constr} holds with $l=0$, $G=G_0$, and $\Gamma_0=\Gamma$.

Hence, suppose that $|V(G)|>n\ge n_0$.  By Corollary~\ref{cor-tiles}, there exists
a sash $\PP$ in the planarization of $\Gamma$ whose support does not contain any crossing vertices,
$\PP$ is either a necklace or shelled, and for some integer $w_1\le w_0$, all the tiles of $\PP$ have size at most $w_1$
and the length of $\PP$ is exactly $g(w_1)$.  In particular, the support $S$ of $\PP$ has at most $g(w_1)w_1\le g(w_0)w_0\le n$ vertices.

Suppose first that $\PP$ is a necklace.  By Observation~\ref{obs-multip}, every edge of $G$ has multiplicity at most $c$.
Since $g(w_1)\ge f_{\ref{obs-repet}}(2,2,w_1,c)$, the necklace $\PP$ contains a $2$-repetition $(\PP_1,\PP_2)$ of order $2$.
Let $H$ be the $\PP$-reduction of $G$ obtained by contracting the tile forming $\PP_2$, and let $\Gamma_H$ be the drawing of $G'$
obtained from $\Gamma$ in the natural way.  Then $(G,\Gamma)$ is a nothing-new expansion of $(H,\Gamma_H)$
of magnitude at most $|V(S)|\le n$.  Moreover, $H$ is $(c,c_0)$-crossing-critical by Lemma~\ref{lemma-neck},
and thus \ref{it:is_constr} follows by the induction hypothesis applied to $H$ and $\Gamma_H$.

Therefore, we can assume that $\PP$ is shelled.  Since the length of $\PP$ is $g(w_1)\ge \ell(w_1)$, Lemma~\ref{lemma-elink}
implies that for some integers $w'$ and $k$, the graph $G'$ contains a weakly breadth-uniform shelled sash $\PP'$ of
length $f(k,w')$, breadth~$k$, and with tiles of size at most $w'$, whose support $S'$ is contained in the support $S$ of $\PP$.
Since $f(k,w')=f_{\ref{cor-repet}}(m(k),w',c)$, there exists a repetition $(\PP_1,\PP_2)$ in the sash $\PP'$
such that $\PP_1$ contains a nested $(2,2)$-repetition $((\PP_{1,1,1},\PP_{1,1,2}),(\PP_{1,2,1},\PP_{1,2,2}))$ of order $m(k)$.
Let $((\PP_{2,1,1},\PP_{2,1,2}),(\PP_{2,2,1},\PP_{2,2,2}))$ be the corresponding nested $(2,2)$-repetition in $\PP_2$.
Let $H$ with drawing $\Gamma_H$ be the $(\PP',\PP_{1,2,1},\PP_{1,2,2})$-reduction of $G$ with drawing $\Gamma$,
and let $\PP_H$ be the $(\PP_{1,2,1},\PP_{1,2,2})$-reduction of $\PP'$.  Let $\PP'_{1,2,1}$ be the subsash of $\PP_H$
corresponding to $\PP_{1,2,1}$ (and $\PP_{1,2,2}$). Then $((\PP_{1,1,1},\PP_{1,1,2},\PP'_{1,2,1}),(\PP_{2,1,1},\PP_{2,1,2},\PP_{2,2,1}))$
is a nested $(2,3)$-repetition of order $m(k)$ in $\PP_H$, and $G$ with the drawing $\Gamma$ is obtained
from $H$ with the drawing $\Gamma_H$ by the $(\PP_H,\PP_{1,1,1},\PP_{1,1,2},\PP_{1,2,1})$-expansion.
We conclude that $(G,\Gamma)$ is a nothing-new expansion of $(H,\Gamma_H)$ of magnitude at most $|V(S')|\le |V(S)|\le n$.
Moreover, $H$ is $(c,c_0)$-crossing-critical by Lemma~\ref{lemma-reducross} and Corollary~\ref{cor-reducrit}.
Therefore, \ref{it:is_constr} follows by the induction hypothesis applied to $H$ and $\Gamma_H$.
\end{proof}

\section{Enumeration of crossing-critical graphs}\label{sec:alg}

Naturally, for any fixed positive integer $c$, Theorem~\ref{thm:mainexpansion} gives an efficient enumeration procedure
to generate all $2$-connected $c\,$-crossing-critical graphs (or $(c,c_0)$-crossing-critical graphs for any $c_0\ge c$) of at most a given order~$n$.
By straightforwardly following the statement, we could generate each such graph with all of its (non-homeomorphic) optimal drawings. This may not be
desirable, since there can exist many optimal drawings for each graph.  Let us now describe how this can be avoided, i.e., how to
only generate non-isomorphic $(c,c_0)$-crossing-critical graphs with an output-sensitive complexity.

We say that a graph $H$ is an \emph{$a$-alteration} of a graph $G$ if $H$ is obtained from~$G$ by removing at most $a$ vertices and incident edges, then adding
at most $a$ new vertices and at most $a$ edges.  We say that the $a$-alteration is \emph{special} if
\begin{itemize}
\item the removed vertices of $G$ have degree at most $a$ and induce a connected subgraph of $G$, and
\item each end of an added edge is either a new vertex or a neighbor of a removed vertex.
\end{itemize}
\begin{corollary}\label{cor:mainexpansion}
For all positive integers $c\le c_0$, there exists a positive integer~$a$ such that the following holds.
Let $G$ be a $2$-connected graph.  If $G$ is $(c,c_0)\,$-crossing-critical,
then there exists a sequence $G_0$, \ldots, $G_l=G$ of $2$-connected $(c,c_0)$-crossing-critical graphs
such that $|V(G_0)|\le a$ and for $i\in \{1, \ldots, l\}$, the graph $G_i$ is a special $a$-alteration of $G_{i-1}$
and $|V(G_i)|>|V(G_{i-1}|$.
\end{corollary}
\begin{proof}
Let $n$ be the value from Theorem~\ref{thm:mainexpansion} and let $a=3cn$.
We let $G_0$, \ldots, $G_l$ be the graphs from Theorem~\ref{thm:mainexpansion}\ref{it:is_constr}.
Note that for $i\in\{1,\ldots,l\}$, the graph $G_i$ is obtained from $G_{i-1}$ by picking a planar subgraph $S$
with at most $n$ vertices (the support of the sash in which we perform the nothing-new expansion), deleting a connected subgraph
formed by vertices of $S$ that have no neighbors outside of $S$, and adding a new planar subgraph $R$ with at most $n$ vertices, some of which are identified with the
neighbors of the deleted vertices.  Since $G_i$ is $c$-crossing-critical,
$R$ has maximum edge multiplicity at most $c$ by Observation~\ref{obs-multip}, and thus $|E(R)|\le 3cn=a$.  Therefore, $G_i$ is a special $a$-alteration of $G_{i-1}$.
\end{proof}

Let us remark that the number of connected subgraphs $K$ of $G$ induced by at most $a$ vertices of $G$ of degree (in~$G$) at most $a$
is $O_a(|V(G)|)$, since after fixing a vertex $v_0$ of $K$, every other vertex of $K$ is reachable from $v_0$ by a path of
at most $a$ vertices of $G$ of degree at most $a$, and the number of such paths is bounded by a function of $a$. 
Moreover, the number of neighbors of $K$ outside of $K$ is at most $a^2$.
Hence, the following claim holds.
\begin{observation}\label{obs-fewalt}
Let $a$ be a fixed positive integer.  Then every graph $G$ has $O_a(|V(G)|)$ special $a$-alterations.
\end{observation}

With some (likely substantial) additional effort, it should be possible to strengthen Corollary~\ref{cor:mainexpansion} by adding further constraints on the admissible
alterations so that they preserve crossing-criticality.  However, this is not needed for the enumeration:
As we have seen in Theorem~\ref{thm:bounded-pw}, crossing-critical graphs have bounded path-width, and thus
also bounded tree-width.  Hence, we can use the meta-algorithmic theory for graphs of bounded tree-width
to efficiently generate just the special $a$-alterations that are $(c,c_0)$-crossing-critical.

More precisely, for every fixed integer $k$, it is well known (e.g.~\cite{grohe2001computing}) that there exists a Monadic Second-Order Logic (MSOL) sentence $\varphi_{\le\!k}$
such that $G\models \varphi_{\le\!k}$ iff $\crg(G)\le k$.  Briefly, this sentence expresses the fact that it is possible to
at most $2k$-times subdivide an edge of $G$ and identify pairs of the arising vertices of degree two into vertices of degree four
so that the resulting graph $G'$ is planar ($G'$ contains neither $K_5$ nor $K_{3,3}$ as a minor); i.e., we guess the planarization $G'$
of a drawing of $G$ in the plane with at most $k$ crossings.  Since some edges of $G$ may need to be subdivided multiple times, writing
out the sentence $\varphi_k$ explicitly is not straightforward; however, it is rather easy to show its existence through the theory of MSOL transductions~\cite{transductions}.

Using the sentence $\varphi_{\le\!k}$, it is also possible to write a formula $\varphi^-_{\le\!k}$ with one free edge-valued variable
such that $G,e\models \varphi^-_{\le\!k}$ iff $\crg(G-e)\le k$.
Thus, there also exists a MSOL sentence $\varphi_{(c,c_0)}$ expressing that the graph is $(c,c_0)\,$-crossing-critical,
defined as
$$\varphi_{\le c_0}\land\lnot \varphi_{\le c_0-1}\land(\forall e\in E)\,\varphi^-_{\le c-1}(e).$$
Additionally, we can test 2-connectivity in MSOL; let $\psi_{(c,c_0)}$ be the sentence expressing that the graph is $2$-connected and $(c,c_0)$-crossing-critical.

Consider any fixed graph $R$, a fixed finite sequence $r_1$, \ldots, $r_t$ of its vertices, and positive integers $m$ and $d$.
Using, e.g., the theory of MSOL transductions, one can easily see that any MSOL sentence $\varphi$
can be turned into a MSOL formula $\varphi^{(R,m,d,r_1,\ldots,r_t)}$ with $m+t$ free vertex variables
such that $$G,x_1, \ldots,x_m, y_1,\ldots, y_t\models \varphi^{(R,m,d,r_1,\ldots,r_t)}$$
iff the subgraph $G[\{x_1,\ldots,x_m\}]$ is connected, each of the vertices $x_1$, \ldots, $x_m$ has degree
at most $d$ in $G$, the vertices $y_1$, \ldots, $y_t$ are not contained in $\{x_1,\ldots,x_m\}$
but each of them has a neighbor in this set, and the graph $G'$ obtained from $G-\{x_1,\ldots,x_m\}$ by 
adding the graph $R$ and the edges $r_1y_1$, \ldots, $r_ty_t$ satisfies $\varphi$.  Note that 
for any $a\ge \max (m,d,|V(R)|, |E(R)|+t)$, the graph $G'$ is a special $a$-alteration of $G$;
and if $m < |V(R)|$, then $|V(G')|>|V(G)|$.

If $G$ is a graph of tree-width bounded by a fixed integer $b$, we can use the meta-algorithmic theory
of Courcelle~\cite{courcelle} to efficiently enumerate all choices of $x_1, \ldots,x_m$, $y_1,\ldots, y_t\in V(G)$
such that $G,x_1, \ldots,x_m, y_1,\ldots, y_t\models\psi^{(R,m,d,r_1\ldots,r_t)}_{(c,c_0)}$.
By performing this for $d=a$, all graphs $R$ and all finite sequences $r_1,\ldots,r_t\in V(R)$ such that
$a\ge \max (|V(R)|, |E(R)|+t)$, and all non-negative integers $m<|V(R)|$,
we obtain the following conclusion.
\begin{lemma}\label{lemma-enum}
Let $a$, $b$, and $c\le c_0$ be positive integers.  There exists an algorithm that,
given a graph $G$ of tree-width at most $b$ and edge multiplicity at most $c$,
enumerates all $2$-connected $(c,c_0)$-crossing-critical
special $a$-alterations $G'$ of $G$ such that $|V(G')|>|V(G)|$, in time $O_{a,b,c_0}(|V(G)|+s)$,
where $s$ is the number of such alterations $G'$.
\end{lemma}
Note that to achieve this complexity, the algorithm should return just descriptions
of the alterations (the sets of removed and added vertices and added edges) rather than the $a$-alterations themselves.
By Observation~\ref{obs-fewalt}, and further neglecting the fixed parameters $a$, $b$ and $c_0$,
the time complexity of this algorithm is actually $O(|V(G)|)$, or $O(|V(G)|^2)$ if we need to construct the $a$-alterations.

Let us fix positive integers $c\le c_0$ and let $a$ be the value from Corollary~\ref{cor:mainexpansion}.
Furthermore, let $b=\fnlab{thm:bounded-pw}(c)$, so that all $c$-crossing-critical graphs have tree-width at most $b$.
Consider the auxiliary directed graph $\vec{S}$ whose vertices are all non-isomorphic $2$-connected
$(c,c_0)\,$-crossing-critical graphs with at most $n$ vertices,
and $(G,G')$ is an edge of $\vec{S}$ iff $|V(G')|>|V(G)|$ and $G'$ is a special $a$-alteration
of $G$.  By Corollary~\ref{cor:mainexpansion}, all vertices of this graph are reachable
from the vertices corresponding to the graphs with at most $a$ vertices.
Furthermore, by Observation~\ref{obs-fewalt} each vertex of $G$ of $\vec{S}$
has $O(|V(G)|)$ outneighbors, and according to Lemma~\ref{lemma-enum} and the comments following it, these outneighbors
can be enumerated in time $O(|V(G)|^2)$.

Hence, we can enumerate all $2$-connected $(c,c_0)\,$-crossing-critical graphs
by searching this auxiliary graph $\vec{S}$.  Note that we need to ensure that we do not process
each graph more than once.  One way to do it is as follows:  When we process an edge $(G,G')$
of $\vec{S}$, we compute an automorphism-invariant canonical code $c(G')$ for $G'$; since $G'$ has bounded
treewidth, this can be done in time $O(|V(G')|)=O(V(G))$ as shown in~\cite{canonical}.
We keep a hash table of such canonical codes of already processed graphs, and we only process $G'$
if $c(G')$ is not in this table.  Since $G$ has $O(|V(G)|)$ outneighbors, the total overhead for all outneighbors
of $G$ is $O(|V(G)|^2)$, i.e., on the same order as the time needed to enumerate the outneighbors.
For a somewhat more involved approach that avoids the need to store the canonical codes of the generated
graphs (and thus reduces the space complexity to $O(n^2)$), see~\cite{mckay1998isomorph}.

Note that the total size $s$ of the output (the list of all $2$-connected $(c,c_0)$-crossing-critical graphs with at most $n$
vertices) is $\sum_{G\in V(\vec{S})} |V(G)|$, and thus the time complexity of the search is
$$O\left(\sum_{G\in V(\vec{S})} |V(G)|^2\right)=O(sn).$$
Hence, we obtain the following result.

\begin{theorem}\label{thm-gen}
For any positive integers $c\le c_0$, there exists an algorithm that, given a positive integer $n$,
lists all non-isomorphic $2$-connected $(c,c_0)$-crossing-critical graphs with at most $n$ vertices
in time $O(sn)$, where $s$ is the total size of the output.
\end{theorem}

Lastly, we remark that one could also avoid using the aforementioned Courcelle's meta-algorithmic result
(and thus make the procedure look ``more concrete''), for instance, 
by using the very recent crossing-number algorithm by Lokshtanov et al.~\cite{FPTcr-SODA25}
applied to the list enumerating all special $a$-alterations $G_0$ of a graph $G$ as in Lemma~\ref{lemma-enum}.
However, although the algorithm of \cite{FPTcr-SODA25} runs in linear time for fixed $c$ and $c_0$, testing
whether a candidate graph $G_0$ is $(c,c_0)$-crossing-critical requires linear number of calls to the algorithm
(plus one call for every edge of $G_0$) and this would altogether rise the time complexity by a multiplicative factor of~$O(n)$.

\section{Conclusion}
\label{sec:conclusion}

To summarize, we have shown a structural characterization and an enumeration
procedure for all $2$-connected $c\,$-crossing-critical graphs, using
bounded-size replication steps over an implicit finite set of basic
$c\,$-crossing-critical graphs.
The characterization can be used to describe all $c\,$-crossing-critical graphs
(without the connectivity assumption), as discussed in Section~\ref{ssec:2con}

With this characterization and other tools from this paper at hand,
one can expect a significant progress in the crossing number research, both
from mathematical and algorithmic perspectives.
For example, the tools can be used to quite easily derive that for every positive integer $c$, there are only
finitely many $3$-regular $c\,$-cros\-sing-critical graphs, a claim that has been so far proved only
via the Graph minors theorem of Robertson and Seymour.

One can similarly hope for a progress on some of the long-standing open questions
in the area of crossing-critical graphs, though this may be hampered by the lack
of information on the basic graphs.  For example, while it seems at first plausible that
it can lead to asymptotic improvements to the bound of Theorem~\ref{thm-crit-cno},
we can notice that the largest crossing number can (and indeed, must) occur already
among the basic graphs, since the crossing number is not affected by the expansions.
Some ideas (such as the existence of long sashes in crossing-critical graphs)
could still be of use in this context, though.

More immediately, we suspect that using our structural results, it should be
possible to address another long-standing open question
\cite{richter1993minimal}, whether for a fixed
integer $c$, the number of $5$-regular $c\,$-crossing-critical graphs is bounded
(or whether an infinite family of such graphs exists).

\bibliography{cycles,extra}


\end{document}